\theoremstyle:=definition,remark,plain\do{%
 \expandafter\g@addto@macro\csname th@\theoremstyle\endcsname{%
 \addtolength\thm@preskip\parskip
 }%
 }
\declaretheorem[name=Theorem,numberwithin=section]{thm}
\declaretheorem[name=Proposition,numberlike=thm]{prop}
\declaretheorem[name=Lemma,numberlike=thm]{lemma}
\declaretheorem[name=Corollary,numberlike=thm]{cor}
\declaretheorem[name=Definition,style=definition,qed=$\blacktriangle$,numberlike=thm]{defn}
\declaretheorem[name=Example,style=definition,qed=$\blacktriangle$,numberlike=thm]{ex}
\declaretheorem[name=Remark,style=definition,qed=$\blacktriangle$,numberlike=thm]{rmk}
\newcounter{noteCounter}
\DeclareMathOperator\Div{div}
\DeclareMathOperator\tr{tr}
\DeclareMathOperator{\im}{im}
\DeclareMathOperator{\rank}{rank}
\newcommand{\eps}{\varepsilon}
\newcommand{\spi}{\slashed S}
\newcommand{\vol}{\mathsf{vol}}
\newcommand{\cT}{\mathcal T}
\newcommand{\cV}{\mathcal V}
\newcommand{\real}{\operatorname{\mathrm{Re}}}
\newcommand{\G}{\mathrm{G}_2}
\newcommand{\Spin}[1]{\mathrm{Spin}(#1)}
\newcommand{\N}{\mathbb N}
\newcommand{\R}{\mathbb R}
\newcommand{\C}{\mathbb C}
\newcommand{\PR}{\mathbb P}
\newcommand{\ph}{\varphi}
\newcommand{\ps}{\psi}
\newcommand{\Ph}{\Phi}
\newcommand{\hk}{\mathbin{\! \hbox{\vrule height0.3pt width5pt depth 0.2pt \vrule height5pt width0.4pt depth 0.2pt}}}
\newcommand{\rest}[2]{{#1}|_{#2}}
\newcommand{\wt}{\widetilde}
\begin{document}

\title{A special class of $k$-harmonic maps \\ inducing calibrated fibrations}

\author{Anton Iliashenko \\ \emph{Department of Pure Mathematics, University of Waterloo} \\ \tt{ailiashenko@uwaterloo.ca} \and Spiro Karigiannis \\ \emph{Department of Pure Mathematics, University of Waterloo} \\ \tt{karigiannis@uwaterloo.ca}}

\maketitle

\begin{abstract}
We consider two special classes of $k$-harmonic maps between Riemannian manifolds which are related to calibrated geometry, satisfying a first order fully nonlinear PDE. The first is a special type of weakly conformal map $u \colon (L^k, g) \to (M^n, h)$ where $k \leq n$ and $\alpha$ is a calibration $k$-form on $M$. Away from the critical set, the image is an $\alpha$-calibrated submanifold of $M$. These were previously studied by Cheng--Karigiannis--Madnick when $\alpha$ was associated to a vector cross product, but we clarify that such a restriction is unnecessary. The second, which is new, is a special type of weakly horizontally conformal map $u \colon (M^n, h) \to (L^k, g)$ where $n \geq k$ and $\alpha$ is a calibration $(n-k)$-form on $M$. Away from the critical set, the fibres $u^{-1} \{ u(x) \}$ are $\alpha$-calibrated submanifolds of $M$.

We also review some previously established analytic results for the first class; we exhibit some explicit noncompact examples of the second class, where $(M, h)$ are the Bryant--Salamon manifolds with exceptional holonomy; we remark on the relevance of this new PDE to the Strominger--Yau--Zaslow conjecture for mirror symmetry in terms of special Lagrangian fibrations and to the $\G$ version by Gukov--Yau--Zaslow in terms of coassociative fibrations; and we present several open questions for future study.
\end{abstract}

\tableofcontents

\section{Introduction} \label{sec:intro}

The natural partial differential equations which arise in Riemannian geometry are usually second order. Some important examples are:
\begin{enumerate}[(i)] \setlength\itemsep{-1mm}
\item an Einstein metric [$\mathrm{Ric}_g = \lambda g$, where $\mathrm{Ric}$ is the Ricci curvature]
\item a minimal submanifold [$H = 0$, where $H$ is the mean curvature]
\item a Yang--Mills connection $\nabla$ on a vector bundle [$(d^{\nabla})^* F^{\nabla} = 0$, where $F^{\nabla}$ is the curvature]
\item a $k$-harmonic map $u \colon (M_1, g_1) \to (M_2, g_2)$ between Riemannian manifolds [$\Div (|du|^{k-2} du) = 0$]
\end{enumerate}
All of the above geometric objects are also \emph{variational}. That is, the PDEs are Euler--Lagrange equations for some natural geometric functional or ``energy'', and hence such objects are \emph{critical points} of these functionals, but may not in general be (local) minima.

A common feature is that when there is additional geometric structure present, one can identify a natural \emph{special class of solutions} which:
\begin{itemize} \setlength\itemsep{-1mm}
\item satisfy a (usually fully nonlinear) \emph{first order} PDE, and
\item are actually global minimizers of the functional within a particular class of variations.
\end{itemize}
With respect to the particular examples above, these special first order solutions are:
\begin{enumerate}[(i)] \setlength\itemsep{-1mm}
\item a \emph{special holonomy metric}: Calabi--Yau, hyperk\"ahler, quaternionic-K\"ahler, $\G$, or $\Spin{7}$. These are all Einstein, and most are Ricci-flat. [The condition of special holonomy is first order on the metric in each case, but there does not seem to be any unified way of describing these, and it is unknown if they are global minimizers of the Einstein--Hilbert functional within some particular class of variations.]
\item a \emph{calibrated submanifold} of a special holonomy manifold. These are all minimal. The calibrated condition is a first order condition on the immersion. They are global minimizers of the volume functional in a given homology class.
\item an \emph{instanton} on a vector bundle over a special holonomy manifold. These are all Yang--Mills. The instanton condition is a first order condition on the connection, being an algebraic condition on the curvature. In many cases, a characteristic class argument shows that they are global minimizers of the Yang--Mills energy.
\end{enumerate}
Note that all the special first order solutions in (i), (ii), and (iii) described above are related to Riemannian manifolds with special holonomy. [This is not necessary. Classical self-dual and anti-self-dual instantons are special Yang--Mills connections on a Riemannian $4$-manifold, with no special holonomy.]

In this paper, we discuss two classes of special first order solutions to (iv) above, called \emph{Smith maps}. They are special types of $k$-harmonic maps $u \colon (M_1, g_1) \to (M_2, g_2)$ between pairs of Riemannian manifolds, which are intimately related to both \emph{calibrated geometry} and \emph{conformal geometry}:
\begin{itemize}
\item For $u \colon (L^k, g) \to (M^n, h)$, with $k \leq n$ and $\alpha \in \Omega^k (M)$ a closed calibration, we define a \emph{Smith immersion}, which is a special type of weakly conformal $k$-harmonic map. If $L^0$ is the open subset on which $du \neq 0$, then $u \colon L^0 \to M$ is an immersion, whose image $u(L^0)$ is $k$-dimensional $\alpha$-calibrated submanifold of $(M, h)$. Moreover, the notion of Smith immersion is invariant under \emph{conformal change} of the domain metric $g$. Conversely, if $u \colon (L^k, g) \to (M^n, h)$ is a weakly conformal $k$-harmonic map such that $u(L^0)$ is $\alpha$-calibrated, then $u$ is a Smith immersion. (Theorem~\ref{thm:immersion-ineq}.)
\item For $u \colon (M^n, h) \to (L^k, g)$, with $n \geq k$ and $\alpha \in \Omega^{n-k}(M)$ a closed calibration, we define a \emph{Smith submersion}, which is a special type of weakly horizontally conformal $k$-harmonic map. If $M^0$ is the open subset on which $du \neq 0$, then the fibres $u^{-1} \{ u(x) \}$ of $u \colon M^0 \to L$ are $(n-k)$-dimensional $\alpha$-calibrated submanifolds of $(M, h)$. Moreover, the notion of Smith submersion is invariant under \emph{horizontally conformal change} of the domain metric $h$. Conversely, if $u \colon (M^n, h) \to (L^k, g)$ is a weakly horizontally conformal $k$-harmonic map such that the fibres of $u|_{M^0}$ are $\alpha$-calibrated, then $u$ is a Smith submersion. (Theorem~\ref{thm:submersion-ineq}.)
\end{itemize}
The notion of Smith immersions was previously studied by Cheng--Karigiannis--Madnick in~\cite{CKM1} and~\cite[Section 3.3]{CKM2}, inspired by an unpublished preprint of Smith~\cite{Smith}. We review it here, and clarify that it extends from calibrations associated to vector cross products to any calibrations. (This was implicit in~\cite[Section 3.3]{CKM2}.) The notion of Smith submersions is \emph{new} in the present paper.

In each case, we establish a fundamental pointwise inequality in Theorems~\ref{thm:immersion-ineq} and~\ref{thm:submersion-ineq}, respectively, which itself is obtained by combining the fundamental inequality of calibrated geometry and the Hadamard inequality. We then use these pointwise inequalities, together with the assumption that $d \alpha = 0$, to prove the associated integral \emph{energy inequalities} in Theorems~\ref{thm:Smith-immersion-energy-ineq} and~\ref{thm:Smith-submersion-energy-ineq}, respectively, when the domain is compact. This immediately yields the $k$-harmonicity of such maps. We also give direct proofs of $k$-harmonicity by differentiating the Smith equations, which also explicitly show the importance of the $d \alpha = 0$ assumption.

The two constructions should also be viewed as special first order versions of the following particular classical results~\cite[(3.5) and (3.10)]{Urakawa} from harmonic map theory:
\begin{itemize} \setlength\itemsep{-1mm}
\item a Riemannian immersion $u \colon (L, g) \to (M, h)$ is harmonic $\iff$ the image is minimal,
\item a Riemannian submersion $u \colon (M, h) \to (L, g)$ is harmonic $\iff$ the fibres are minimal.
\end{itemize}

In the final section, we briefly discuss the analytic results for Smith immersions which were established in~\cite{CKM1}, discuss several explicit examples of Smith submersions with noncompact domains, comment on the relevance of Smith submersions to the SYZ and GYZ ``conjectures'' involving special Lagrangian and coassociative fibrations, and collect several open questions for future study.

\subsubsection*{Conventions and notation.} All manifolds are \emph{oriented} Riemannian manifolds, though not necessarily compact. As usual a superscript on a manifold such as $M^n$ means $\dim M = n$. All maps between manifolds are smooth.

We often use the Riemannian metric (via the musical isomorphism) to identify vector fields and $1$-forms, and more generally tensors of mixed type with covariant tensors. We use $\cT^m$ for the space of smooth $m$-tensors (that is, smooth sections of the $m^{\text{th}}$ tensor power of the cotangent bundle), we use $\Omega^p$ for the space of $p$-forms, $\star$ for the Hodge star operator, and $\vol$ for the Riemannian volume form. We use $\nabla$ for the Levi-Civita connection. We write $\Div \colon \cT^m \to \cT^{m-1}$ for the Riemannian \emph{divergence}, given in terms of a local orthonormal frame by $(\Div A)_{j_1 \cdots j_{m-1}} = \nabla_i A_{i j_1 \cdots j_{m-1}}$. (We sum over repeated indices.)

For us, a \emph{calibration} $\alpha$ is a comass one differential form, not necessarily closed. (Some authors call this a semi-calibration or pre-calibration.) When $\alpha$ is also closed, we call it a \emph{closed calibration}.

The following result is a version of \emph{Hadamard's inequality} that we use frequently.
\begin{prop}[Hadamard's inequality] \label{prop:Hadamard}
Let $A \colon (V_1^{n_1}, g_1) \to (V_2^{n_2}, g_2)$ be a linear map between real inner product spaces where $n_k = \dim V_k$. Then $| \Lambda^{n_1} A | \leq \frac{1}{(\sqrt{n_1})^{n_1}} |A|^{n_1}$ with equality if and only if $A^* g_2 = \lambda^2 g_1$ with $\lambda^2 = \frac{1}{n_1} |A|^2$.
\end{prop}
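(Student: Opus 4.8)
The plan is to diagonalize $A$ via its singular value decomposition and then recognize the desired estimate as the arithmetic--geometric mean inequality. First I would invoke the SVD: since $A \colon V_1 \to V_2$ is linear between finite-dimensional inner product spaces, there exist a $g_1$-orthonormal basis $\{e_1, \dots, e_{n_1}\}$ of $V_1$, a $g_2$-orthonormal basis $\{f_1, \dots, f_{n_2}\}$ of $V_2$, and singular values $\sigma_1 \geq \cdots \geq \sigma_{n_1} \geq 0$ such that $A e_i = \sigma_i f_i$ for $1 \le i \le n_1$ (where $\sigma_i = 0$ once $i > \rank A$; in particular at least $n_1 - n_2$ of the $\sigma_i$ vanish when $n_1 > n_2$). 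Equivalently, the $\sigma_i^2$ are the eigenvalues of the self-adjoint positive semidefinite operator $A^{\sT} A$ on $V_1$, where $A^{\sT}$ is the adjoint of $A$ with respect to $g_1$ and $g_2$.

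In these bases both quantities become completely explicit. The Hilbert--Schmidt norm is $|A|^2 = \tr(A^{\sT} A) = \sum_{i=1}^{n_1} \sigma_i^2$. Since $e_1 \wedge \cdots \wedge e_{n_1}$ is a unit generator of the line $\Lambda^{n_1} V_1$ and $(\Lambda^{n_1} A)(e_1 \wedge \cdots \wedge e_{n_1}) = (A e_1) \wedge \cdots \wedge (A e_{n_1}) = \big( \prod_{i=1}^{n_1} \sigma_i \big)\, f_1 \wedge \cdots \wedge f_{n_1}$ with the $f_i$ orthonormal, we get $|\Lambda^{n_1} A| = \prod_{i=1}^{n_1} \sigma_i$. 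Writing $a_i = \sigma_i^2 \geq 0$ and raising both sides of the claimed inequality to the power $2/n_1$, the assertion $\prod_i \sigma_i \leq (\sqrt{n_1})^{-n_1} \big( \sum_i \sigma_i^2 \big)^{n_1/2}$ is seen to be exactly $\big( \prod_{i=1}^{n_1} a_i \big)^{1/n_1} \leq \frac{1}{n_1} \sum_{i=1}^{n_1} a_i$, which is AM--GM for the nonnegative numbers $a_i$. (An alternative route combines the classical Hadamard determinant inequality $|\Lambda^{n_1}A| \le \prod_i |Ae_i|$ with AM--GM applied to the $|Ae_i|^2$, but the SVD makes the sharp equality analysis cleaner, so I would use it.)

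It then remains only to track the equality case, the one step warranting care. Equality in AM--GM holds if and only if all $a_i = \sigma_i^2$ coincide, with common value $\lambda^2$, which says precisely that $A^{\sT} A = \lambda^2 \Id_{V_1}$; since the pullback metric satisfies $(A^* g_2)(u,v) = g_2(Au, Av) = g_1(A^{\sT} A u, v)$, this is equivalent to $A^* g_2 = \lambda^2 g_1$. Summing $\sigma_i^2 = \lambda^2$ over the $n_1$ indices gives $|A|^2 = n_1 \lambda^2$, hence $\lambda^2 = \frac{1}{n_1}|A|^2$, matching the stated normalization. I expect the main thing to watch is the degenerate regime $n_1 > n_2$: there $A$ must have nontrivial kernel, so $\prod_i \sigma_i = 0$ and the inequality is automatic, while equality forces $\lambda^2 = 0$ and thus $A = 0$, consistent with the characterization. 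As every step other than the single invocation of AM--GM is an identity, no genuine obstacle arises; the proof is a faithful repackaging of AM--GM through the singular values of $A$.
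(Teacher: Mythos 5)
Your proof is correct: the singular value decomposition reduces both sides to explicit quantities in the singular values $\sigma_i$, the claimed inequality becomes AM--GM for the numbers $\sigma_i^2$, and the equality case (all $\sigma_i^2$ equal, i.e.\ $A^{\sT}A = \lambda^2 \Id$) is exactly $A^* g_2 = \lambda^2 g_1$ with $\lambda^2 = \tfrac{1}{n_1}|A|^2$, with the rank-deficient regime $n_1 > n_2$ handled correctly. The paper gives no proof of its own---it only cites~\cite[Corollary 2.5 and Lemma 2.1]{CKM1}---and your argument is the standard one underlying that reference, so your write-up simply supplies, in essentially the canonical way, the proof the paper leaves to its citation.
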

\begin{proof}
A proof can be found, for example, in~\cite[Corollary 2.5 and Lemma 2.1]{CKM1}.
\end{proof}

\textbf{Funding.} The research of the second author is supported by an NSERC Discovery Grant. 

\textbf{Acknowledgements.} This work comprises one part of the first author's PhD thesis. The authors thank Da Rong Cheng and Jesse Madnick for useful discussions, and are very grateful to the referee for numerous helpful comments and suggestions that greatly improved our earlier draft.

\section{Preliminaries} \label{sec:prelim}

In this section we review some standard material on calibrations and $p$-harmonic maps.

\subsection{Calibrations} \label{sec:calibrations}

The classical theory of calibrated geometry was initiated by Harvey--Lawson~\cite{HL}. A good reference for beginners is the text of Joyce~\cite{Joyce-book}. Let $(M^n, h)$ be a Riemannian manifold.

\begin{defn} \label{defn:calibration}
Let $\alpha \in \Omega^k$ on $(M^n, h)$. We say that $\alpha$ is a \emph{calibration} if
\begin{equation} \label{eq:calibration}
\alpha (v_1 \wedge \cdots \wedge v_k) \leq | v_1 \wedge \cdots \wedge v_k | \quad \text{for all $v_1, \ldots, v_k \in T_x M$ and all $x \in M$}.
\end{equation}
This is clearly equivalent to saying that
$$ -1 \leq \alpha(e_1, \ldots, e_k) \leq 1 \quad \text{for all \emph{orthonormal} $e_1, \ldots, e_k \in T_x M$ and all $x \in M$.} $$

Let $L^k$ be an oriented submanifold of $M$. We say $L$ is \emph{calibrated} with respect to $\alpha$ if $\alpha|_{L} = \vol_L$, where $\vol_L$ is the Riemannian volume form associated to the orientation and the induced metric $h|_L$. (That is, $L$ is $\alpha$-calibrated if equality in~\eqref{eq:calibration} is attained on each oriented tangent space $T_x L$ of $L$.)
\end{defn}

The classical \emph{fundamental theorem of calibrated geometry} of Harvey--Lawson~\cite{HL} says that if the calibration form $\alpha$ is \emph{closed}, then a calibrated submanifold is locally volume minimizing in its homology class. In particular, if $d\alpha = 0$, then a calibrated submanifold is \emph{minimal} (has vanishing mean curvature).

We collect here some results and definitions on calibrations which are needed later.

\begin{lemma}[The first cousin principle] \label{lemma:cousin}
Let $\alpha \in \Omega^k$ be a calibration, and let $L_x \in \Lambda^k (T_x M)$ be an oriented $k$-dimensional subspace which is calibrated with respect to $\alpha$. If $e_1, \ldots e_{k-1}$ are orthonormal in $L_x$ and $w \in L_x^{\perp}$, then $\alpha(e_1, \ldots, e_{k-1}, w) = 0$.
\end{lemma}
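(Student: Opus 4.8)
The plan is to use a first-variation argument, exploiting the fact that a calibrated plane realizes the maximum value of $\alpha$ among all orthonormal $k$-frames. First I would reduce to the case $|w| = 1$: since $\alpha(e_1, \ldots, e_{k-1}, \cdot)$ is linear in its last argument, the claim is trivial when $w = 0$ and otherwise follows by scaling, so I may assume $w$ is a unit vector. Next I would extend $e_1, \ldots, e_{k-1}$ to a positively oriented orthonormal basis $e_1, \ldots, e_{k-1}, e_k$ of $L_x$; this is possible because the orthogonal complement of $\spn\{e_1, \ldots, e_{k-1}\}$ inside $L_x$ is one-dimensional, and I fix the sign of $e_k$ using the orientation of $L_x$. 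Since $L_x$ is $\alpha$-calibrated we have $\alpha|_{L_x} = \vol_{L_x}$, and hence $\alpha(e_1, \ldots, e_{k-1}, e_k) = 1$. Note also that $w \perp e_i$ for every $i$, since $w \in L_x^{\perp}$ while each $e_i \in L_x$.

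Now I would introduce the one-parameter family of unit vectors $v_\theta = \cos\theta \, e_k + \sin\theta \, w$. For each $\theta$ the vectors $e_1, \ldots, e_{k-1}, v_\theta$ are orthonormal (each $e_i$ is orthogonal to both $e_k$ and $w$, and $v_\theta$ is a unit vector), so the orthonormal form of the calibration inequality~\eqref{eq:calibration} gives
\begin{equation*}
f(\theta) := \alpha(e_1, \ldots, e_{k-1}, v_\theta) \leq 1 \quad \text{for all } \theta \in \R.
\end{equation*}
Expanding by multilinearity in the last slot and writing $c := \alpha(e_1, \ldots, e_{k-1}, w)$, I obtain $f(\theta) = \cos\theta + c \sin\theta$, with $f(0) = 1$.

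Since $f(\theta) \leq 1 = f(0)$ for every $\theta \in \R$, the point $\theta = 0$ is a global maximum of the smooth function $f$, so $f'(0) = 0$. But $f'(\theta) = -\sin\theta + c\cos\theta$, whence $f'(0) = c$, and therefore $c = \alpha(e_1, \ldots, e_{k-1}, w) = 0$, as claimed. The argument is elementary calculus once the family $v_\theta$ is in place; the only step requiring genuine care is the orientation and normalization bookkeeping in the first paragraph, namely ensuring that $\alpha(e_1, \ldots, e_{k-1}, e_k)$ equals $+1$ rather than $-1$ and that $v_\theta$ remains a unit vector orthogonal to the fixed frame $e_1, \ldots, e_{k-1}$.
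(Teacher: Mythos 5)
Your proof is correct and takes essentially the same approach as the paper: both complete $e_1, \ldots, e_{k-1}$ to an oriented orthonormal basis of $L_x$, rotate $e_k$ toward $w$ via the unit-vector family $(\cos\theta)\, e_k + (\sin\theta)\, w$, and conclude from the first-derivative test at the maximum $\theta = 0$ of the calibration inequality. The only difference is that you explicitly reduce to $|w| = 1$ by linearity, a normalization the paper leaves implicit.
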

\begin{proof}
We can choose $e_k \in L_x$ so that $e_1, \ldots, e_k$ is an oriented orthonormal basis of $L_x$. Let $w_t = (\cos t) e_k + (\sin t) w$. Then $e_1, \ldots, e_{k-1}, w_t$ are orthonormal for all $t \in \R$. Thus we have that
\begin{align*}
f(t) := \alpha(e_1, \ldots, e_{k-1}, w_t) & = (\cos t) \alpha(e_1, \ldots, e_{k-1}, e_k) + (\sin t) \alpha(e_1, \ldots, e_{k-1}, w)
\end{align*}
satisfies $f(t) \leq 1$ for all $t \in \R$ with equality at $t=0$. Thus $f'(0) = \alpha(e_1, \ldots, e_{k-1}, w) = 0$.
\end{proof}

\begin{prop} \label{prop:star-calibration}
If $\alpha \in \Omega^k$ is calibration, then $\star \alpha \in \Omega^{n-k}$ is also a calibration.
\end{prop}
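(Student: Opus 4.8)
The plan is to verify the defining comass-one condition for $\star \alpha$ directly, by relating its values on orthonormal $(n-k)$-frames to the values of $\alpha$ on the complementary orthonormal $k$-frames via the pointwise action of the Hodge star on an oriented orthonormal coframe. Using the equivalent characterization recorded in Definition~\ref{defn:calibration}, it suffices to prove that $-1 \leq (\star \alpha)(f_1, \ldots, f_{n-k}) \leq 1$ for every collection of orthonormal vectors $f_1, \ldots, f_{n-k} \in T_x M$ and every $x \in M$.

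First I would extend the orthonormal set $f_1, \ldots, f_{n-k}$ to a full orthonormal basis of $T_x M$ by choosing orthonormal $e_1, \ldots, e_k$ spanning the orthogonal complement, fixing the ordering and signs of the $e_i$ so that $(e_1, \ldots, e_k, f_1, \ldots, f_{n-k})$ is a \emph{positively oriented} orthonormal basis. Writing $e^1, \ldots, e^k, f^1, \ldots, f^{n-k}$ for the dual coframe, the standard convention $\star(e^1 \wedge \cdots \wedge e^k) = f^1 \wedge \cdots \wedge f^{n-k}$ for an oriented orthonormal coframe yields the key identity
$$ (\star \alpha)(f_1, \ldots, f_{n-k}) = \alpha(e_1, \ldots, e_k). $$
This is the heart of the argument: expanding $\alpha$ in the coframe, the coefficient of $e^1 \wedge \cdots \wedge e^k$ in $\alpha$ equals the coefficient of $f^1 \wedge \cdots \wedge f^{n-k}$ in $\star \alpha$, because the Hodge star sends complementary basis forms to one another with sign $+1$ in this positively oriented ordering.

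Granting the identity, the conclusion is immediate: since $e_1, \ldots, e_k$ are orthonormal and $\alpha$ is a calibration, Definition~\ref{defn:calibration} gives $-1 \leq \alpha(e_1, \ldots, e_k) \leq 1$, and hence $-1 \leq (\star \alpha)(f_1, \ldots, f_{n-k}) \leq 1$, which is exactly the calibration condition for $\star \alpha$.

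I expect the only genuine obstacle to be the bookkeeping of orientation and sign conventions. One must confirm that the orthonormal complement can always be reordered, or one of the $e_i$ replaced by its negative, so as to make the full basis positively oriented, ensuring that no spurious factor of $-1$ enters the Hodge star identity. It is also worth noting why the \emph{two-sided} bound $-1 \leq {}\cdot{} \leq 1$ is both needed and available: swapping two of the $f_j$ (or reversing the orientation of the $(n-k)$-frame) flips the sign of $(\star \alpha)(f_1, \ldots, f_{n-k})$, so the symmetric range in Definition~\ref{defn:calibration} is essential to cover all orthonormal frames at once. Once the sign $+1$ is pinned down, no further computation is required.
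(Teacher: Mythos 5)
Your proof is correct and takes essentially the same approach as the paper: both reduce the comass bound for $\star \alpha$ on an orthonormal $(n-k)$-frame to the comass bound for $\alpha$ on the complementary orthonormal $k$-frame, using the fact that the Hodge star exchanges a unit simple vector with that of its orthogonal complement. The only cosmetic difference is that the paper argues abstractly via the isometry property of $\star$ and $\star^2 = \pm 1$ (letting the symmetric interval $[-1,1]$ absorb the sign), whereas you pin the sign to $+1$ by an explicit positively oriented adapted coframe computation.
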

\begin{proof}
Using the metric we can identify $\Lambda^k (T_x M)$ with $\Lambda^k (T_x^* M)$. Let $\Pi_x = e_1 \wedge \cdots \wedge e_k$, where $e_1, \ldots, e_k$ are orthonormal. Then using the fact that $\star$ is an isometry, we have
$$(\star \alpha) (\Pi_x) = g( \star \alpha, \Pi_x ) = g( \star^2 \alpha, \star \Pi_x ) = \pm g( \alpha, \star \Pi_x ) = \pm \alpha(\star \Pi_x) \in [-1,1], $$
because $\alpha$ is a calibration.
\end{proof}

\begin{defn} \label{defn:P-from-alpha}
Let $\alpha \in \Omega^{k}$. Define $P_{\alpha} \colon \Gamma(\Lambda^{k-1}(TM)) \to \Gamma(TM)$ by
$$ g ( P_{\alpha} (v_1 \wedge \cdots \wedge v_{k-1}), v_k) = \alpha (v_1 \wedge \cdots \wedge v_k). $$
That is, $P_{\alpha}$ is the vector-valued $(k-1)$-form obtained by ``raising an index'' on $\alpha$ using the metric.
\end{defn}

\begin{rmk} \label{rmk:VCP}
For some calibrations $\alpha$, the vector-valued form $P_{\alpha}$ is a \emph{vector cross product}. This means that $|P_{\alpha} (v_1 \wedge \cdots \wedge v_{k-1})|^2 = |v_1 \wedge \cdots \wedge v_{k-1}|^2$. This holds, in particular, for the K\"ahler calibration of degree 2, and for the associative and Cayley calibrations. See~\cite[Section 2]{CKM1} for more details. One of the key points of our Section~\ref{sec:Smith-immersions} below is the observation that the results of~\cite{CKM1} continue to hold for all calibrations, not just for those for which $P_{\alpha}$ is a vector cross product.
\end{rmk}

\begin{prop} \label{prop:P-adjoint}
Let $\alpha \in \Omega^{k}$. The adjoint $P_{\alpha}^{\top} \colon \Gamma(TM) \to \Gamma(\Lambda^{k-1}(TM))$ is given by
$$ P_{\alpha}^{\top}(v) = (-1)^{k-1} v \hk \alpha. $$
(There is a metric identification here of $\Lambda^{k-1} (TM)$ and $\Lambda^{k-1} (T^* M)$.)
\end{prop}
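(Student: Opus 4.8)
The plan is to verify the defining property of the adjoint directly on decomposable elements, since these span $\Lambda^{k-1}(TM)$ and both $P_{\alpha}$ and the candidate map on the right-hand side are linear. Thus I would fix $v_1, \ldots, v_{k-1}, v \in T_x M$ and aim to establish
\[
g\big(P_{\alpha}(v_1 \wedge \cdots \wedge v_{k-1}), v\big) = \big\langle v_1 \wedge \cdots \wedge v_{k-1}, (-1)^{k-1} (v \hk \alpha) \big\rangle,
\]
where on the right the $(k-1)$-form $v \hk \alpha$ is identified with an element of $\Lambda^{k-1}(TM)$ via the metric, and $\langle \cdot, \cdot \rangle$ is the induced inner product on $\Lambda^{k-1}(TM)$.

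First I would rewrite the left-hand side using Definition~\ref{defn:P-from-alpha}, turning it into $\alpha(v_1 \wedge \cdots \wedge v_{k-1} \wedge v)$, that is, $\alpha(v_1, \ldots, v_{k-1}, v)$. Next, the key algebraic fact I would invoke is the standard pairing identity: under the metric identification, for any $(k-1)$-form $\beta$ and vectors $v_1, \ldots, v_{k-1}$ one has $\langle v_1 \wedge \cdots \wedge v_{k-1}, \beta \rangle = \beta(v_1, \ldots, v_{k-1})$, using the convention $|e_1 \wedge \cdots \wedge e_{k-1}| = 1$ for orthonormal frames, which is the one in force throughout the paper (cf.\ Definition~\ref{defn:calibration}). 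Applying this with $\beta = v \hk \alpha$, together with the interior-product formula $(v \hk \alpha)(v_1, \ldots, v_{k-1}) = \alpha(v, v_1, \ldots, v_{k-1})$, reduces the right-hand side to $(-1)^{k-1}\, \alpha(v, v_1, \ldots, v_{k-1})$.

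It then remains to reconcile $\alpha(v_1, \ldots, v_{k-1}, v)$ with $(-1)^{k-1}\alpha(v, v_1, \ldots, v_{k-1})$, and this is exactly where the sign is produced: moving the argument $v$ from the last slot to the first slot of the alternating form $\alpha$ is a cyclic permutation realized by $k-1$ transpositions, contributing a factor $(-1)^{k-1}$ that cancels the $(-1)^{k-1}$ in the candidate formula and yields equality. The only real subtlety --- the main obstacle in this otherwise short computation --- is the bookkeeping of this sign together with the identification conventions; in particular one must be consistent about the fact that interior product inserts $v$ into the \emph{first} argument and about the normalization of the inner product on $\Lambda^{k-1}(TM)$, since a different convention would shift the power of $(-1)$. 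Once these conventions are fixed as above, the displayed equality holds for all decomposable inputs $v_1 \wedge \cdots \wedge v_{k-1}$ and all $v$, and hence the stated map is the adjoint $P_{\alpha}^{\top}$.
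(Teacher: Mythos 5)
Your proposal is correct and follows essentially the same route as the paper: both verify the adjoint identity on decomposable elements using Definition~\ref{defn:P-from-alpha}, and both produce the $(-1)^{k-1}$ from moving $v$ across $k-1$ slots. The only cosmetic difference is that the paper packages the interior-product step as the wedge/hook adjointness $g(v \wedge \beta, \alpha) = g(\beta, v \hk \alpha)$ in the induced inner product, whereas you use the equivalent pointwise evaluation formula $(v \hk \alpha)(v_1, \ldots, v_{k-1}) = \alpha(v, v_1, \ldots, v_{k-1})$ together with the pairing identity.
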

\begin{proof}
Let $v_1, \ldots, v_k \in \Gamma(TM)$. We compute
\begin{align*}
g (P_{\alpha} (v_1 \wedge \cdots \wedge v_{k-1}), v_k) & = \alpha (v_1 \wedge \cdots \wedge v_k) \\
& = g(v_1 \wedge \cdots \wedge v_k, \alpha) \\
& = (-1)^{k-1} g(v_k \wedge v_1 \wedge \cdots \wedge v_{k-1}, \alpha) \\
& = (-1)^{k-1} g(v_1 \wedge \cdots \wedge v_{k-1}, v_k \hk \alpha),
\end{align*}
hence the result follows.
\end{proof}

\subsection{Harmonic maps and $p$-harmonic maps} \label{sec:p-harmonic}

We briefly review some basic facts about harmonic maps and $p$-harmonic maps. For more details, the reader can consult Eells--Lemaire~\cite{EL} or Baird--Gudmundsson~\cite{BG}.

If $u \colon (M_1^{n_1}, g_1) \to (M_2^{n_2}, g_2)$ is a smooth map between Riemannian manifolds, then its differential $du$ is a smooth section of $T^* M_1 \otimes u^* T 
M_2$, and its value at $x \in M_1$ is the linear map $du_x \colon T_x M_1 \to T_{u(x)} M_2$. The bundle $T^* M_1 \otimes u^* T M_2$ has a natural fibre metric $g_1^{-1} \otimes u^* g_2$ which allows us to define the smooth function $|du|^2$ on $M_1$. One can also verify that 
\begin{equation} \label{eq:du-norm}
|du|^2 = \tr_{g_1} (u^* g_2).
\end{equation}
A useful observation is that if $e_1, \ldots, e_{n_1}$ is a local orthonormal frame for $(M_1, g_1)$, then
\begin{equation} \label{eq:du-norm-v2}
|du_x|^2 = \sum_{i=1}^{n_1} (u^* g_2)_x (e_i, e_i) = \sum_{i=1}^{n_1} g_2 (du_x (e_i), du_x (e_i)).
\end{equation}

\begin{defn} \label{defn:p-energy}
Let $u \colon (M_1, g_1) \to (M_2, g_2)$ be a smooth map. Let $p \in [2, \infty)$. If $M_1$ is compact, then the \emph{$p$-energy} of $u$ is defined to be
$$ E_p (u) \coloneqq \frac{1}{(\sqrt{p})^p} \int_{M_1} |du|^p \vol_{M_1}. $$
Note that up to a constant factor (which is chosen for later convenience), the $p$-energy is the $p^{\text{th}}$ power of the $L^p$ norm of $du$. We say that a map $u$ is \emph{$p$-harmonic} if it is a critical point of the functional $E_p$. That is, a $p$-harmonic map is a solution to the Euler--Lagrange equation for the $p$-energy functional. This equation is
\begin{equation} \label{eq:p-harmonic}
\Div(|du|^{p-2} du) = 0 \in \Gamma(u^ *TM_2),
\end{equation}
and is called the \emph{$p$-harmonic map equation}. When $p=2$, this reduces to the classical elliptic harmonic map equation $\Div (du) = 0$, and a $2$-harmonic map is just called a harmonic map. But for $p > 2$ this equation is a \emph{degenerate} elliptic equation.

More generally, the section of $u^* TM_2$ given by
\begin{equation} \label{eq:p-tension}
\tau_p (u) \coloneqq \Div(|du|^{p-2} du)
\end{equation}
is called the \emph{$p$-tension} of $u$, so a map $u$ is $p$-harmonic if and only if it has vanishing $p$-tension. In fact, the $p$-tension $\tau_p (u)$ is, up to a positive factor, the negative gradient of the $p$-energy functional with respect to the $L^2$ inner product.

Note that if $M_1$ is not compact we can still take equation~\eqref{eq:p-harmonic} as the definition of $p$-harmonic.
\end{defn}

The $p$-energy and $p$-harmonic map equation are related to conformal geometry as follows. Let $f$ be a positive function on $M_1$, so $\tilde g_1 = f^2 g_1$ is another metric on $M_1$ in the same conformal class as $g_1$. Then we have
$$ |du|^2_{\tilde g_1, g_2} = f^{-2} |du|^2_{g_1, g_2} \qquad \text{and} \qquad \vol_{M_1, \tilde g_1} = f^{n_1}\vol_{M_1, g_1}. $$
It follows that
$$ E_{p, \tilde g_1, g_2} (u) = \frac{1}{(\sqrt{p})^p} \int_{M_1} |du|^p_{\tilde g_1, g_2} \vol_{M_1, \tilde g_1} = \frac{1}{(\sqrt{p})^p} \int_{M_1} f^{n_1 - p} |du|^p_{g_1, g_2} \vol_{M_1, g_1}, $$
and thus the $p$-energy of a map $u \colon (M_1^{n_1}, g_1) \to (M_2^{n_2}, g_2)$ is \emph{conformally invariant} (that is, depends only on the conformal class of $g_1$) if $p = n_1$. With a bit more effort, one can similarly compute that
$$ \tau_{p, \tilde g_1, g_2} (u) = f^{-p} \tau_{p, g_1, g_2} (u) + f^{-p-1} |du|^{p-2} (n_1 - p) g_1 (df, du), $$
which again shows that the notion of a $p$-harmonic map depends only on the conformal class of $g_1$ if $p = n_1$.

The case that has received the most attention classically is the conformal invariance of the $2$-energy (also called the Dirichlet energy) from a $2$-dimensional oriented Riemannian manifold $(\Sigma^2, g)$ into another Riemannian manifold $(M, h)$. Since this depends only on the conformal class of $g$ on $\Sigma^2$, we see that the notion of a harmonic map from a \emph{Riemann surface} $\Sigma^2$ into a Riemannian manifold is well-defined.

See Remarks~\ref{rmk:immersion-conf-inv} and Remarks~\ref{rmk:submersion-conf-inv} for the precise formulation of ``conformal invariance'' for Smith immersions and Smith submersions.

\section{Smith immersions} \label{sec:Smith-immersions}

The notion of a \emph{Smith immersion} was studied by Cheng--Karigiannis--Madnick in~\cite{CKM1} and~\cite[Section 3.3]{CKM2} where it was assumed that the calibration form $\alpha$ is induced from a vector cross product. In this section we introduce a slightly modified definition of Smith immersions which applies to \emph{any} calibration $\alpha$, not just those induced by vector cross products. In the vector cross product case, our new definition is equivalent to the earlier definition. Moreover, our more general definition still enjoys all the analytic properties established in~\cite[Sections 4 and 5]{CKM1}. See Section~\ref{sec:immersions-analysis}.

In this section, $u \colon (L^k, g) \to (M^n, h)$ is a smooth map between Riemannian manifolds, with $k \leq n$. Recall that $u \colon (L^k, g) \to (M^n, h)$ is an \emph{immersion} if $\rank(du_x) = k$ for all $x \in L$. 

\subsection{Smith immersions and the energy inequality} \label{sec:Smith-imm-energy}

Before we can define Smith immersions, we recall some facts about (weakly) conformal maps.
 
\begin{defn} \label{defn:weakly-conformal-immersion}
A smooth map $u \colon (L^k, g) \to (M^n, h)$ is called \emph{(weakly) conformal} if
$$ u^* h = \lambda^2 g $$
for some smooth function $\lambda \geq 0$ which is continuous (and smooth away from $0$) on $L$. This function $\lambda$ is called the dilation. It then follows from~\eqref{eq:du-norm} that necessarily $\lambda^2 = \frac{1}{k} |du|^2$.

Let $L^{0} \subseteq L$ be the open set where $|du| \neq 0$. From $u^* h = \frac{1}{k} |du|^2 g$, we deduce that $\rest{u}{L^0} \colon L^0 \to M$ is an immersion. When $L^0 = L$, we say that $u$ is a \emph{conformal immersion}. An immersion $u \colon (L^k, g) \to (M^n, h)$ is called a \emph{Riemannian immersion} if $u^* h = g$ on $L$, or equivalently if it is a conformal immersion with dilation $\lambda = 1$.
\end{defn}

\begin{thm} \label{thm:immersion-ineq}
Let $u \colon (L^k, g) \to (M^n, h)$ be a smooth map. Let $\alpha \in \Omega^{k} (M)$ be a calibration. Then
\begin{equation} \label{eq:immersion-ineq}
u^* \alpha \leq \lambda^k \vol_L, \quad \text{where $\lambda = \tfrac{1}{\sqrt{k}} |du|$}.
\end{equation}
Moreover, equality holds if and only if:
\begin{itemize} \setlength\itemsep{-1mm}
\item $u^* h = \lambda^2 g$ (so $u$ is a weakly conformal immersion), and
\item the image $u(L^0)$ is calibrated with respect to $\alpha$.
\end{itemize}
\end{thm}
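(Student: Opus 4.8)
The plan is to prove the pointwise inequality $u^* \alpha \leq \lambda^k \vol_L$ by composing two inequalities stated earlier in the paper, and then to track precisely when each of them is an equality. The key observation is that $u^* \alpha$ is a $k$-form on a $k$-manifold, so it is a scalar multiple of $\vol_L$, and we want to bound that scalar. Let me work at a fixed point $x \in L$ and choose an oriented orthonormal basis $e_1, \ldots, e_k$ of $T_x L$. Then
$$ (u^* \alpha)(e_1, \ldots, e_k) = \alpha\big( du_x(e_1), \ldots, du_x(e_k) \big) = \alpha\big( \Lambda^k du_x (e_1 \wedge \cdots \wedge e_k) \big). $$
The first step is to apply the defining inequality~\eqref{eq:calibration} of a calibration to the $k$-vector $\Lambda^k du_x(e_1 \wedge \cdots \wedge e_k)$, which gives
$$ (u^* \alpha)(e_1, \ldots, e_k) \leq \big| \Lambda^k du_x (e_1 \wedge \cdots \wedge e_k) \big| = \big| \Lambda^{k} du_x \big|, $$
the last equality holding because $e_1 \wedge \cdots \wedge e_k$ is a unit decomposable $k$-vector.

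\textbf{Applying Hadamard and combining.}

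The second step is to apply Hadamard's inequality (Proposition~\ref{prop:Hadamard}) to the linear map $A = du_x \colon (T_x L, g) \to (T_{u(x)} M, h)$, with $n_1 = k$. This yields
$$ \big| \Lambda^k du_x \big| \leq \frac{1}{(\sqrt{k})^{k}} |du_x|^k = \lambda^k. $$
Chaining the two displayed bounds gives $(u^* \alpha)(e_1, \ldots, e_k) \leq \lambda^k$, and since $\vol_L(e_1, \ldots, e_k) = 1$ on an oriented orthonormal basis, this is exactly the claimed inequality~\eqref{eq:immersion-ineq}. I should remark that this argument is entirely pointwise and local, so no compactness or closedness of $\alpha$ is needed here — those hypotheses enter only later for the integral energy inequality.

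\textbf{The equality analysis.}

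The main content, and the step I expect to require the most care, is the characterization of equality, because equality in~\eqref{eq:immersion-ineq} forces equality in \emph{both} of the intermediate inequalities simultaneously. Equality in the Hadamard step, by the criterion in Proposition~\ref{prop:Hadamard}, is exactly the condition $du_x^* h = \lambda^2 g$ at $x$ with $\lambda^2 = \frac{1}{k}|du_x|^2$; asserting this at every point is precisely the statement that $u$ is a weakly conformal map (Definition~\ref{defn:weakly-conformal-immersion}), which on $L^0$ makes $u$ a weakly conformal immersion. Equality in the calibration step means the unit $k$-vector $\Lambda^k du_x(e_1 \wedge \cdots \wedge e_k)/|\Lambda^k du_x|$ is calibrated by $\alpha$; the care here is that this $k$-vector is (up to a positive scalar) the image plane $du_x(T_x L) = T_{u(x)} u(L^0)$ with its induced orientation, so equality says exactly that the tangent space of the image is $\alpha$-calibrated at each point of $L^0$.

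The one subtlety to handle cleanly is the normalization and the role of the critical set. On $L^0$ we have $\lambda > 0$, so $\Lambda^k du_x \neq 0$ is a genuine nonzero decomposable $k$-vector and the "image plane is calibrated" interpretation is unambiguous. On $L \setminus L^0$ we have $du_x = 0$, hence $\lambda = 0$ and $u^*\alpha = 0$, so both sides of~\eqref{eq:immersion-ineq} vanish and equality holds vacuously — this is why the calibration condition in the theorem is only required on $u(L^0)$, and why the weak-conformality condition $u^*h = \lambda^2 g$ holds trivially on the critical set. Conversely, if both bulleted conditions hold, then both intermediate inequalities are equalities pointwise, so~\eqref{eq:immersion-ineq} is an equality; I would verify that the orientation conventions match so that $(u^*\alpha)(e_1,\ldots,e_k)$ is $+\lambda^k$ rather than $-\lambda^k$, which is guaranteed since a calibrated plane satisfies $\alpha|_{T_{u(x)}u(L^0)} = \vol$ with the correct sign by Definition~\ref{defn:calibration}.
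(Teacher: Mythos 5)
Your proposal is correct and follows essentially the same route as the paper's own proof: pointwise reduction to an oriented orthonormal frame, the calibration bound followed by Hadamard's inequality (Proposition~\ref{prop:Hadamard}), and then the equality analysis identifying the Hadamard equality case with weak conformality and the calibration equality case with the image plane being $\alpha$-calibrated, with the critical set $L \setminus L^0$ handled trivially. The only refinement worth noting is that on $L^0$ the nonvanishing of $\Lambda^k du_x$ follows from weak conformality together with $\lambda > 0$ (not from $\lambda > 0$ alone), but since you invoke the Hadamard equality first, your argument is sound as written.
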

\begin{proof}
We trivially have equality at points where $du$ is zero. Let $x \in L^0$. Let $e_1, \ldots, e_k$ be an orthonormal frame for $T_x L$. Then we have
\begin{align*}
(u^* \alpha) (e_1 \wedge \cdots \wedge e_k) & = \alpha ((\Lambda^k du) (e_1 \wedge \cdots \wedge e_k)) & & \\
& \leq |(\Lambda^k du) (e_1 \wedge \cdots \wedge e_k)| & & \text{(because $\alpha$ is a calibration)} \\
& = |\Lambda^k du| \, |e_1 \wedge \cdots \wedge e_k| & & \\
& \leq \lambda^k & & \text{(by Proposition~\ref{prop:Hadamard})},
\end{align*}
which concludes the proof of~\eqref{eq:immersion-ineq}.

Equality holds if and only if equality holds in the two inequalities of the above computation. If the second inequality above is an equality, then by Proposition~\ref{prop:Hadamard} we have $u^* h = \lambda^2 g$, so $u$ is weakly conformal. Let $x \in L^0$ and let $e_1, \ldots, e_k$ be an orthonormal frame for $T_x L$, so $\frac{1}{\lambda} du(e_1), \ldots, \frac{1}{\lambda} du(e_k)$ is an orthonormal frame for $du(T_x L) \subseteq T_{u(x)} M$. If the first inequality above is an equality, then we see that we must have $\alpha (\frac{1}{\lambda} du (e_1) \wedge \cdots \wedge \frac{1}{\lambda} du (e_k)) = 1$. That is, the image $u(L^0)$ is calibrated with respect to $\alpha$.
\end{proof}

\begin{defn} \label{defn:Smith-immersion}
If equality holds in~\eqref{eq:immersion-ineq}, we say that $u$ is a \textbf{Smith immersion} with respect to $\alpha$. That is, a Smith immersion with respect to $\alpha$ is a smooth map $u \colon (L^k, g) \to (M^n, h)$ such that
\begin{equation} \label{eq:Smith-immersion-eq}
u^* \alpha = \frac{1}{(\sqrt{k})^k} |du|^k \vol_L, \qquad u^* h = \frac{1}{k} |du|^2 g,
\end{equation}
at all points on $L$. [However, recall that the first equation automatically implies the second equation.] Note that, strictly speaking, a Smith immersion is only actually an immersion on the open subset $L^0 = \{ x \in L : du_x \neq 0 \}$ of $L$.
\end{defn}

\begin{thm}[Energy Inequality] \label{thm:Smith-immersion-energy-ineq}
Let $\alpha \in \Omega^{k}(M)$ be a \emph{closed} calibration. Let $u \colon (L^k, g) \to (M^n, h)$ be a Smith immersion with respect to $\alpha$. Suppose $L$ is compact. Then $u$ is $k$-harmonic in the sense that it is a critical point of $E_k$.
\end{thm}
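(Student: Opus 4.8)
The plan is to prove something stronger than $k$-harmonicity: that $u$ is a \emph{global minimizer} of $E_k$ within its homotopy class, and then to invoke the elementary principle that a minimizer at which the functional is differentiable is automatically a critical point. This is precisely the map-level analogue of the classical calibration argument, in which a closed calibrated submanifold minimizes volume in its homology class. First I would integrate the pointwise inequality of Theorem~\ref{thm:immersion-ineq} over the compact manifold $L$: for \emph{any} smooth map $v \colon (L^k,g)\to(M^n,h)$ this gives
$$ \int_L v^*\alpha \;\leq\; \frac{1}{(\sqrt{k})^k}\int_L |dv|^k\,\vol_L \;=\; E_k(v). $$
Since $u$ is a Smith immersion, equality holds pointwise in~\eqref{eq:immersion-ineq}, and hence $E_k(u) = \int_L u^*\alpha$.

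The second ingredient is that $\int_L v^*\alpha$ is unchanged along any variation of $u$, and this is exactly where the hypothesis $d\alpha = 0$ enters. Given a smooth family $u_t$ with $u_0 = u$, I would set $F\colon L\times(-\eps,\eps)\to M$, $F(x,t)=u_t(x)$, and apply Stokes' theorem to $F^*\alpha$ on $L\times[0,t]$. Because $L$ is closed and $d\alpha=0$, this yields
$$ \int_L u_t^*\alpha - \int_L u_0^*\alpha \;=\; \int_{L\times[0,t]} F^*(d\alpha) \;=\; 0 $$
(if $\partial L\neq\emptyset$ one instead restricts to variations fixing $\partial L$, killing the boundary term). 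Combining this constancy with the integrated inequality and the Smith equality gives, for every variation,
$$ E_k(u_t) \;\geq\; \int_L u_t^*\alpha \;=\; \int_L u_0^*\alpha \;=\; E_k(u), $$
so that $t=0$ is an interior global minimum of $\phi(t) := E_k(u_t)$.

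The remaining task is to pass from ``minimizer'' to ``critical point,'' and I expect this to be the only genuinely delicate step, since for $k>2$ the density $|du|^k$ is degenerately elliptic and one must verify that $\phi$ is differentiable at its minimum. The key observation I would record is that $|du_t|^k = (|du_t|^2)^{k/2}$, where $t\mapsto |du_t|^2(x)$ is smooth for each fixed $x$, while $s\mapsto s^{k/2}$ is $C^1$ on $[0,\infty)$ \emph{precisely because} $k\geq 2$: its derivative $\tfrac{k}{2}s^{k/2-1}$ extends continuously to $s=0$. Hence the integrand is differentiable in $t$, and compactness of $L$ lets me differentiate under the integral sign, so $\phi$ is differentiable with $\phi'(0)$ equal to the standard first variation $-\int_L \langle \tau_k(u),V\rangle\,\vol_L$, where $V=\partial_t u_t|_{t=0}$ and $\tau_k(u)=\Div(|du|^{k-2}du)$. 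Since $\phi$ attains an interior minimum at $t=0$, we get $\phi'(0)=0$ for all variation fields $V$, forcing $\tau_k(u)=0$, which is exactly the assertion that $u$ is $k$-harmonic. Everything besides this differentiability check is a direct consequence of the pointwise inequality together with the closedness of $\alpha$.
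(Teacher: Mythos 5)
Your proposal is correct and follows essentially the same route as the paper: integrate the pointwise inequality~\eqref{eq:immersion-ineq}, use $d\alpha = 0$ to make $\int_L u^*\alpha$ a deformation-invariant lower bound that Smith immersions attain with equality, and conclude that such maps are minimizers and hence critical points of $E_k$. The only difference is that you spell out two steps the paper leaves implicit --- the Stokes' theorem argument on $L\times[0,t]$ behind the phrase ``topological quantity,'' and the $C^1$ regularity of $s\mapsto s^{k/2}$ (valid since $k\geq 2$) needed to differentiate $t \mapsto E_k(u_t)$ at its minimum --- both of which are correct and welcome additions.
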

\begin{proof}
For any smooth map $u \colon (L^k, g) \to (M^n, h)$, let $\lambda = \frac{1}{\sqrt{k}} |du|$. Using~\eqref{eq:immersion-ineq} we have
\begin{align*}
E_k (u) & = \frac{1}{(\sqrt{k})^k}\int_L |du|^k \vol_L = \int_L \lambda^k \vol_L \geq \int_L u^* \alpha = [\alpha] \cdot u_{*}[L],
\end{align*}
where we have used the fact that $\alpha$ is closed. Thus the $k$-energy of $u$ is bounded from below by a topological quantity, as it depends only on the cohomology class $[\alpha]$ and the homotopy class of $u$. Moreover, by Theorem~\ref{thm:immersion-ineq}, equality holds if and only if $u$ is a Smith immersion. This shows that such maps are local minimizers of $E_k$ and thus are $k$-harmonic.
\end{proof}

We note that Theorem~\ref{thm:Smith-immersion-energy-ineq} still holds if $L$ is noncompact. See Theorem~\ref{thm:Smith-immersion-direct-harmonic}.

\begin{rmk} \label{rmk:immersion-conf-inv}
Since a Smith immersion $u \colon (L^k, g) \to (M^n, h)$ with respect to $\alpha \in \Omega^k (M)$ is in particular a $k$-harmonic map (when $d \alpha = 0$), by the discussion at the end of Section~\ref{sec:p-harmonic}, we expect that the notion of a Smith immersion should depend only on the conformal class $[g]$ of the metric on the domain $L$. Indeed, this is true even without the assumption that $d\alpha = 0$. To see this, suppose $\tilde g = f^2 g$ for some smooth positive function on $L$. From $\eqref{eq:du-norm}$ we get
$$ \wt \lambda^2 = \frac{1}{k} |du|_{\wt g, h}^2 = f^{-2} \frac{1}{k} |du|_{g, h}^2 = f^{-2} \lambda^2, $$
and clearly $\wt \vol_L = f^k \vol_L$. It follows that the Smith immersion equations $u^* \alpha = \lambda^k \vol_L$ and $u^* h = \lambda^2 g$ are \emph{invariant under conformal scaling} of the domain metric $g$ on $L$.
\end{rmk}

\subsection{Direct proof that Smith immersions are $k$-harmonic} \label{sec:Smith-imm-direct}

In Theorem~\ref{thm:Smith-immersion-direct-harmonic} below we show directly that a Smith immersion satisfies the $k$-harmonic map equation, in the sense that $\tau_k (u) = 0$, without assuming $L$ is compact. This argument appeared earlier in~\cite[Section 3.5]{CKM1} under the assumption that $\alpha$ induces a vector cross product $P_{\alpha}$ by raising an index. We provide a slightly modified argument here to show that this assumption was in fact unnecessary. First we need some preliminary results.

\begin{prop} \label{prop:Smith-immersion-direct-prelim}
Let $u \colon (L^k, g) \to (M^n, h)$ be a Smith immersion with respect to the calibration form $\alpha \in \Omega^{k}$ on $M$. Then we have
\begin{equation} \label{eq:Smith-imm-P-version}
P_{\alpha} \circ \Lambda^{k-1}(du) \circ \star_L = \frac{ (-1)^{k-1}}{(\sqrt{k})^{k-2}}|du|^{k-2} du.
\end{equation}
\end{prop}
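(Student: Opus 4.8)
The plan is to verify the identity \eqref{eq:Smith-imm-P-version} pointwise, since both sides are tensorial. Both sides clearly vanish where $du_x = 0$, so I restrict to a point $x \in L^0$, fix a \emph{positively oriented} orthonormal frame $e_1, \ldots, e_k$ of $T_x L$, and check the asserted equality of linear maps $T_x L \to T_{u(x)} M$ by evaluating on each $e_j$. Because the right-hand side is a scalar multiple of $du$, and using the defining property $g(P_{\alpha}(\xi), w) = \alpha(\xi \wedge w)$ of $P_{\alpha}$ from Definition~\ref{defn:P-from-alpha}, it suffices to show, for every $w \in T_{u(x)} M$, that
\[
g\big(P_{\alpha}(\Lambda^{k-1}(du)(\star_L e_j)), w\big) = \frac{(-1)^{k-1}}{(\sqrt k)^{k-2}}\, |du|^{k-2}\, g(du(e_j), w).
\]

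First I would unwind the left-hand side. Since $e_1, \ldots, e_k$ is oriented orthonormal, $\star_L e_j = (-1)^{j-1} e_1 \wedge \cdots \wedge \widehat{e_j} \wedge \cdots \wedge e_k$, so that $\Lambda^{k-1}(du)(\star_L e_j) = (-1)^{j-1} du(e_1) \wedge \cdots \wedge \widehat{du(e_j)} \wedge \cdots \wedge du(e_k)$. Next I would invoke the Smith immersion equations \eqref{eq:Smith-immersion-eq}: writing $\lambda = \tfrac{1}{\sqrt k}|du|$ and $f_i = \tfrac{1}{\lambda} du(e_i)$, the vectors $f_1, \ldots, f_k$ form an oriented orthonormal basis of the image $du(T_x L)$, and evaluating $u^* \alpha = \lambda^k \vol_L$ on $e_1 \wedge \cdots \wedge e_k$ shows this subspace is $\alpha$-calibrated with $\alpha(f_1 \wedge \cdots \wedge f_k) = 1$. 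Pairing with $w$ and applying the defining property of $P_{\alpha}$ then gives
\[
g\big(P_{\alpha}(\Lambda^{k-1}(du)(\star_L e_j)), w\big) = (-1)^{j-1} \lambda^{k-1}\, \alpha(f_1 \wedge \cdots \wedge \widehat{f_j} \wedge \cdots \wedge f_k \wedge w).
\]

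The crux is evaluating $\alpha(f_1 \wedge \cdots \wedge \widehat{f_j} \wedge \cdots \wedge f_k \wedge w)$. Decomposing $w = w^{\top} + w^{\perp}$ into components tangent and normal to $du(T_x L)$, the first cousin principle (Lemma~\ref{lemma:cousin}), applied to the $k-1$ orthonormal vectors $f_1, \ldots, \widehat{f_j}, \ldots, f_k$, annihilates the $w^{\perp}$ contribution; expanding $w^{\top} = \sum_i g(w, f_i) f_i$ then leaves only the $i = j$ term, since the others repeat a factor and wedge to zero. The reordering $f_1 \wedge \cdots \wedge \widehat{f_j} \wedge \cdots \wedge f_k \wedge f_j = (-1)^{k-j} f_1 \wedge \cdots \wedge f_k$ together with $\alpha(f_1 \wedge \cdots \wedge f_k) = 1$ yields $\alpha(f_1 \wedge \cdots \wedge \widehat{f_j} \wedge \cdots \wedge f_k \wedge w) = (-1)^{k-j} g(w, f_j)$. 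Substituting back, the signs combine as $(-1)^{j-1}(-1)^{k-j} = (-1)^{k-1}$ independently of $j$, and using $g(w, f_j) = \lambda^{-1} g(w, du(e_j))$ with $\lambda^{k-1} \lambda^{-1} = \lambda^{k-2} = (\sqrt k)^{-(k-2)} |du|^{k-2}$ produces exactly the desired right-hand side.

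The main obstacle is the sign bookkeeping together with the correct deployment of the first cousin principle: it is precisely Lemma~\ref{lemma:cousin} that forces the normal part $w^{\perp}$ to drop out, which is what guarantees that $P_{\alpha}(\Lambda^{k-1}(du)(\star_L e_j))$ points along $du(e_j)$, i.e.\ tangent to the calibrated image, rather than in some transverse direction. Once this is in hand, the three sign factors coming from $\star_L$, from the reordering, and from the calibrated orientation must be reconciled into the uniform constant $(-1)^{k-1}$, and I expect confirming this cancellation to be the only genuinely delicate step.
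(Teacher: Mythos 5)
Your proof is correct and follows essentially the same route as the paper's: both rest on the Smith equations producing a calibrated orthonormal frame $\tfrac{1}{\lambda}du(e_i)$ in the image, the first cousin principle (Lemma~\ref{lemma:cousin}) to kill the component of the pairing normal to $\im du_x$, and a direct tangential computation with careful sign bookkeeping for $\star_L$. The only difference is organizational: the paper first proves $P_{\alpha} \circ \Lambda^{k-1}(du) = \lambda^{k-2}\, du \circ \star_L$ and then applies $\star_L^2 = (-1)^{k-1}$, whereas you evaluate the composite with $\star_L$ directly on each $e_j$ against an arbitrary decomposed vector $w = w^{\top} + w^{\perp}$ — the same ingredients in a slightly different order.
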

\begin{proof}
The equation is trivially satisfied at points where $du$ is zero. Let $x \in L^0$. Also, recall that we necessarily have $u^* h = \lambda^2 g$. Let $e_1, \ldots, e_k$ be an oriented orthonormal basis for $T_x L$. We compute
\begin{align*}
h ( P_{\alpha} (\Lambda^{k-1} du) (e_1 \wedge \cdots \wedge e_{k-1}), du (e_k) ) & = \alpha ( (\Lambda^{k-1} du) (e_1 \wedge \cdots \wedge e_{k-1}), du (e_k) ) \\ \nonumber
& = u^* \alpha (e_1 \wedge \cdots \wedge e_k) \\ \nonumber
& = \lambda^k \vol_L (e_1 \wedge \cdots \wedge e_k) \\ \nonumber
& = \lambda^k g (\star (e_1 \wedge \cdots \wedge e_{k-1}), e_k) \\ \nonumber
& = \lambda^{k-2} u^* h (\star (e_1 \wedge \cdots \wedge e_{k-1}), e_k) \\ \label{eq:Smith-imm-direct-temp}
& = \lambda^{k-2} h (du (\star (e_1 \wedge \cdots \wedge e_{k-1})), du (e_k) ).
\end{align*}
Denoting $A := P_{\alpha} (\Lambda^{k-1} du) \colon \Lambda^k (T_x L) \to T_{u(x)} M$, the above says
\begin{equation} \label{eq:Smith-imm-direct-temp}
h ( A (e_1 \wedge \cdots \wedge e_{k-1}), du (e_k) ) = \lambda^{k-2} h (du (\star (e_1 \wedge \cdots \wedge e_{k-1})), du (e_k) ).
\end{equation}
Recall that $du (T_x L)$ is $\alpha$-calibrated by Theorem~\ref{thm:immersion-ineq}. Suppose $w \in (\im du_x)^{\perp}$. Then we have
\begin{align*}
h ( A (e_1 \wedge \cdots \wedge e_{k-1}), w) & = h( P_{\alpha} (du (e_1) \wedge \cdots \wedge du (e_{k-1}), w ) \\
& = \alpha( du (e_1), \ldots, du (e_{k-1}), w) = 0
\end{align*}
by Lemma~\ref{lemma:cousin}. Hence we have shown that $\im A \subseteq \im du_x$. It therefore follows from~\eqref{eq:Smith-imm-direct-temp} and the fact that $du_x$ is injective that
$$ P_{\alpha} \circ (\Lambda^{k-1} du) = \lambda^{k-2} du \circ \star_L. $$
Using that $\star^2 = (-1)^{k-1}$ on $1$-forms, we obtain the desired result.
\end{proof}

In the case where $P_{\alpha}$ is a \emph{vector cross product}, it was shown in~\cite[Proposition 2.32]{CKM1} that~\eqref{eq:Smith-imm-P-version} is \emph{equivalent} to our Smith immersion equation~\eqref{eq:Smith-immersion-eq}. In fact this holds in general.

\begin{prop}
We have shown that if $u \colon (L^k,g) \to (M^n, h,\alpha)$ is a Smith immersion, then
\begin{equation} \label{eq:Smith-imm-second-eqq}
P_{\alpha} \circ \Lambda^{k-1}(du) \circ \star_L =(-1)^{k-1} \frac{|du|^{k-2}}{\sqrt{k}^{k-2}} du.
\end{equation}
The converse also holds. That is, if~\eqref{eq:Smith-imm-second-eqq} holds, then $u$ is a Smith immersion.
\end{prop}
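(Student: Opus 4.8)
The forward implication is precisely Proposition~\ref{prop:Smith-immersion-direct-prelim}, so the plan is to establish the converse: assuming~\eqref{eq:Smith-imm-second-eqq}, I would recover both Smith equations of~\eqref{eq:Smith-immersion-eq}. I would argue pointwise. At any point where $du = 0$ both sides of every equation vanish, so it suffices to work on $L^0$, where $\lambda = \tfrac{1}{\sqrt k}|du| > 0$. The guiding idea is that the single vector-valued identity~\eqref{eq:Smith-imm-second-eqq} secretly encodes \emph{both} the conformality condition $u^* h = \lambda^2 g$ and the calibration condition $u^*\alpha = \lambda^k \vol_L$, and the task is to disentangle them by pairing~\eqref{eq:Smith-imm-second-eqq} against $du(e_k)$ and unwinding the defining property of $P_\alpha$ in Definition~\ref{defn:P-from-alpha}.

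Concretely, I would fix $x \in L^0$ and an oriented orthonormal basis $e_1, \ldots, e_k$ of $T_x L$. Since $e_1 \wedge \cdots \wedge e_{k-1} = (-1)^{k-1} \star_L e_k$, applying $P_{\alpha} \circ \Lambda^{k-1}(du)$ and invoking~\eqref{eq:Smith-imm-second-eqq} gives $P_{\alpha}(du(e_1) \wedge \cdots \wedge du(e_{k-1})) = \lambda^{k-2} du(e_k)$, where the two factors of $(-1)^{k-1}$ cancel. Pairing with $du(e_k)$ and using Definition~\ref{defn:P-from-alpha} then yields the key pointwise identity
\begin{equation*}
(u^*\alpha)(e_1 \wedge \cdots \wedge e_k) = \lambda^{k-2} \, |du(e_k)|^2 .
\end{equation*}
This is the heart of the argument; everything else is extraction.

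The main obstacle — and the only genuinely nontrivial step — is turning this one scalar identity into the two separate Smith equations. The crucial observation is that the left-hand side is \emph{independent} of the chosen oriented orthonormal basis, since $u^*\alpha$ is a top-degree form on $T_x L$ and hence equals $c \, \vol_L$ for a scalar $c$ satisfying $c = (u^*\alpha)(e_1 \wedge \cdots \wedge e_k)$ for \emph{every} oriented orthonormal basis. Consequently $|du(e_k)|^2 = c/\lambda^{k-2}$ must take the same value for every unit vector $e_k$ (every unit vector completes to an oriented orthonormal basis, after flipping the sign of one of the remaining vectors if necessary), which forces $u^* h = \mu^2 g$ for a single scalar $\mu^2$. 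Taking the trace against $g$ and using~\eqref{eq:du-norm} gives $k\mu^2 = |du|^2$, i.e.\ $\mu^2 = \lambda^2$, so $u^* h = \lambda^2 g$; substituting back gives $c = \lambda^k$, i.e.\ $u^*\alpha = \lambda^k \vol_L$. These are exactly the equations~\eqref{eq:Smith-immersion-eq}, so $u$ is a Smith immersion. I note that, in contrast to the forward direction, this argument uses neither the first cousin principle nor any vector cross product property of $P_{\alpha}$: the basis-independence of a top-degree form does all the work, which is why the equivalence persists for arbitrary calibrations.
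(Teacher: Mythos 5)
Your proof is correct, and it shares the paper's central computation: evaluate \eqref{eq:Smith-imm-second-eqq} on a basis vector, pair with $du(e_j)$, and use Definition~\ref{defn:P-from-alpha} to convert the result into a statement about $u^*\alpha$. Where you diverge is in how the two Smith equations are extracted from that scalar identity. The paper runs the pairing over \emph{all} indices $i,j$, obtaining the full system $h(du(e_i),du(e_j))=0$ for $i\neq j$ and $\lambda^{k-2}\,h(du(e_i),du(e_i))\,\vol_L = u^*\alpha$ for each $i$; it then averages the diagonal identities, using $\sum_i |du(e_i)|^2 = |du|^2 = k\lambda^2$, to get $u^*\alpha = \lambda^k \vol_L$, and finishes by citing the fact (recorded in Definition~\ref{defn:Smith-immersion}, justified by the equality case of Theorem~\ref{thm:immersion-ineq}) that the first Smith equation automatically implies the second. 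You instead use only the single diagonal pairing $i=j=k$, and let the basis-independence of the top-degree form $u^*\alpha$ do the work: $|du(v)|^2$ is forced to be constant over unit vectors $v$, polarization gives $u^*h = \mu^2 g$, and the trace identity \eqref{eq:du-norm} pins down $\mu^2 = \lambda^2$, after which both equations of \eqref{eq:Smith-immersion-eq} drop out explicitly. Your route is slightly more self-contained — it never invokes the equality analysis of Theorem~\ref{thm:immersion-ineq}, and the off-diagonal orthogonality relations that the paper derives are subsumed by the polarization step — while the paper's route is more mechanical and parallels its submersion analogue (Proposition~\ref{prop:Smith-submersion-direct-prelim2}), where the same all-indices averaging reappears. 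Both arguments are valid for arbitrary calibrations, for the same underlying reason you identify: no cousin principle or vector cross product structure is needed for the converse direction.
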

\begin{proof}
Let $x \in L.$ If $du_x = 0$, which satisfies~\eqref{eq:Smith-imm-second-eqq} at $x$, then $u$ is a Smith immersion at $x$. Now assume $du_x \neq 0$. Let  $e_1, \dots, e_k$ be a oriented orthonormal basis of $T_x L$. Let $i, j \in \{ 1, \ldots, k \}$. Then we have
$$ \star_L e_i = (-1)^{i-1} e_1 \wedge \dots \wedge \widehat{e_i} \wedge \dots \wedge e_k.$$
Evaluating both sides of~\eqref{eq:Smith-imm-second-eqq} on $e_i$ and taking inner product with $du(e_j)$ we get
\begin{align*}
(-1)^{k-1} \lambda^{k-2} h( du(e_i), du(e_j) ) & = (-1)^{i-1} h( P_{\alpha} (du(e_1) \wedge \dots \wedge \widehat{du(e_i}) \wedge \dots \wedge du(e_k) ), du(e_j) ) \\
& = (-1)^{i-1} u^{*} \alpha(e_1 \wedge \dots \wedge \widehat{e_i} \wedge \dots \wedge e_k \wedge e_j) \\
& = (-1)^{k-1} u^{*} \alpha(e_1 \wedge \dots \wedge e_j \wedge \dots \wedge e_k).
\end{align*}
We deduce that
$$ \begin{cases} \quad \lambda^{k-2} h(du(e_i), du(e_j)) \vol_L = u^{*} \alpha & \text{if $i=j$}, \\ \quad h( du(e_i), du(e_j) ) = 0 & \text{if $i \neq j$}. \end{cases} $$
Using the above we compute
\begin{align*}
u^{*} \alpha & = \frac{1}{k} \lambda^{k-2} \sum_{i} h( du(e_i), du(e_i) ) \vol_L \\
& = \frac{1}{k} \lambda^{k-2} \sum_{i,j} h( du(e_i), du(e_j) ) \vol_L \\
& = \frac{1}{k} \lambda^{k-2} |du|^2 \vol_L = \lambda^{k} \vol_L,
\end{align*}
and thus $u$ is a Smith immersion in the sense of Definition~\ref{defn:Smith-immersion}.
\end{proof}

\begin{lemma}\label{lemma:Smith-immersion-direct-pre2}
Let $u \colon (L^k, g) \to (M^n, h)$ be a Smith immersion with respect to the calibration $\alpha$. Then $u^* (\nabla_V \alpha) = 0$ for any $V \in \Gamma(TM)$.
\end{lemma}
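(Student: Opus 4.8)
The plan is to exploit the basic extremal characterization of calibrated planes: among all unit decomposable $k$-vectors, a calibration $\alpha$ attains its maximal value $1$ exactly on the calibrated ones. Since parallel transport along a curve in $M$ carries a unit decomposable $k$-vector to another unit decomposable $k$-vector, moving the calibrated tangent plane of the image by parallel transport keeps us inside the region where $\alpha \leq 1$, so differentiating $\alpha$ there at the calibrated plane must vanish. This is the same maximum-principle mechanism underlying the first cousin principle (Lemma~\ref{lemma:cousin}), applied now in the base direction on $M$ rather than within a fixed tangent space.

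Concretely, I would first dispose of the critical set: at any $x$ with $du_x = 0$ the pullback $u^*(\nabla_V \alpha)$ vanishes trivially, so it suffices to work on $L^0$. Fix $x \in L^0$ and an oriented orthonormal basis $e_1, \dots, e_k$ of $T_x L$. Since a Smith immersion is weakly conformal with $u^* h = \lambda^2 g$, the vectors $f_i := \tfrac{1}{\lambda} du_x(e_i)$ form an orthonormal basis of the plane $\im du_x \subseteq T_{u(x)} M$, and by Theorem~\ref{thm:immersion-ineq} the decomposable unit $k$-vector $\Pi := f_1 \wedge \cdots \wedge f_k$ is $\alpha$-calibrated, i.e. $\alpha(\Pi) = 1$.

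Next, fix $V \in \Gamma(TM)$. Because $(\nabla_V \alpha)_{u(x)}$ depends only on $V_{u(x)}$, I would choose a curve $\gamma$ in $M$ with $\gamma(0) = u(x)$ and $\gamma'(0) = V_{u(x)}$, parallel transport each $f_i$ along $\gamma$ to obtain $f_i(t)$, and set $\Pi(t) := f_1(t) \wedge \cdots \wedge f_k(t)$. Parallel transport is a linear isometry, so the $f_i(t)$ remain orthonormal and $\Pi(t)$ is a unit decomposable $k$-vector for every $t$; hence the function $\phi(t) := \alpha_{\gamma(t)}(\Pi(t))$ satisfies $\phi(t) \leq 1$ for all $t$, with equality at $t = 0$. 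Therefore $\phi'(0) = 0$.

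Finally, I would differentiate $\phi$ using compatibility of the Levi-Civita connection with the tensorial pairing, together with $\nabla_{\gamma'} f_i = 0$: all the terms involving $\nabla_{\gamma'} \Pi$ drop out, leaving $\phi'(0) = (\nabla_V \alpha)(\Pi)$. Thus $(\nabla_V \alpha)(f_1, \dots, f_k) = 0$, and rescaling by $\lambda^k$ gives $(\nabla_V \alpha)(du_x(e_1), \dots, du_x(e_k)) = 0$, which is exactly $u^*(\nabla_V \alpha) = 0$ at $x$. The only genuinely technical point is the Leibniz computation identifying $\phi'(0)$ with $(\nabla_V \alpha)(\Pi)$; this is routine, so the substance of the argument is the geometric observation that a calibrated plane maximizes $\alpha$ along any parallel family of unit planes.
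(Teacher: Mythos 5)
Your proof is correct, and while it rests on the same underlying mechanism as the paper's (differentiating $\alpha$ on a moving orthonormal frame and exploiting maximality of $\alpha$ on calibrated planes), its execution is genuinely different. The paper also expands $(\nabla_V\alpha)$ on the frame $\tfrac{1}{\lambda}du(e_i)$ via the Leibniz rule, but then treats the two kinds of terms separately: the derivative term $V\big(\alpha(\tfrac{1}{\lambda}du(e_1)\wedge\cdots\wedge\tfrac{1}{\lambda}du(e_k))\big)$ is said to vanish because the frame spans a calibrated plane, and each correction term $\alpha(\cdots\wedge\nabla_V(\tfrac{1}{\lambda}du(e_j))\wedge\cdots)$ is killed by combining skew-symmetry, orthogonality of $\nabla_V(\tfrac{1}{\lambda}du(e_j))$ to $\tfrac{1}{\lambda}du(e_j)$, and the first cousin principle (Lemma~\ref{lemma:cousin}). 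Your parallel-transport extension makes every correction term vanish identically, so you never invoke Lemma~\ref{lemma:cousin} at all, and the single surviving term is handled by the interior-maximum argument for $\phi(t)=\alpha_{\gamma(t)}(\Pi(t))\leq 1$, $\phi(0)=1$. This buys two things. First, economy: you apply the variational trick once, directly, rather than routing part of it through the cousin lemma (whose own proof is that same trick). Second, it settles a point the paper glosses over: differentiating $\alpha(\tfrac{1}{\lambda}du(e_1)\wedge\cdots)$ in a direction $V$ that is transverse to the image requires extending the frame off the image, and that individual term is extension-dependent; the paper's one-line justification (``because $\alpha$ calibrates $du(T_xL)$'') really presupposes a norm-preserving extension, under which the vanishing is precisely your first-derivative-test step, not mere constancy along the image. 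Your choice of extension (parallel along a curve with velocity $V_{u(x)}$) resolves this cleanly, so as a proof of the lemma your version is tighter, at the cost of being less explicit than the paper about which components of the frame's derivative could have contributed.
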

\begin{proof}
The equation is trivially satisfied at points where $du$ is zero. Let $x \in L^0$. Let $e_1, \ldots, e_k$ be an oriented orthonormal basis for $T_x L$. Then from the proof of Theorem~\ref{thm:immersion-ineq}, we have that $\frac{1}{\lambda} du (e_1), \ldots, \frac{1}{\lambda} du (e_k)$ is an oriented orthonormal basis for $du(T_x L) \subseteq T_{u(x)} M$, which is calibrated by $\alpha$. Thus we have
\begin{align} \nonumber
\tfrac{1}{\lambda^k } u^* (\nabla_V \alpha) ( e_1\wedge \cdots \wedge e_k) & = u^* (\nabla_V \alpha) ( \tfrac{1}{\lambda} e_1 \wedge \cdots \wedge \tfrac{1}{\lambda} e_k) \\ \nonumber
& = V \big( \alpha (\tfrac{1}{\lambda} du (e_1)\wedge \cdots \wedge \tfrac{1}{\lambda} du (e_k)) \big) \\ \label{eq:Smith-immersion-direct-pre2-temp}
& \qquad {} - \sum_{j=1} \alpha (\tfrac{1}{\lambda} du (e_1) \wedge \cdots \wedge \nabla_V (\tfrac{1}{\lambda} du (e_j)) \wedge \cdots \wedge \tfrac{1}{\lambda} du (e_k) ).
\end{align}
The first term in~\eqref{eq:Smith-immersion-direct-pre2-temp} vanishes because $\alpha$ calibrates $du(T_x L)$. By skew-symmetry of $\alpha$, the only component of $\nabla_V (\frac{1}{\lambda} du (e_j))$ in the span of $\frac{1}{\lambda} du (e_1), \ldots, \frac{1}{\lambda} du (e_k)$ which can contribute to
$$ \alpha (\tfrac{1}{\lambda} du (e_1) \wedge \cdots \wedge \nabla_V (\tfrac{1}{\lambda} du (e_j)) \wedge \cdots \wedge \tfrac{1}{\lambda} du (e_k) ) $$
is the $\frac{1}{\lambda} du (e_j)$ component. But since $\frac{1}{\lambda} du (e_j)$ has constant (unit) length, the covariant derivative $\nabla_V (\frac{1}{\lambda} du (e_j))$ is orthogonal to $\frac{1}{\lambda} du (e_j)$. We deduce that
$$ \alpha (\tfrac{1}{\lambda} du (e_1) \wedge \cdots \wedge \nabla_V (\tfrac{1}{\lambda} du (e_j)) \wedge \cdots \wedge \tfrac{1}{\lambda} du (e_k) ) = \alpha (\tfrac{1}{\lambda} du (e_1) \wedge \cdots \wedge w \wedge \cdots \wedge \tfrac{1}{\lambda} du (e_k) ) $$
for some vector $w$ orthogonal to the $\alpha$-calibrated $k$-plane spanned by $\frac{1}{\lambda} du (e_1), \ldots, \frac{1}{\lambda} du (e_k)$. It then follows from Lemma~\ref{lemma:cousin} that each of the terms in the last line of~\eqref{eq:Smith-immersion-direct-pre2-temp} also vanish, so $u^* (\nabla_V \alpha) = 0$.
\end{proof}

The next result is exactly~\cite[Proposition 3.20]{CKM1}, but with a harmless sign error corrected. We include it for completeness and comparison with Theorem~\ref{thm:Smith-submersion-direct-harmonic} in the case of Smith submersions.

\begin{thm} \label{thm:Smith-immersion-direct-harmonic}
Let $u \colon (L^k, g) \to (M^n, h)$ be a Smith immersion with respect to the calibration form $\alpha \in \Omega^{k}$. If $d \alpha = 0$, then $u$ is $k$-harmonic in the sense that $\tau_k (u) = 0$.
\end{thm}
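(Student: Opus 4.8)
The plan is to show directly that the $k$-tension field $\tau_k(u) = \Div(|du|^{k-2}\,du)$ vanishes, using the two preliminary results just established: the $P_\alpha$-formulation of the Smith equation (Proposition~\ref{prop:Smith-immersion-direct-prelim}, equation~\eqref{eq:Smith-imm-P-version}) and the annihilation of covariant derivatives of $\alpha$ along the image (Lemma~\ref{lemma:Smith-immersion-direct-pre2}). The strategy is to rewrite $|du|^{k-2}\,du$ using~\eqref{eq:Smith-imm-P-version}, so that
\begin{equation*}
|du|^{k-2}\,du = (-1)^{k-1}(\sqrt{k})^{k-2}\, P_\alpha \circ \Lambda^{k-1}(du) \circ \star_L,
\end{equation*}
and then take the divergence of the right-hand side. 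The point of passing to the $P_\alpha$-version is that it packages the degenerate factor $|du|^{k-2}$ together with $du$ into an expression built from $\alpha$, and the divergence of such an expression can be computed by letting $\nabla$ fall on each factor via the Leibniz rule.

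First I would fix a point $x \in L^0$ and work in a local orthonormal frame $e_1,\dots,e_k$ that is \emph{geodesic at $x$} (so $\nabla_{e_i} e_j = 0$ at $x$), which also makes $\star_L$ parallel at $x$ and kills the terms where $\nabla$ hits the frame or the Hodge star. Then computing $\tau_k(u)$ at $x$ reduces to applying $\nabla_{e_i}$ (summed over $i$) to $P_\alpha\big(\Lambda^{k-1}(du)(\star_L e_i)\big)$. Expanding by the Leibniz rule, the covariant derivative distributes over three types of factors: the vector-valued form $P_\alpha$ (equivalently $\alpha$), the differentials $du(e_j)$ appearing in $\Lambda^{k-1}(du)$, and the already-eliminated frame/star data. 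The term in which $\nabla$ lands on $\alpha$ produces a contraction of $u^*(\nabla_{\text{\textbullet}}\alpha)$, which vanishes identically by Lemma~\ref{lemma:Smith-immersion-direct-pre2}. What remains is the sum of terms where $\nabla_{e_i}$ hits one of the $du(e_j)$, i.e.\ a sum involving the second fundamental form / the Hessian of $u$ contracted against $\alpha$.

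The main obstacle—and the crux of the argument—is showing that this remaining sum vanishes, and this is exactly where the hypothesis $d\alpha = 0$ enters. The surviving terms are of the form $\sum_{i} \alpha\big(\dots \wedge \nabla_{e_i}(du(e_j)) \wedge \dots\big)$ with an $e_i$ or $\star_L e_i$ in another slot, and the antisymmetrization built into the wedge, combined with the symmetry of the Hessian $\nabla_{e_i}du(e_j) = \nabla_{e_j}du(e_i)$ (which holds because $du$ is a closed $u^*TM$-valued $1$-form, the differential being torsion-free), should convert this contraction into one governed by $d\alpha$ pulled back along $u$. Concretely, the skew part in the two differentiated indices assembles $u^*(d\alpha)$ up to sign, so the assumption $d\alpha = 0$ forces the whole expression to vanish. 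I expect the bookkeeping of signs and of which slot carries $\star_L e_i$ to be the fiddly part; the conceptual content is that $\Div$ of the $P_\alpha$-expression splits into a $\nabla\alpha$ piece (killed by Lemma~\ref{lemma:Smith-immersion-direct-pre2}) and a $d\alpha$ piece (killed by closedness), leaving $\tau_k(u) = 0$. Since the identity is trivial on $L \setminus L^0$ where $du = 0$ and both sides vanish, and $\tau_k(u)$ is continuous, this completes the proof on all of $L$.
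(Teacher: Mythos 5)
Your overall skeleton matches the paper's: reduce to showing $\Div(B)=0$ for $B = P_\alpha \circ \Lambda^{k-1}(du)\circ \star_L$ via Proposition~\ref{prop:Smith-immersion-direct-prelim}, expand the divergence by the Leibniz rule in normal coordinates, and dispose of the resulting terms using Lemma~\ref{lemma:Smith-immersion-direct-pre2} and $d\alpha=0$. But you have assigned the two ingredients to the wrong terms, and this is a genuine gap, not just bookkeeping. The Hessian terms (where $\nabla_{e_i}$ hits a factor $du(e_j)$) vanish for free: the Hessian $\frac{\partial^2 u^{b_\ell}}{\partial x^j \partial x^{i_\ell}}$ is symmetric in $(j,i_\ell)$, while both of those indices are contracted into the skew components $\mu_{j i_1\cdots i_{k-1}}$ of the volume form of $L$ coming from $\star_L$, so each such term dies by symmetry alone --- closedness of $\alpha$ plays no role there. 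Indeed, your claim that these terms assemble into a pullback of $d\alpha$ along $u$ cannot be right: $u^*(d\alpha)$ is a $(k+1)$-form on the $k$-manifold $L$ and hence vanishes identically whether or not $d\alpha=0$, so if that were where closedness entered, the theorem would hold for arbitrary calibrations --- and it does not.

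Conversely, the term where $\nabla$ lands on $\alpha$ is \emph{not} killed by Lemma~\ref{lemma:Smith-immersion-direct-pre2} as you assert. That term has the index structure
\begin{equation*}
\frac{\partial u^{b_1}}{\partial x^{i_1}}\cdots\frac{\partial u^{b_{k-1}}}{\partial x^{i_{k-1}}}\,\frac{\partial u^{b_k}}{\partial x^{j}}\,(\nabla_{b_k}\alpha_{b_1\cdots b_{k-1} a})\,\mu_{j i_1\cdots i_{k-1}},
\end{equation*}
where $a$ is the free $u^*TM$-index of $\tau_k(u)$: only $k-1$ of the form slots of $\nabla\alpha$ are filled with image vectors $du(\cdot)$, and the remaining slot is free. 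Lemma~\ref{lemma:Smith-immersion-direct-pre2} states $u^*(\nabla_V\alpha)=0$, i.e.\ it applies only when \emph{all} $k$ form slots are filled with $du$'s; it says nothing when the free slot points in a direction normal to the image (the tangential component of $a$ is indeed handled by the lemma, but the normal component is not). This is exactly where $d\alpha=0$ must enter: after symmetrizing over which $b_\ell$ carries the derivative (allowed by the skew-symmetry of $\mu$), the identity $0=(d\alpha)_{a b_1\cdots b_k} = \nabla_a\alpha_{b_1\cdots b_k} - \sum_{\ell}\nabla_{b_\ell}\alpha_{b_1\cdots b_{\ell-1} a b_{\ell+1}\cdots b_k}$ trades the free index $a$ from a form slot into the derivative slot, producing a multiple of $(u^*\nabla_a\alpha)_{i_1\cdots i_k}\mu_{i_1\cdots i_k}$ --- and only then does Lemma~\ref{lemma:Smith-immersion-direct-pre2} finish the proof. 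As written, your argument never handles the normal component of the $\nabla\alpha$ term, which is the one place the closedness hypothesis is genuinely needed.
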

\begin{proof}
We show that the $k$-tension $\tau_k (u)$ of equation~\eqref{eq:p-tension} vanishes at any point $x \in L$. Let
$$ B = P_{\alpha} \circ \Lambda^{k-1} (du) \circ \star_L \in \Gamma (T^* L \otimes u^* TM). $$
By Proposition~\ref{prop:Smith-immersion-direct-prelim}, it suffices to show that $\Div(B) = 0$, which is a smooth section of $u^* TM$. Let $\mu$ denote the Riemannian volume form on $L$, and identify $1$-forms and vector fields using the musical isomorphism. Recall that $\star v = v \hk \mu$ for any vector field $v$ on $L$, so $(\star v)_{i_1 \cdots i_{k-1}} = v_j \mu_{j i_1 \cdots i_{k-1}}$. We also have $(P_{\alpha})_{b_1 \cdots b_{k-1} a} = \alpha_{b_1 \cdots b_{k-1} a}$.

Take Riemannian normal coordinates $\frac{\partial}{\partial x^i}, \frac{\partial}{\partial y^a}$ centred at $x$ and $u(x)$ respectively. At the point $x$, we compute
\begin{align*}
\Div(B)_{a} & = (\nabla_j B)_{a j} \\
& = \nabla_j (P_{\alpha} \circ \Lambda^{k-1} (du) \circ \star_L)_{aj} \\
& = \frac{1}{(k-1)!} \nabla_j (P_{\alpha} \circ \Lambda^{k-1}(du))_{i_1 \cdots i_{k-1} a} \mu_{j i_1 \cdots i_{k-1}} \\
& = \frac{1}{(k-1)!} \nabla_j \left( \frac{\partial u^{b_1}}{\partial x^{i_1}} \cdots \frac{\partial u^{b_{k-1}}}{\partial x^{i_{k-1}}} (P_{\alpha})_{b_1 \cdots b_{k-1} a} \right) \mu_{j i_1 \cdots i_{k-1} } \\
& = \frac{1}{(k-1)!} \nabla_j \left( \frac{\partial u^{b_1}}{\partial x^{i_1}} \cdots \frac{\partial u^{b_{k-1}}}{\partial x^{i_{k-1}}} \alpha_{b_1 \cdots b_{k-1} a} \right) \mu_{j i_1 \cdots i_{k-1}} \\
& = \frac{1}{(k-1)!} \frac{\partial u^{b_1}}{\partial x^{i_1}} \cdots \frac{\partial u^{b_{k-1}}}{\partial x^{i_{k-1}}} (\nabla_j \alpha_{b_1 \cdots b_{k-1} a}) \mu_{j i_1 \cdots i_{k-1}} \\
& \qquad {} + \frac{1}{(k-1)!} \sum_{\ell=1}^{k-1} \frac{\partial^2 u^{b_{\ell}}}{\partial x^j \partial x^{i_{\ell}}} \frac{\partial u^{b_1}}{\partial x^{i_1}} \cdots \widehat{\frac{\partial u^{b_{\ell}}}{\partial x^{i_{\ell}}}} \cdots \frac{\partial u^{b_{k-1}}}{\partial x^{i_{k-1}}} \alpha_{b_1 \cdots b_{k-1} a} \mu_{j i_1 \cdots i_{k-1}},
\end{align*}
where the $\widehat{\phantom{a}}$ as usual denotes omission. The second term vanishes by (skew)-symmetry in $j, i_{\ell}$. For the first term, we have
$$ \nabla_j \alpha = \nabla_{\frac{\partial}{\partial x^{j}}} \alpha = \frac{\partial u^{b_k}}{\partial x^j} \nabla_{\frac{\partial}{\partial y^{b_k}}} \alpha, $$
which we write as $\frac{\partial u^{b_k}}{\partial x^j} \nabla_{b_k} \alpha$. Thus we have
$$ \Div(B)_{a} = \frac{1}{(k-1)!}\frac{\partial u^{b_1}}{\partial x^{i_1}} \cdots \frac{\partial u^{b_{k-1}}}{\partial x^{i_{k-1}}}\frac{\partial u^{b_k}}{\partial x^j} (\nabla_{b_k} \alpha_{b_1 \cdots b_{k-1} a}) \mu_{j i_1 \cdots i_{k-1}}. $$
Relabelling $j$ as $i_k$, we have
$$ \Div(B)_{a} = \frac{(-1)^{k-1}}{(k-1)!} \frac{\partial u^{b_1}}{\partial x^{i_1}} \cdots \frac{\partial u^{b_k}}{\partial x^{i_k}} (\nabla_{b_k} \alpha_{b_1 \cdots b_{k-1} a}) \mu_{i_1 \cdots i_k}. $$
By the skew-symmetry of $\mu$, if we swap $b_{\ell}$ and $b_m$ in the factor $(\nabla_{b_k} \alpha_{b_1 \cdots b_{k-1} a})$ above, the sign of the right hand side changes. We therefore can write
$$ \Div(B)_{a} = \frac{(-1)^{k-1}}{(k-1)!} \frac{\partial u^{b_1}}{\partial x^{i_1}} \cdots \frac{\partial u^{b_k}}{\partial x^{i_k}} \frac{1}{k} \sum_{\ell = 1}^k (\nabla_{b_{\ell}} \alpha_{b_1 \cdots b_{\ell-1} a b_{\ell+1} \cdots b_{k}}) \mu_{i_1 \cdots i_k} $$
because for each $\ell$ when we swap $a$ with $b_k$ and then $b_k$ with $b_{\ell}$ we introduce two minus signs which cancel. Finally we use the fact that $\alpha$ is closed to write
$$ 0 = (d\alpha)_{ab_1 \cdots b_k} = \nabla_a \alpha_{b_1 \cdots b_k} - \sum_{\ell = 1}^k (\nabla_{b_{\ell}} \alpha_{b_1 \cdots b_{\ell-1} a b_{\ell+1} \cdots b_{k}}). $$
Combining these we obtain
\begin{align*}
\Div(B)_{a} & = \frac{(-1)^{k-1}}{k!} \frac{\partial u^{b_1}}{\partial x^{i_1}} \cdots \frac{\partial u^{b_k}}{\partial x^{i_k}} \nabla_a \alpha_{b_1 \cdots b_k} \mu_{i_1 \cdots i_k }\\
& = (u^* \nabla_a \alpha)_{i_1 \cdots i_k} \mu_{i_1 \cdots i_k},
\end{align*}
which vanishes by Lemma~\ref{lemma:Smith-immersion-direct-pre2}, completing the proof.
\end{proof}

\section{Smith submersions} \label{sec:Smith-submersions}

We introduce a new class of maps $u \colon (M^n, h) \to (L^k, g)$ between Riemannian manifolds with $n \geq k$, where the \emph{domain} is equipped with a calibration form $\alpha$ of degree $n-k$. These maps are a special class of $k$-harmonic maps satisfying a first order nonlinear differential equation, and have the property that when $d\alpha = 0$, the \emph{smooth fibres} are $\alpha$-calibrated submanifolds of $M$.

In this section, $u \colon (M^n, h) \to (L^k, g)$ is a \emph{surjective} smooth map between Riemannian manifolds, with $n \geq k$. Recall that $u \colon (M^n, h) \to (L^k, g)$ is a \emph{submersion} if $\rank(du_x) = k$ for all $x \in M$.

\subsection{(Weakly) conformally horizontal submersions} \label{sec:weakly-conformally-horizontal}

In order to be able to define the submersion analogue of ``weakly conformal'', we need to first recall the horizontal/vertical splitting of $TM$ associated to a submersion $u \colon M \to L$.

\begin{defn} \label{defn:horizontal/vertical}
Let $u \colon (M^n, h) \to (L^k, g)$ be a smooth surjection. Let $M^{0} \subseteq M$ be the open set where $|du| \neq 0$. Suppose that the restriction $u|_{M^0} \colon M^0 \to L$ is a submersion, so that $\rank(du_x) = k$ for all $x \in M^0$. Then the tangent bundle $TM^{0}$ of $M^0$ decomposes as
$$ TM^{0} = (\ker du) \oplus_{\perp} (\ker du)^{\perp}, $$
where $\ker du = VM^0$ is the \emph{vertical} subbundle, which has rank $n-k$, and $(\ker du)^{\perp} = HM^0$ is the \emph{horizontal} subbundle, which has rank $k$.

It follows that an $m$-tensor $\alpha \in \cT^m$ on $M^0$ is a smooth section of
$$ \bigoplus_{p+q = m} ( \ker du )^{\otimes p} \otimes (( \ker du)^{\perp})^{\otimes q}, $$
with $p \leq n-k, q \leq k$. We denote by $\alpha^{(p,q)}$ the component of $\alpha$ which lies in
$$ \cT^{(p,q)} \coloneqq \Gamma( (\ker du)^{\otimes p} \otimes ((\ker du)^{\perp})^{\otimes q} ) $$
and we say that $\alpha^{(p,q)}$ is of \emph{type} $(p,q)$.

It follows that the metric $h$ on $M^0$ decomposes as $h = h^{2,0} + h^{0,2}$, where $h^{2,0}$ is the metric on the vertical subbundle $\ker du$, and $h^{0,2}$ is the metric on the horizontal subbundle $(\ker du)^{\perp}$. In particular, we have
\begin{equation} \label{eq:tr-h02}
\tr_h (h^{0,2}) = k.
\end{equation}
Finally, we use $\Omega^{(p,q)}$ to denote the totally skew-symmetric elements of $\cT^{(p,q)}$.
\end{defn}

\begin{defn} \label{defn:weakly-horizontally-conformal-submersion}
A smooth surjection $u \colon (M^n, h) \to (L^k, g)$ is called \emph{(weakly) horizontally conformal} if for every point $x \in M$, we either have $du_x = 0$, or if $du_x \neq 0$, then $\rank (du_x) = k$ is maximal and
$$ u^* g = \lambda^2 h^{(0,2)} $$
for some smooth function $\lambda > 0$ on $M^0$. We can extend $\lambda^2$ by zero to obtain a continuous non-negative function on $M$. This function $\lambda$ is called the dilation. It then follows from~\eqref{eq:du-norm} that necessarily $\lambda^2 = \frac{1}{k} |du|^2$.

When $M^0 = M$, we say that $u$ is a \emph{horizontally conformal submersion}. A submersion $u \colon (M^n, h) \to (L^k, g)$ is called a \emph{Riemannian submersion} if $u^* g = h^{(0,2)}$ on $M$, or equivalently if it is a horizontally conformal submersion with dilation $\lambda = 1$.
\end{defn}

\begin{rmk} \label{rmk:smooth-fibres}
Let $u \colon (M^n, h) \to (L^k, g)$ be weakly horizontally conformal. Restricted to the open subset $M^0$, the map $u|_{M^0}$ is a submersion, and thus by the implicit function theorem each fibre $M^0 \cap u^{-1} \{ u(x) \}$ for $x \in M^0$ is a smooth $(n-k)$-dimensional submanifold of $M^0$.
\end{rmk}

\begin{rmk} \label{rmk:isometry}
Let $u \colon (M^n, h) \to (L^k, g)$ be a smooth surjection. Over $M^0$ we get a canonical orientation on the horizontal subbundle $(\ker du)^{\perp}$ from the class $[u^* \vol_L]$. Then the vertical subbundle $\ker du$ inherits a unique orientation such that $\vol_{\ker du} \wedge \vol_{(\ker du)^{\perp}} = \vol_M$.

If $u$ is (weakly) horizontally conformal, then by Definition~\ref{defn:weakly-horizontally-conformal-submersion}, we have that for any $x \in M^0$, the map
$$ (du)_x \colon \big( ( \ker du_x)^{\perp}, \lambda^2(x) h^{(0,2)}_x \big) \cong \big( T_{u(x)} L, g_{u(x)} \big)$$
is an orientation preserving isometry.
\end{rmk}

For the remainder of this section, we assume that $u \colon (M^n, h) \to (L^k, g)$ is horizontally conformal. (Equivalently, it is weakly conformally horizontal and we work only on the open subset $M^0$ where it is horizontally conformal.) We collect several results that are needed to study Smith submersions.

\begin{lemma} \label{lemma:pullbacks-are-horizontal}
Let $\beta \in \Omega^p (L)$. Then $u^*\beta$ is of type $(0,p)$.
\end{lemma}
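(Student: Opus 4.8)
The statement to prove is Lemma~\ref{lemma:pullbacks-are-horizontal}: for $\beta \in \Omega^p(L)$, the pullback $u^*\beta$ is of type $(0,p)$, meaning it lies entirely in the horizontal summand with no vertical legs. Let me think about what this requires.

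Let me reconstruct the definitions. We have a horizontally conformal submersion $u \colon (M^n,h) \to (L^k,g)$ on $M^0$. The tangent bundle splits as $TM^0 = (\ker du) \oplus_\perp (\ker du)^\perp$, vertical plus horizontal. A $p$-form on $M^0$ has a type decomposition, and "$(0,p)$" means all $p$ arguments are fed from the horizontal bundle $(\ker du)^\perp$ — equivalently, $u^*\beta$ vanishes whenever any one of its arguments is vertical.

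\textbf{Strategy.} The plan is to show that $u^*\beta$ annihilates any tangent vector lying in the vertical subbundle $\ker du$, and then to observe that this vanishing condition is exactly what it means for a $p$-form on $M^0$ to be of type $(0,p)$. Indeed, by the type decomposition in Definition~\ref{defn:horizontal/vertical}, a $p$-form is of type $(0,p)$ precisely when it has no vertical legs. Because the splitting $TM^0 = (\ker du) \oplus_{\perp} (\ker du)^{\perp}$ is orthogonal, the metric $h$ is block-diagonal with respect to it, so a covector annihilates $\ker du$ if and only if its metric dual lies in $(\ker du)^{\perp}$. Hence a form expressed in a local frame using only horizontal covectors is the same as a form of type $(0,p)$, and this in turn is equivalent to the form vanishing whenever one of its arguments is vertical.

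\textbf{Key step.} First I would fix any point $x \in M^0$ and vectors $v_1, \ldots, v_p \in T_x M^0$, and unwind the definition of the pullback:
$$ (u^*\beta)_x (v_1, \ldots, v_p) = \beta_{u(x)}\big( du_x(v_1), \ldots, du_x(v_p) \big). $$
If some $v_i$ is vertical, that is $v_i \in \ker du_x$, then $du_x(v_i) = 0$ by the very definition of the vertical subbundle, and multilinearity of $\beta$ forces the right-hand side to vanish. Thus $u^*\beta$ vanishes as soon as any one of its arguments is vertical.

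\textbf{Main obstacle.} There is essentially no analytic obstacle here; the content is purely in unwinding the definitions. The only point that requires a moment of care is the identification of ``vanishes on vertical inputs'' with ``type $(0,p)$'', but this follows immediately from the orthogonality of the horizontal/vertical splitting and the resulting block-diagonal form of $h$, as noted above. Combining this identification with the key computation yields that $u^*\beta$ has trivial $\beta^{(p',q')}$ component for every $p' \geq 1$, hence is of type $(0,p)$, completing the proof.
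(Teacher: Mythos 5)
Your proposal is correct and follows exactly the same argument as the paper: unwind the definition of the pullback, note that $du$ kills vertical vectors, and conclude that $u^*\beta$ vanishes whenever any argument lies in $\ker du$, which is precisely the condition of being type $(0,p)$. The paper states this in two lines and leaves implicit the identification of ``vanishes on vertical inputs'' with ``type $(0,p)$,'' which you spell out via the orthogonal splitting; this is a harmless elaboration, not a different route.
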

\begin{proof}
Let $v_1, \ldots, v_p \in \Gamma(TM)$. Then $(u^*\beta) (v_1, \ldots, v_p) = \beta (du(v_1), \ldots, du(v_p))$, so if at least one of the $v_i$ lies in $ \ker du$, then $(u^*\beta) (v_1, \ldots, v_p) = 0$.
\end{proof}

\begin{lemma} \label{lemma:star-pq}
Let $\alpha \in \Omega^{(p,q)} (M)$. Then $\star \alpha \in \Omega^{(n-k-p, k-q)} (M)$. Moreover, for any form $\beta$, we have $(\star \beta)^{(n-k-p, k-q)} = \star(\beta^{(p,q)})$.
\end{lemma}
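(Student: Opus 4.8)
The plan is to reduce everything to a pointwise, purely linear-algebraic computation in a single fibre $T_x^* M^0$, using an orthonormal coframe adapted to the vertical/horizontal splitting together with the elementary behaviour of $\star$ on coframe monomials. Since $\star$ is a pointwise operation, it suffices to fix $x \in M^0$ and argue on $\Lambda^\bullet(T_x^* M^0)$.

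First I would choose an oriented orthonormal frame $f_1, \ldots, f_{n-k}$ for the vertical subspace $\ker du_x$ and $e_1, \ldots, e_k$ for the horizontal subspace $(\ker du_x)^\perp$, so that $f_1, \ldots, f_{n-k}, e_1, \ldots, e_k$ is an oriented orthonormal basis of $T_x M^0$, with dual coframe $\{ f^a \}, \{ e^b \}$. By Definition~\ref{defn:horizontal/vertical}, the type-$(p,q)$ subspace of $\Lambda^{p+q}(T_x^* M^0)$ is spanned by the monomials
$$ f^{a_1} \wedge \cdots \wedge f^{a_p} \wedge e^{b_1} \wedge \cdots \wedge e^{b_q}, \qquad 1 \le a_1 < \cdots < a_p \le n-k, \quad 1 \le b_1 < \cdots < b_q \le k, $$
that is, exactly the orthonormal coframe monomials using $p$ of the vertical covectors and $q$ of the horizontal covectors. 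The key input is then the standard fact that for an oriented orthonormal coframe the Hodge star of such a monomial $\theta^I$ (indexed by a subset $I$ of the coframe) equals $\pm \theta^{I^c}$, the wedge of the complementary coframe elements, with a sign fixed by the orientation. Applied to the monomial above, the complementary index set consists of the $n-k-p$ unused vertical covectors and the $k-q$ unused horizontal covectors, so its image under $\star$ is of type $(n-k-p, k-q)$. By linearity of $\star$ and since these monomials span the type-$(p,q)$ subspace, we conclude $\star \alpha \in \Omega^{(n-k-p,k-q)}(M)$, which is the first assertion.

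For the second assertion I would decompose an arbitrary form as $\beta = \sum_{p',q'} \beta^{(p',q')}$ and apply $\star$ termwise, giving $\star \beta = \sum_{p',q'} \star(\beta^{(p',q')})$, where by the first part each summand $\star(\beta^{(p',q')})$ has type $(n-k-p', k-q')$. The assignment $(p',q') \mapsto (n-k-p', k-q')$ is injective, so the summands land in pairwise distinct type-components of the direct-sum decomposition of $\star \beta$. Reading off the component of type $(n-k-p, k-q)$ therefore isolates the single term $\star(\beta^{(p,q)})$, which yields $(\star \beta)^{(n-k-p,k-q)} = \star(\beta^{(p,q)})$.

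I do not expect any genuine obstacle here: the argument is linear algebra in each fibre, and the only points requiring care are the bookkeeping of how the bidegree transforms under complementation of the coframe index set, and the injectivity of $(p,q) \mapsto (n-k-p, k-q)$ that lets the second statement be deduced cleanly from the first. The orientation-dependent signs in $\star \theta^I = \pm \theta^{I^c}$ play no role, since only the \emph{type} of the image is relevant.
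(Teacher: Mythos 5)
Your proof is correct and is essentially the paper's argument written out in full: the paper's one-line proof simply cites the fact that $\vol_M \in \Omega^{(n-k,k)}(M)$, which is exactly what your adapted-coframe computation makes explicit, since $\star$ sends a coframe monomial with $p$ vertical and $q$ horizontal factors to (a sign times) the complementary monomial, where the complement is taken inside the $n-k$ vertical and $k$ horizontal coframe elements whose wedge is $\vol_M$. Your handling of the second assertion --- decomposing $\beta$ by type, applying $\star$ termwise, and invoking the injectivity of $(p',q') \mapsto (n-k-p',k-q')$ to read off the relevant component --- is also a correct and clean way to finish, matching what the paper leaves implicit.
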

\begin{proof}
This follows from the fact that $\vol_M \in \Omega^{(n-k, k)} (M)$.
\end{proof}

\begin{lemma} \label{lemma:hook-type}
Let $\alpha \in \Omega^{(p,q)} (M)$. Then for any $v \in \Gamma(TM)$, the form $v^{(1,0)} \hk \alpha$ is of type $(p-1, q)$ and the form $v^{(0,1)} \hk \alpha$ is of type $(p, q-1)$.
\end{lemma}
\begin{proof}
This is clear from definition of the interior product.
\end{proof}

\begin{lemma} \label{lemma:PP-tr}
Let $\alpha \in \Omega^{(0,k)} (M)$. Let $P_{\alpha}$ be as in Definition~\ref{defn:P-from-alpha}, and let $P_{\alpha}^{\top}$ be its adjoint map as in Proposition~\ref{prop:P-adjoint}. Then we have
$$ P_{\alpha} P_{\alpha}^{\top} = |\alpha|^2 \pi^{(0,1)}, $$
where $\pi^{(0,1)} \colon \Gamma(TM) \to \Gamma(TM^{(0,1)})$ is the orthogonal projection.
\end{lemma}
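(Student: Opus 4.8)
The plan is to verify this operator identity by testing it against vector fields and reducing, via Proposition~\ref{prop:P-adjoint}, to a statement about the interior product with $\alpha$ alone. For $v, w \in \Gamma(TM)$, adjointness of $P_\alpha^{\top}$ together with $P_\alpha^{\top}(v) = (-1)^{k-1} v \hk \alpha$ gives
$$ g(P_\alpha P_\alpha^{\top}(v), w) = g(P_\alpha^{\top}(v), P_\alpha^{\top}(w)) = g(v \hk \alpha, w \hk \alpha), $$
since the sign $(-1)^{k-1}$ squares to $1$. Hence it suffices to prove that $g(v \hk \alpha, w \hk \alpha) = |\alpha|^2 \, g(\pi^{(0,1)}(v), w)$ for all $v, w$, after which the claim follows by nondegeneracy of $g$.

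Next I would exploit the type constraint, working throughout on $M^0$ where the horizontal/vertical splitting and $\pi^{(0,1)}$ are defined. Because $\alpha \in \Omega^{(0,k)}(M)$ and the horizontal subbundle $(\ker du)^{\perp}$ has rank exactly $k$, the fibre $\Omega^{(0,k)}$ is one-dimensional. Thus in an adapted local orthonormal frame $f_1, \ldots, f_{n-k}$ of $\ker du$ and $e_1, \ldots, e_k$ of $(\ker du)^{\perp}$ we may write $\alpha = c \, e^1 \wedge \cdots \wedge e^k$, a scalar multiple of the horizontal volume form, with $c^2 = |\alpha|^2$. This is the step where the degree hypothesis $\deg \alpha = k = \rank (\ker du)^{\perp}$ is essential.

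Then I would split into horizontal and vertical parts. Any vertical leg is annihilated: if $v \in \ker du$ then $v \hk \alpha = 0$, matching the fact that $\pi^{(0,1)}$ kills the vertical part, so both sides reduce to horizontal components and I may take $v, w$ horizontal. For horizontal vectors, $v \hk \alpha = c \, (v \hk (e^1 \wedge \cdots \wedge e^k))$, and the key observation is that contraction with the top horizontal form is (up to sign) the Hodge star of the $k$-dimensional horizontal space, hence an isometry: $e_i \hk (e^1 \wedge \cdots \wedge e^k) = (-1)^{i-1} e^1 \wedge \cdots \widehat{e^i} \cdots \wedge e^k$, and these $k$ forms are an orthonormal basis of $\Lambda^{k-1}$ of the horizontal space. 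Consequently $g(v \hk \alpha, w \hk \alpha) = c^2 \, g(v, w) = |\alpha|^2 \, g(\pi^{(0,1)}(v), w)$ for horizontal $v, w$, completing the reduction.

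This is a routine lemma rather than a hard one, so there is no real obstacle; the only point requiring care is the bookkeeping just described, namely that rank-$k$ horizontality forces $\alpha$ to be a scalar multiple of the full horizontal volume form, and that contraction with this top form preserves inner products. One could equivalently run the computation in indices, setting $T_{jl} := \tfrac{1}{(k-1)!} \alpha_{j a_1 \cdots a_{k-1}} \alpha_{l a_1 \cdots a_{k-1}}$ and checking that $T$ vanishes whenever an index is vertical and equals $|\alpha|^2 \delta_{jl}$ on horizontal indices, but the adapted-frame argument makes the underlying isometry transparent and keeps the signs under control.
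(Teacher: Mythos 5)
Your proof is correct, but it takes a genuinely different route from the paper's. The paper stays frame-free: it first notes that, because $\alpha$ is of type $(0,k)$ and the metric respects the splitting, $P_{\alpha}$ takes values in the horizontal subbundle, so it suffices to pair against horizontal $w$; it then computes $g(P_{\alpha} P_{\alpha}^{\top} v, w) = g(\alpha, w \wedge (v \hk \alpha))$ directly from Definition~\ref{defn:P-from-alpha} and Proposition~\ref{prop:P-adjoint}, and closes the argument with the derivation identity $v \hk (w \wedge \alpha) = g(v,w)\,\alpha - w \wedge (v \hk \alpha)$ combined with the vanishing $w \wedge \alpha = 0$ (a form of type $(0,k+1)$, which is zero precisely because the horizontal rank equals $k$), yielding $w \wedge (v \hk \alpha) = g(v,w)\,\alpha$ and hence $g(v,w)\,|\alpha|^2$. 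You instead symmetrize first, rewriting $g(P_{\alpha} P_{\alpha}^{\top} v, w) = g(P_{\alpha}^{\top} v, P_{\alpha}^{\top} w) = g(v \hk \alpha, w \hk \alpha)$, and then verify the resulting bilinear identity in an adapted orthonormal frame, using that $\Omega^{(0,k)}$ has one-dimensional fibres so $\alpha = c\, e^1 \wedge \cdots \wedge e^k$ with $c^2 = |\alpha|^2$, and that contraction with this top horizontal form sends the horizontal frame to signed orthonormal $(k-1)$-forms. Both arguments rest on the same two ingredients, Proposition~\ref{prop:P-adjoint} and the coincidence $\deg \alpha = \operatorname{rank} (\ker du)^{\perp} = k$, but they exploit the latter differently: the paper through $w \wedge \alpha = 0$, you through the line-bundle structure of the horizontal top exterior power. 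The paper's version is shorter and avoids frames; yours makes the geometric content more transparent (up to scale, $P_{\alpha}^{\top}$ is the horizontal Hodge star, so $P_{\alpha} P_{\alpha}^{\top} = |\alpha|^2 \pi^{(0,1)}$ becomes visibly an isometry statement), at the cost of a little more bookkeeping, all of which you handle correctly, including the vertical-leg cases, which implicitly use orthogonality of the splitting.
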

\begin{proof}
First, note that since $\alpha$ is of type $(0,k)$, and the metric $h$ on $TM$ is of type $(2,0) + (0,2)$, the map $P_{\alpha}$ takes values in the horizontal subbundle $TM^{(0,1)} = (\ker du)^{\perp}$. Consider any $v \in \Gamma(TM)$ and $w \in \Gamma(TM^{(0,1)})$. By Proposition~\ref{prop:P-adjoint} we have $P_{\alpha}^{\top} v = (-1)^{k-1} v \hk \alpha$. Hence we have
\begin{align*}
g (P_{\alpha} P_{\alpha}^{\top} v, w) & = (-1)^{k-1}  g(P (v \hk \alpha), w) \\
& = (-1)^{k-1} \alpha( (v \hk \alpha) \wedge w) \\
& = g (\alpha, w\wedge (v \hk \alpha)).
\end{align*}
Recall that $v \hk (w \wedge \alpha) = (v \hk w) \alpha - w \wedge (v \hk \alpha)$, and thus $w \wedge (v \hk \alpha) = g(v, w) \alpha$ because $w \wedge \alpha = 0$ since it is of type $(0,k+1)$. Hence, we get
$$ g (P_{\alpha} P_{\alpha}^{\top} v, w) = g(v, w) |\alpha|^2, $$
and the result follows.
\end{proof}

\subsection{Smith submersions and the energy inequality} \label{sec:Smith-sub-energy}

We can now consider the notion of a Smith submersion.

\begin{thm} \label{thm:submersion-ineq}
Let $u \colon (M^n, h) \to (L^k, g)$ be a smooth surjection. Let $\alpha \in \Omega^{n-k} (M)$ be a calibration. Then
\begin{equation} \label{eq:submersion-ineq}
\alpha \wedge u^* \vol_L \leq \lambda^k \vol_M, \quad \text{where $\lambda = \frac{|du|}{\sqrt{k}}$}.
\end{equation}
Moreover, equality holds if and only if:
\begin{itemize} \setlength\itemsep{-1mm}
\item $u^* g = \lambda^2 h^{(0,2)}$ (so $u$ is a weakly horizontally conformal submersion) and,
\item the fibres of the restriction of $u$ to $M^0$ are calibrated with respect to $\alpha$.
\end{itemize}
\end{thm}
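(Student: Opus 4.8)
The plan is to mirror the structure of the proof of Theorem~\ref{thm:immersion-ineq}, working pointwise and reducing to linear algebra at each $x \in M$. First I would dispose of the trivial case: at points where $du_x = 0$, both sides of~\eqref{eq:submersion-ineq} vanish (since $u^* \vol_L$ vanishes there and $\lambda = 0$), so equality holds trivially. Thus I would fix $x \in M^0$, where $\rank(du_x) = k$ is maximal, and work with the horizontal/vertical splitting $T_x M = V_x \oplus_\perp H_x$ from Definition~\ref{defn:horizontal/vertical}, with $\dim V_x = n-k$ and $\dim H_x = k$.

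The key computational step is to evaluate $\alpha \wedge u^* \vol_L$ on an oriented orthonormal basis of $T_x M$ adapted to the splitting. I would choose an oriented orthonormal basis $f_1, \ldots, f_{n-k}$ of $V_x$ and $\tilde e_1, \ldots, \tilde e_k$ of $H_x$, with the orientations as fixed in Remark~\ref{rmk:isometry}, so that $f_1 \wedge \cdots \wedge f_{n-k} \wedge \tilde e_1 \wedge \cdots \wedge \tilde e_k = (\vol_M)_x$. Since $u^* \vol_L$ is of type $(0,k)$ by Lemma~\ref{lemma:pullbacks-are-horizontal}, the only contribution to $(\alpha \wedge u^* \vol_L)$ evaluated on the full basis comes from pairing $\alpha$ against the vertical vectors $f_1 \wedge \cdots \wedge f_{n-k}$ and $u^* \vol_L$ against the horizontal vectors. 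Concretely, I would show
$$
(\alpha \wedge u^* \vol_L)(f_1 \wedge \cdots \wedge f_{n-k} \wedge \tilde e_1 \wedge \cdots \wedge \tilde e_k) = \alpha(f_1 \wedge \cdots \wedge f_{n-k}) \cdot (u^* \vol_L)(\tilde e_1 \wedge \cdots \wedge \tilde e_k).
$$
Then I would apply Hadamard's inequality (Proposition~\ref{prop:Hadamard}) to the restriction $du_x|_{H_x} \colon H_x \to T_{u(x)} L$ to bound $(u^* \vol_L)(\tilde e_1 \wedge \cdots \wedge \tilde e_k) = |\Lambda^k(du|_{H_x})| \leq \lambda^k$, and the calibration property~\eqref{eq:calibration} to bound $\alpha(f_1 \wedge \cdots \wedge f_{n-k}) \leq 1$. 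Multiplying gives the inequality~\eqref{eq:submersion-ineq}.

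For the equality clause, I would trace back through the two inequalities. Equality in the Hadamard bound forces $(du_x|_{H_x})^* g = \lambda^2 h^{(0,2)}_x$ with $\lambda^2 = \tfrac{1}{k}|du_x|^2$, which is precisely the weakly horizontally conformal condition $u^* g = \lambda^2 h^{(0,2)}$; I would note that $|du_x|$ only sees the horizontal part since $du$ kills $V_x$. Equality in the calibration bound forces $\alpha(f_1 \wedge \cdots \wedge f_{n-k}) = 1$, meaning the vertical subspace $V_x = \ker du_x$ is $\alpha$-calibrated; since $V_x$ is exactly the tangent space to the fibre through $x$ (by Remark~\ref{rmk:smooth-fibres}), this says the fibres are $\alpha$-calibrated. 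The converse direction just runs these equalities backwards.

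The main subtlety I expect to handle carefully is the orientation bookkeeping in the wedge product decomposition and the correct identification of the volume-form factor: one must check that the induced orientation on $V_x$ (from Remark~\ref{rmk:isometry}) is the one making $\alpha(f_1 \wedge \cdots \wedge f_{n-k})$ positive at the calibrated locus, and that no stray sign appears when splitting $(\alpha \wedge u^* \vol_L)$ across the two factors. This is routine but is the place where a sign error would most naturally creep in, so it deserves explicit verification rather than a hand-wave.
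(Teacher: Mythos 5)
Your proposal follows essentially the same route as the paper's proof: dispose of the points where $du_x = 0$, split $T_x M$ orthogonally into vertical and horizontal subspaces, use Lemma~\ref{lemma:pullbacks-are-horizontal} to factor $\alpha \wedge u^* \vol_L$ evaluated on an adapted oriented orthonormal basis as $\alpha$ on the vertical vectors times $u^* \vol_L$ on the horizontal ones, then bound the two factors by the calibration inequality and Hadamard's inequality respectively, and trace equality back through both. However, there is one genuine gap: you assert that at every $x \in M^0$ the rank of $du_x$ equals $k$. This is false for a general smooth surjection. The set $M^0$ is only where $du_x \neq 0$, and there may be points with $0 < \rank(du_x) < k$. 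At such points the splitting you use, with $\dim H_x = k$ and $\dim V_x = n-k$, does not exist (the kernel is too large), so your pointwise argument says nothing there and the inequality~\eqref{eq:submersion-ineq} remains unproven on that set.

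The fix is short, and it is exactly what the paper does: if $0 < \rank(du_x) < k$, then $\Lambda^k du_x = 0$, hence $u^* \vol_L$ vanishes at $x$ and the left-hand side of~\eqref{eq:submersion-ineq} is zero, while $\lambda > 0$ makes the right-hand side strictly positive; thus the inequality holds, and is in fact \emph{strict}, at such points. This case also matters for the equality clause: the strictness is what forces a map attaining equality to have, at every point, either $du_x = 0$ or $\rank(du_x) = k$, which is built into the definition of weakly horizontally conformal (Definition~\ref{defn:weakly-horizontally-conformal-submersion}); without it you cannot legitimately invoke the horizontal/vertical splitting at every point of $M^0$ in the equality analysis. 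Aside from this omission, your argument, including the orientation bookkeeping via Remark~\ref{rmk:isometry}, matches the paper's proof.
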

\begin{proof}
We trivially have equality at points where $du$ is zero. Let $x \in M^0$. If $du_x$ is not maximal rank, then $u^* \vol_L$ vanishes, while $\lambda > 0$, so the inequality~\eqref{eq:submersion-ineq} is satisfied and indeed is always a \emph{strict} inequality at such points.

Now consider $x \in M^0$ such that $du_x$ has maximal rank $k$. Let $e_1, \ldots, e_k$ be an oriented orthonormal basis of $ (\ker du_x)^{\perp}$ and $\tilde{e}_1, \ldots, \tilde{e}_{n-k}$ be an oriented orthonormal basis of $\ker du_x$. With our choice of orientations from Remark~\ref{rmk:isometry} we have $\vol_M = \tilde{e}_1 \wedge \cdots \wedge \tilde{e}_{n-k} \wedge e_1 \wedge \cdots \wedge e_k$. Then we have
\begin{align*}
& (\alpha \wedge u^* \vol_L) (\tilde{e}_1 \wedge \cdots \wedge \tilde{e}_{n-k} \wedge e_1 \wedge \cdots \wedge e_k) & & \\
& \qquad {} = \alpha (\tilde{e}_1 \wedge \cdots \wedge \tilde{e}_{n-k}) u^* \vol_L (e_1 \wedge \cdots \wedge e_k) & & \text{(by Lemma~\ref{lemma:pullbacks-are-horizontal})} \\
& \qquad {} \leq 1 \cdot \vol_L ((\Lambda^k du) (e_1 \wedge \cdots \wedge e_k)) & & \text{(because $\alpha$ is a calibration)} \\
& \qquad {} = |(\Lambda^k du) (e_1 \wedge \cdots \wedge e_k)| & & \\
& \qquad {} = |\Lambda^k du| \, | (e_1 \wedge \cdots \wedge e_k)| & & \\
& \qquad {} \leq \lambda^k & & \text{(by Proposition~\ref{prop:Hadamard})},
\end{align*}
which concludes the proof of~\eqref{eq:submersion-ineq}. 

Equality holds if and only if equality holds in the two inequalities of the above computation. If the second inequality above is an equality, then by Proposition~\ref{prop:Hadamard} we have $u^* g = \lambda^2 h^{(0,2)}$, so $u$ is weakly horizontally conformal. Let $x \in M^0$ and let $\tilde e_1, \ldots, \tilde e_{n-k}$ be an orthonormal frame for $\ker du_x$. If the first inequality above is an equality, then we see that we must have $\alpha (\tilde e_1 \wedge \cdots \wedge \tilde e_{n-k}) = 1$. That is, the smooth fibre $M^0 \cap u^{-1} \{u(x)\}$ is calibrated with respect to $\alpha$.
\end{proof}

\begin{defn} \label{defn:Smith-submersion}
If equality holds in~\eqref{eq:submersion-ineq}, we say that $u$ is a \textbf{Smith submersion} with respect to $\alpha$. That is, a Smith submersion with respect to $\alpha$ is a smooth map $u \colon (M^n, h) \to (L^k, g)$ such that
\begin{equation} \label{eq:Smith-submersion-eq}
\alpha \wedge u^* \vol_L = \frac{1}{(\sqrt{k})^k} |du|^k \vol_M, \qquad u^* g = \frac{1}{k} |du|^2 h^{(0,2)},
\end{equation}
at all points on $M$. [However, recall that the first equation automatically implies the second equation.] Note that, strictly speaking, a Smith submersion is only actually a submersion on the open subset $M^0 = \{ x \in M : du_x \neq 0 \}$ of $M$.
\end{defn}

Before we prove the Smith submersion energy inequality in Theorem~\ref{thm:Smith-submersion-energy-ineq} below, which is analogous to Theorem~\ref{thm:Smith-immersion-energy-ineq} for Smith immersions, we first show that in the Smith submersion case we can rewrite the equation in a useful alternative form.

\begin{lemma} \label{lemma:submersion-star-alpha}
Let $u \colon (M^n, h) \to (L^k, g)$ be weakly horizontally conformal with dilation $\lambda$. Let $\alpha \in \Omega^{n-k} (M)$ be a calibration, so $\star \alpha \in \Omega^k (M)$ is also a calibration by Proposition~\ref{prop:star-calibration}. At at point $x$ where $du_x \neq 0$, the following are equivalent:
\begin{enumerate}[(i)]
\item $u^* \vol_L = \lambda^k (\star \alpha)^{(0,k)}$,
\item $(\ker du)^{\perp}$ is calibrated with respect to $\star \alpha$,
\item $\ker du$ is calibrated with respect to $\alpha$.
\end{enumerate}
\end{lemma}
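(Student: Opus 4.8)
The plan is to work at the fixed point $x \in M^0$ where $du_x \neq 0$ and to reduce all three conditions to a single scalar equality by evaluating the relevant forms on adapted orthonormal bases. Concretely, I would choose an oriented orthonormal basis $\tilde e_1, \ldots, \tilde e_{n-k}$ of the vertical space $\ker du_x$ and an oriented orthonormal basis $e_1, \ldots, e_k$ of the horizontal space $(\ker du_x)^{\perp}$, with the orientation conventions of Remark~\ref{rmk:isometry}, so that $\vol_M = \tilde e_1 \wedge \cdots \wedge \tilde e_{n-k} \wedge e_1 \wedge \cdots \wedge e_k$. The key observation is that condition (iii) is just the scalar statement $\alpha(\tilde e_1 \wedge \cdots \wedge \tilde e_{n-k}) = 1$ and condition (ii) is $(\star \alpha)(e_1 \wedge \cdots \wedge e_k) = 1$; I will show these two scalars coincide, and then show that condition (i) holds if and only if that common scalar equals $1$.

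For (ii) $\Leftrightarrow$ (iii), I would establish the pointwise identity $(\star \alpha)(e_1 \wedge \cdots \wedge e_k) = \alpha(\tilde e_1 \wedge \cdots \wedge \tilde e_{n-k})$. This is essentially the computation already used in the proof of Proposition~\ref{prop:star-calibration}: using that $\star$ is an isometry and the metric identification of forms and multivectors, $g(\star \alpha, e_1 \wedge \cdots \wedge e_k) = g(\star^2 \alpha, \star(e_1 \wedge \cdots \wedge e_k))$, where $\star^2 = (-1)^{k(n-k)}$ on $(n-k)$-forms and $\star(e_1 \wedge \cdots \wedge e_k) = (-1)^{k(n-k)} \tilde e_1 \wedge \cdots \wedge \tilde e_{n-k}$ by the chosen orientation. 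The two signs cancel, giving the clean identity. Since both conditions assert that this common value equals $1$, they are equivalent.

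For (i) $\Leftrightarrow$ (ii), I would use that both $u^* \vol_L$ and $(\star \alpha)^{(0,k)}$ are of type $(0,k)$ (the former by Lemma~\ref{lemma:pullbacks-are-horizontal}), and that the space of $(0,k)$-forms at $x$ is one-dimensional, spanned by the horizontal covolume. Thus equality in (i) can be checked by evaluating on $e_1 \wedge \cdots \wedge e_k$. On the left, horizontal conformality $u^* g = \lambda^2 h^{(0,2)}$ makes $du(e_1), \ldots, du(e_k)$ an oriented orthogonal frame of $T_{u(x)} L$, each of length $\lambda$ (orientation preserving by Remark~\ref{rmk:isometry}), so $u^* \vol_L (e_1 \wedge \cdots \wedge e_k) = \lambda^k$. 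On the right, evaluating on purely horizontal vectors picks out exactly the $(0,k)$-component, so $\lambda^k (\star \alpha)^{(0,k)}(e_1 \wedge \cdots \wedge e_k) = \lambda^k (\star \alpha)(e_1 \wedge \cdots \wedge e_k)$. Since $\lambda > 0$ at $x$, equality in (i) holds if and only if $(\star \alpha)(e_1 \wedge \cdots \wedge e_k) = 1$, which is precisely (ii).

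The underlying calculations are genuinely routine; the only place demanding care is the orientation bookkeeping. I expect the main (minor) obstacle to be tracking the Hodge-star signs in the (ii) $\Leftrightarrow$ (iii) step and confirming that $du$ sends the oriented horizontal frame to a positively oriented frame of $T_{u(x)} L$, both of which are controlled by the orientation convention fixed in Remark~\ref{rmk:isometry}; once those signs are pinned down, the three-way equivalence closes immediately.
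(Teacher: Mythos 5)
Your proposal is correct and follows essentially the same route as the paper: both proofs fix adapted oriented orthonormal bases of $\ker du_x$ and $(\ker du_x)^{\perp}$, reduce (i) $\Leftrightarrow$ (ii) to evaluating the two type-$(0,k)$ forms on the horizontal frame (using $u^* g = \lambda^2 h^{(0,2)}$ to get $u^* \vol_L(e_1 \wedge \cdots \wedge e_k) = \lambda^k$), and prove (ii) $\Leftrightarrow$ (iii) via the isometry property of the Hodge star. The only cosmetic difference is that you compute the (ii) $\Leftrightarrow$ (iii) identity starting from the horizontal side, so the sign $(-1)^{k(n-k)}$ from $\star^2$ cancels against the sign in $\star(e_1 \wedge \cdots \wedge e_k)$, whereas the paper starts from the vertical side, where $\star(\tilde e_1 \wedge \cdots \wedge \tilde e_{n-k}) = e_1 \wedge \cdots \wedge e_k$ makes the identity sign-free.
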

\begin{proof}
\emph{(i)}$\iff$\emph{(ii)}. Let $e_1, \ldots, e_k$ be an oriented orthonormal basis of $(\ker du_x)^{\perp}$. Then since
$$ (\star \alpha) (e_1, \ldots, e_k) = (\star \alpha)^{(0,k)} (e_1, \ldots, e_k) \quad \text{and} \quad (u^* \vol_L) (e_1, \ldots, e_k) = \lambda^k, $$
we have that $u^* \vol_L = \lambda^k (\star \alpha)^{(0,k)}$ if and only if $(\star \alpha) (e_1, \ldots, e_k) = 1$ if and only if $(\ker du)^{\perp}$ is calibrated with respect to $\star \alpha$.

\emph{(ii)}$\iff$\emph{(iii)}. Let $\tilde{e}_1, \ldots, \tilde{e}_{n-k}$ be an oriented orthonormal basis of $\ker du_x$. Note that
$$ \vol_M = \tilde{e}_1 \wedge \cdots \wedge \tilde{e}_{n-k} \wedge e_1 \wedge \cdots \wedge e_k. $$
Thus we have
\begin{align*}
\alpha (\tilde{e}_1, \ldots, \tilde{e}_{n-k}) & = h (\alpha, \tilde{e}_1 \wedge \cdots \wedge \tilde{e}_{n-k}) \\
& = h (\star \alpha, \star (\tilde{e}_1 \wedge \cdots \wedge \tilde{e}_{n-k})) = h (\star \alpha, e_1 \wedge \cdots \wedge e_k) = (\star \alpha) (e_1, \ldots, e_k),
\end{align*}
and the result follows.
\end{proof}

\begin{cor} \label{cor:Smith-submersion-alternative-form}
Let $u \colon (M^n, h) \to (L^k, g)$ be a smooth surjection. Let $\lambda = \frac{|du|}{\sqrt{k}}$ and $\alpha \in \Omega^{n-k} (M)$ be a calibration. Then the following are equivalent:
\begin{enumerate}[(i)]
\item \, $u^* \vol_L = \lambda^k (\star \alpha)^{(0,k)}$ \, and \, $u^* g = \lambda^2 h^{(0,2)}$,
\item \, $\alpha \wedge u^* \vol_L = \lambda^k \vol_M$.
\end{enumerate}
\end{cor}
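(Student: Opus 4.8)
The plan is to deduce this purely formally from the equality characterization in Theorem~\ref{thm:submersion-ineq} together with the three equivalences in Lemma~\ref{lemma:submersion-star-alpha}. The guiding observation is that conditions (i) and (ii) \emph{share} the weak horizontal conformality equation $u^* g = \lambda^2 h^{(0,2)}$: in (i) it appears explicitly, while in (ii) it is one of the two bullet points furnished by Theorem~\ref{thm:submersion-ineq} as the content of equality in~\eqref{eq:submersion-ineq}. Once this is isolated, the remaining content of each condition is an equivalent reformulation of the statement that $\ker du$ is $\alpha$-calibrated.

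First I would dispose of the points where $du_x = 0$. There $\lambda = 0$ and $u^* \vol_L = 0$, so (ii) reads $0 = 0$ and both equations of (i) hold trivially (interpreting $\lambda^k (\star \alpha)^{(0,k)}$ as $0$ since $\lambda = 0$). Hence the equivalence is immediate at such points, and it suffices to argue on $M^0$.

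Now fix $x \in M^0$. For (i)$\Rightarrow$(ii): condition (i) gives that $u$ is weakly horizontally conformal at $x$, which is exactly the hypothesis needed to apply Lemma~\ref{lemma:submersion-star-alpha}; the second half of (i), namely $u^* \vol_L = \lambda^k (\star\alpha)^{(0,k)}$, is statement (i) of that lemma, so its implication (i)$\Rightarrow$(iii) shows $\ker du$ is calibrated with respect to $\alpha$. Thus both bullet points in Theorem~\ref{thm:submersion-ineq} hold, so equality obtains in~\eqref{eq:submersion-ineq}, which is precisely (ii). Conversely, for (ii)$\Rightarrow$(i): if $\alpha \wedge u^* \vol_L = \lambda^k \vol_M$ at $x$, then Theorem~\ref{thm:submersion-ineq} yields both $u^* g = \lambda^2 h^{(0,2)}$ (the second equation of (i), and again the hypothesis of the lemma) and that the fibre $\ker du$ is $\alpha$-calibrated. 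Feeding the latter into the implication (iii)$\Rightarrow$(i) of Lemma~\ref{lemma:submersion-star-alpha} produces $u^* \vol_L = \lambda^k (\star\alpha)^{(0,k)}$, completing (i).

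I do not anticipate any genuine obstacle: all the analytic and linear-algebraic content has already been extracted in Theorem~\ref{thm:submersion-ineq} and Lemma~\ref{lemma:submersion-star-alpha}, and the corollary is essentially a matter of matching their hypotheses and conclusions. The only point requiring minor care is the bookkeeping of the shared weak horizontal conformality equation and the verification that it holds in \emph{both} directions precisely so that Lemma~\ref{lemma:submersion-star-alpha} is applicable each time.
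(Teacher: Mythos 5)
Your proposal is correct and follows essentially the same route as the paper's own proof: dispose of the points where $du_x = 0$, then on $M^0$ combine the equality characterization of Theorem~\ref{thm:submersion-ineq} with the equivalences of Lemma~\ref{lemma:submersion-star-alpha} in each direction. The only cosmetic difference is that in (i)$\Rightarrow$(ii) the paper passes through statement \emph{(ii)} of the lemma ($(\ker du)^{\perp}$ calibrated by $\star\alpha$) while you pass through statement \emph{(iii)} ($\ker du$ calibrated by $\alpha$), which the lemma itself shows are interchangeable.
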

\begin{proof} Both equations are trivially satisfied at the points where $du$ is zero. Let $x \in M^0$.

Suppose that \emph{(i)} holds. By Lemma~\ref{lemma:submersion-star-alpha}, we have that $(\ker du)^{\perp}$ is calibrated with respect to $\star \alpha$. Combining this with $u^* g = \lambda^2 h^{(0,2)}$ and using Theorem~\ref{thm:submersion-ineq}, we obtain $\alpha \wedge u^* \vol_L = \lambda^k \vol_M$.

Conversely, suppose \emph{(ii)} holds. From Theorem~\ref{thm:submersion-ineq} we know that $u$ is horizontally conformal and $\alpha$ calibrates $ \ker du$.  Hence by Lemma~\ref{lemma:submersion-star-alpha} we also have $u^* \vol_L = \lambda^k (\star \alpha)^{(0,k)}$, so \emph{(i)} holds.
\end{proof}

\begin{rmk} \label{rmk:Smith-submersion-two-forms}
Corollary~\ref{cor:Smith-submersion-alternative-form} establishes two equivalent formulations of Smith submersion. The original definition of Smith submersion in~\eqref{eq:Smith-submersion-eq} is precisely \emph{(ii)} of Corollary~\ref{cor:Smith-submersion-alternative-form}, since the first equation in~\eqref{eq:Smith-submersion-eq} implies the second. However, in the alternative formulation \emph{(i)} of Corollary~\ref{cor:Smith-submersion-alternative-form}, we need \emph{both} equations. The first does not in general imply the second.

Moreover, the original definition in~\eqref{eq:Smith-submersion-eq} arises as the case of equality in the general inequality of~\eqref{eq:submersion-ineq}. Similarly, we can show that \emph{if we assume the second equation in (i) of Corollary~\ref{cor:Smith-submersion-alternative-form}}, then we claim that we always have the inequality
\begin{equation} \label{eq:Smith-alt-ineq}
u^* \vol_L \geq \lambda^k (\star \alpha)^{(0,k)}.
\end{equation}
However, the inequality~\eqref{eq:Smith-alt-ineq} need not hold in general if we do not assume $u^* g = \lambda^2 h^{(0,2)}$.

To see that~\eqref{eq:Smith-alt-ineq} holds if $u^* g = \lambda^2 h^{(0,2)}$, note that both sides are sections of the oriented line bundle $\Lambda^k (\ker(du)^{\perp})^*$ whose space of sections is $\Omega^{(0,k)}$. Hence we can compare any two elements. Clearly the inequality holds on $M \setminus M^{0}$ as both sides are zero. Let $x \in M^0$. Let $e_1, \ldots, e_k$ be an oriented orthonormal basis of $(\ker du_x)^{\perp}$. Then since $u^* g = \lambda^2 h^{(0,2)}$ we have
$$ u^* \vol_L (e_1 \wedge \cdots \wedge e_k) = \lambda^k, $$
and since $\star \alpha$ is also a calibration we have
$$ \lambda^k (\star \alpha)^{(0,k)} (e_1\wedge \cdots \wedge e_k) = \lambda^k (\star \alpha) (e_1 \wedge \cdots \wedge e_k) \leq \lambda^k. $$
Thus the inequality~\eqref{eq:Smith-alt-ineq} holds if $u^* g = \lambda^2 h^{(0,2)}$.\\
Finally, as in the immersion case, there is another equivalent form of the Smith equation, which we prove in Propositions~\ref{prop:Smith-submersion-direct-prelim} and~\ref{prop:Smith-submersion-direct-prelim2}.
\end{rmk}

\begin{thm}[Energy Inequality] \label{thm:Smith-submersion-energy-ineq}
Let $\alpha \in \Omega^{n-k} (M)$ be a \emph{closed} calibration. Let $u \colon (M^n, h) \to (L^k, g)$ be a Smith submersion with respect to $\alpha$. Suppose $M$ is compact. Then $u$ is $k$-harmonic in the sense that it is a critical point of $E_k$.
\end{thm}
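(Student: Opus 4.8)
The plan is to follow the strategy used for Theorem~\ref{thm:Smith-immersion-energy-ineq}, replacing the immersion pointwise bound~\eqref{eq:immersion-ineq} by its submersion analogue~\eqref{eq:submersion-ineq}. For an \emph{arbitrary} smooth surjection $u \colon (M^n, h) \to (L^k, g)$, I would set $\lambda = \tfrac{1}{\sqrt{k}} |du|$ and integrate~\eqref{eq:submersion-ineq} over the compact manifold $M$ to obtain
\begin{equation*}
E_k (u) = \frac{1}{(\sqrt{k})^k} \int_M |du|^k \vol_M = \int_M \lambda^k \vol_M \geq \int_M \alpha \wedge u^* \vol_L .
\end{equation*}
This first step is purely formal: rewrite the $k$-energy in terms of $\lambda$ and apply the pointwise inequality.

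The conceptual heart of the argument is to recognize the right-hand side $\int_M \alpha \wedge u^* \vol_L$ as a \emph{topological} quantity. Here I would use two facts: first, $\alpha$ is closed by hypothesis; second, $u^* \vol_L$ is automatically closed, since $\vol_L$ is a top-degree form on $L$, so $d(u^* \vol_L) = u^*(d \vol_L) = 0$. Hence $\alpha \wedge u^* \vol_L$ is a closed $n$-form on the compact oriented $M$, and by Stokes' theorem its integral depends only on the de Rham classes $[\alpha] \in H^{n-k}(M)$ and $[u^* \vol_L] = u^*[\vol_L] \in H^k(M)$; explicitly it equals the pairing $\langle [\alpha] \smile u^*[\vol_L], [M] \rangle$. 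Since pullback on de Rham cohomology is a homotopy invariant, this lower bound is unchanged under any smooth homotopy of $u$, so it depends only on $[\alpha]$ and the homotopy class of $u$. (One notes in passing that $L = u(M)$ is compact as the continuous image of the compact $M$, though this is not strictly needed for the de Rham argument.)

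Finally, by Theorem~\ref{thm:submersion-ineq}, equality holds in~\eqref{eq:submersion-ineq} at every point precisely when $u$ is a Smith submersion, so a Smith submersion $u_0$ attains the topological lower bound and therefore $E_k(u_0) \leq E_k(u)$ for every $u$ homotopic to $u_0$. Since any smooth variation $u_t$ of $u_0$ is in particular a homotopy, the function $t \mapsto E_k(u_t)$ has a minimum at $t = 0$, whence $\ddt E_k(u_t)|_{t=0} = 0$; as this holds for all variations, $u_0$ is a critical point of $E_k$, i.e.\ $k$-harmonic. I expect the only step requiring genuine care to be the topological invariance of the bound across the whole homotopy class — which is exactly where $d\alpha = 0$ (together with the automatic closedness of $u^*\vol_L$) enters — while the remaining steps transcribe the immersion case almost verbatim.
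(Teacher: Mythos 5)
Your proposal is correct and follows essentially the same route as the paper's own proof: integrate the pointwise inequality~\eqref{eq:submersion-ineq} over the compact $M$, recognize $\int_M \alpha \wedge u^* \vol_L$ as the topological pairing $([\alpha] \cup u^*[\vol_L]) \cdot [M]$ (using $d\alpha = 0$ together with the automatic closedness of $u^*\vol_L$), and conclude via Theorem~\ref{thm:submersion-ineq} that a Smith submersion attains this lower bound, hence minimizes $E_k$ in its homotopy class and is therefore a critical point. Your write-up merely makes explicit two steps the paper leaves implicit — the homotopy invariance of the bound and the passage from minimizer to critical point — so it is the same argument, slightly more detailed.
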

\begin{proof}
For any smooth map $u \colon (M^n, h) \to (L^k, g)$, let $\lambda = \frac{1}{\sqrt{k}} |du|$. Using~\eqref{eq:submersion-ineq} we have
\begin{align*}
E_k (u) & = \frac{1}{(\sqrt{k})^k}\int_M |du|^k \vol_M = \int_M \lambda^k \vol_M \geq \int_M \alpha \wedge u^* \vol_L = ( [\alpha] \cup u^*[\vol_L] ) \cdot [M],
\end{align*}
where we have used the fact that $\alpha$ is closed. Thus the $k$-energy of $u$ is bounded from below by a topological quantity, as it depends only on the cohomology class $[\alpha]$ and the homotopy class of $u$. Moreover, by Theorem~\ref{thm:submersion-ineq}, equality holds if and only if $u$ is a Smith submersion. This shows that such maps are local minimizers of $E_k$ and thus are $k$-harmonic.
\end{proof}

We note that Theorem~\ref{thm:Smith-submersion-energy-ineq} still holds if $M$ is noncompact. See Theorem~\ref{thm:Smith-submersion-direct-harmonic}.

\begin{rmk} \label{rmk:submersion-conf-inv}
Smith submersions also enjoy a sort of ``conformal invariance'', but it is slightly more complicated. (This is expected, because a Smith submersion $u \colon (M^n, h) \to (L^k, g)$ with respect to $\alpha \in \Omega^{n-k} (M)$ is in particular a $k$-harmonic map (when $d \alpha = 0$), so by the discussion at the end of Section~\ref{sec:p-harmonic}, this notion would depend only on the conformal class $[h]$ of the metric on the domain $M$ only in the particular special case $n=k$.

In general, if $n > k$, we have the following. Let $h = h^{(2,0)} + h^{(0,2)}$ be the decomposition of the metric $h$ on $M$ in terms of the horizontal/vertical splitting as in Definition~\ref{defn:horizontal/vertical}. A \emph{horizontally conformal scaling} of $h$ is a new metric $\wt h = h^{(2,0)} + f^2 h^{(0,2)}$ for some smooth positive function on $L$. (That is, we only conformally scale the \emph{horizontal} part of the metric $h$). Since $du$ is zero on vertical vectors, from $\eqref{eq:du-norm-v2}$ we get
$$ \wt \lambda^2 = \frac{1}{k} |du|_{\wt h, g}^2 = f^{-2} \frac{1}{k} |du|_{h, g}^2 = f^{-2} \lambda^2, $$
and clearly $\wt \vol_M = f^k \vol_M$. It follows that the Smith submersion equations $\alpha \wedge u^* \vol_L = \lambda^k \vol_M$ and $u^* g = \lambda^2 h^{(0,2)}$ are \emph{invariant under horizontally conformal scaling} of the domain metric $h$ on $M$.
\end{rmk}

\subsection{Direct proof that Smith submersions are $k$-harmonic} \label{sec:Smith-sub-direct}

In Theorem~\ref{thm:Smith-immersion-direct-harmonic} below we show directly that a Smith submersion satisfies the $k$-harmonic map equation, in the sense that $\tau_k (u) = 0$, without assuming $M$ is compact. First we need some preliminary results.

\begin{lemma} \label{lemma:Smith-sub-props}
Let $\alpha \in \Omega^{n-k}(M)$ be a calibration. Let $u \colon (M^n, h) \to (L^k, g)$ be a Smith submersion with respect to $\alpha$. Then we have
$$ (\star \alpha)^{(1,k-1)} = 0, \qquad \text{and} \qquad \text{$\nabla \alpha = 0$ on $ \ker du$}. $$
\end{lemma}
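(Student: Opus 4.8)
The plan is to handle the two assertions separately; in both, the \emph{first cousin principle} (Lemma~\ref{lemma:cousin}) does the essential work, and everything is understood on the open set $M^0$, where the horizontal/vertical splitting and hence $\ker du$ are defined.

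For the first assertion, I would begin by recording that $\star\alpha \in \Omega^k(M)$ is again a calibration by Proposition~\ref{prop:star-calibration}, and that for a Smith submersion the horizontal subbundle $(\ker du)^\perp$ is calibrated with respect to $\star\alpha$. The latter is condition \emph{(ii)} of Lemma~\ref{lemma:submersion-star-alpha}, which holds because $\ker du$ is $\alpha$-calibrated (Theorem~\ref{thm:submersion-ineq}) and \emph{(ii)}$\iff$\emph{(iii)} there. Fix $x \in M^0$, choose an orthonormal basis $e_1, \ldots, e_{k-1}$ of $(\ker du_x)^\perp$, and take any $w \in \ker du_x = ((\ker du_x)^\perp)^\perp$. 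Since the type-$(1,k-1)$ component is the only summand of $\star\alpha$ that survives when fed one vertical and $k-1$ horizontal arguments, we have $(\star\alpha)^{(1,k-1)}(w, e_1, \ldots, e_{k-1}) = (\star\alpha)(w, e_1, \ldots, e_{k-1})$, and this vanishes by Lemma~\ref{lemma:cousin} applied to the calibration $\star\alpha$ and the $\star\alpha$-calibrated subspace $(\ker du_x)^\perp$. By skew-symmetry and multilinearity, vanishing on such adapted arguments gives $(\star\alpha)^{(1,k-1)} = 0$.

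For the second assertion — which I read as $(\nabla_X\alpha)(\tilde e_1, \ldots, \tilde e_{n-k}) = 0$ for every $X \in \Gamma(TM)$ and all $\tilde e_i \in \ker du$ — the argument mirrors the proof of Lemma~\ref{lemma:Smith-immersion-direct-pre2}. At $x \in M^0$, extend an oriented orthonormal basis $\tilde e_1, \ldots, \tilde e_{n-k}$ of $\ker du_x$ to a local orthonormal frame of the (smooth) vertical subbundle $\ker du$, and expand
$$ (\nabla_X\alpha)(\tilde e_1, \ldots, \tilde e_{n-k}) = X\big(\alpha(\tilde e_1, \ldots, \tilde e_{n-k})\big) - \sum_{j=1}^{n-k} \alpha(\tilde e_1, \ldots, \nabla_X \tilde e_j, \ldots, \tilde e_{n-k}). $$
The first term vanishes because every nearby fibre is $\alpha$-calibrated, so $\alpha(\tilde e_1, \ldots, \tilde e_{n-k}) \equiv 1$ is constant. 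For each summand in the second term, only the component of $\nabla_X \tilde e_j$ lying in $(\ker du)^\perp$ can contribute: its $\tilde e_j$ component is zero since $\tilde e_j$ has unit length, and its $\tilde e_i$ components for $i \neq j$ are killed by skew-symmetry of $\alpha$. Replacing $\tilde e_j$ by this horizontal vector leaves $n-k-1$ orthonormal vectors in the $\alpha$-calibrated plane $\ker du_x$ together with a vector in its orthogonal complement, so Lemma~\ref{lemma:cousin} forces each such term to vanish.

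I expect both parts to be essentially routine consequences of the first cousin principle, so there is no single hard step. The one point requiring care is the first term of the second part, where I must justify that the locally extended vertical frame can be taken oriented orthonormal with $\alpha$ evaluating to the constant $+1$; this uses that \emph{every} smooth fibre of a Smith submersion, not merely the fibre through $x$, is $\alpha$-calibrated, which is precisely what allows me to differentiate a constant and conclude.
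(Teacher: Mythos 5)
Your proposal is correct and takes essentially the same approach as the paper: the first statement via the first cousin principle applied to the $\star\alpha$-calibrated horizontal space, and the second by expanding $\nabla_X\alpha$ on a vertical orthonormal frame, using constancy of $\alpha$ on the calibrated fibres for the derivative term and splitting each $\nabla_X\tilde e_j$ into vertical (linear dependence plus skew-symmetry) and horizontal parts, where the paper handles the horizontal part by citing its first statement $(\star\alpha)^{(1,k-1)}=0$ rather than re-applying Lemma~\ref{lemma:cousin} to $\alpha$ and $\ker du_x$ as you do --- these are equivalent via Lemma~\ref{lemma:star-pq}. One wording slip: your $e_1,\ldots,e_{k-1}$ are orthonormal vectors \emph{in} the $k$-dimensional space $(\ker du_x)^\perp$, not a basis of it.
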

\begin{proof}
The first statement follows from Lemma~\ref{lemma:cousin}, because by Corollary~\ref{cor:Smith-submersion-alternative-form} and Lemma~\ref{lemma:submersion-star-alpha}, the form $\star \alpha$ calibrates $(\ker du)^{\perp}$. For the second statement, since $\alpha \in \Omega^{n-k}(M)$ and $\ker du$ is $(n-k)$-dimensional, it is enough to show that
$$ (\nabla_X \alpha) (\tilde{e}_1 \wedge \ldots \wedge \tilde{e}_{n-k}) = 0, $$
for any local orthonormal frame $\tilde{e}_1, \ldots , \tilde{e}_{n-k}$ of $\ker du$. Since by Lemma~\ref{lemma:submersion-star-alpha}, $\alpha$ calibrates $\ker du$, we have that $\alpha (\tilde{e}_1 \wedge \cdots \wedge \tilde{e}_{n-k}) = 1$. Hence we have
\begin{align*}
(\nabla_X \alpha) (\tilde{e}_1 \wedge \cdots \wedge \tilde{e}_{n-k}) & = X \big(\alpha (\tilde{e}_1 \wedge \cdots \wedge \tilde{e}_{n-k}) \big) - \sum_{j=1}^{n-k} \alpha (\tilde{e}_1\wedge \cdots \wedge (\nabla_X \tilde{e}_j) \wedge \cdots \wedge \tilde{e}_{n-k}) \\
& = 0 - \sum_{j=1}^{n-k} \alpha (\tilde{e}_1 \wedge \cdots \wedge (\nabla_X \tilde{e}_j) \wedge \cdots \wedge \tilde{e}_{n-k}).
\end{align*}
Now for any fixed $j$, the term $\alpha (\tilde{e}_1 \wedge \cdots \wedge (\nabla_X \tilde{e}_j)^{(0,1)} \wedge \cdots \wedge \tilde{e}_{n-k})$ vanishes by the first statement. Next note that since the $\tilde{e}_j$ are of norm $1$, the vector field $\nabla_X \tilde{e}_j$ is always orthogonal to $\tilde e_j$, and thus $\tilde{e}_1, \ldots, (\nabla_X \tilde{e}_j)^{(1,0)}, \ldots, \tilde{e}_{n-k}$ are linearly dependent for any $j$, so $\alpha (\tilde{e}_1 \wedge \cdots \wedge (\nabla_X \tilde{e}_j)^{(1,0)} \wedge \cdots \wedge \tilde{e}_{n-k})$ also vanishes, which concludes the proof.
\end{proof}

\begin{prop} \label{prop:Smith-submersion-direct-prelim}
Let $u \colon (M^n, h) \to (L^k, g)$ be a Smith submersion with respect to the calibration form $\alpha \in \Omega^{n-k}$ on $M$. Then we have
\begin{equation} \label{eq:Smith-sub-P-version}
\star_L \Lambda^{k-1}(du) (\cdot \hk \star \alpha) = \frac{(-1)^{k-1}}{(\sqrt{k})^{k-2}}|du|^{k-2} du.
\end{equation}
\end{prop}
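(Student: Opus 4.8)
The plan is to mirror the proof of the analogous immersion result, Proposition~\ref{prop:Smith-immersion-direct-prelim}, while exploiting the structural facts already assembled for Smith submersions. Write $B = \star_L \circ \Lambda^{k-1}(du) \circ (\,\cdot \hk \star \alpha\,)$, a section of $T^*M \otimes u^* TL$ (here I silently use the metric identification $\Lambda^{k-1}(T^*M) \cong \Lambda^{k-1}(TM)$ so that $\Lambda^{k-1}(du)$ can be applied); the goal is to show $B = \frac{(-1)^{k-1}}{(\sqrt k)^{k-2}} |du|^{k-2} du$. Both sides vanish on $M \setminus M^0$, so I fix $x \in M^0$ and choose, as in Remark~\ref{rmk:isometry}, an oriented orthonormal basis $e_1, \ldots, e_k$ of $(\ker du_x)^{\perp}$ together with an oriented orthonormal basis $\tilde e_1, \ldots, \tilde e_{n-k}$ of $\ker du_x$. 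Since both $B$ and the right-hand side are linear in the input vector, it suffices to check equality when the input is a vertical vector and when it is each $e_m$.

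The key inputs are Lemma~\ref{lemma:Smith-sub-props}, which gives $(\star \alpha)^{(1,k-1)} = 0$, and the fact (Corollary~\ref{cor:Smith-submersion-alternative-form} with Lemma~\ref{lemma:submersion-star-alpha}) that $\star \alpha$ calibrates $(\ker du)^{\perp}$; the latter forces $(\star \alpha)^{(0,k)} = e^1 \wedge \cdots \wedge e^k$, the horizontal volume form. The type decomposition then does the work. For a vertical vector $v$, Lemma~\ref{lemma:hook-type} shows $v \hk (\star \alpha)^{(p,k-p)}$ has type $(p-1,k-p)$, which $\Lambda^{k-1}(du)$ annihilates unless it is purely horizontal, i.e. unless $p = 1$; but the $p=1$ term vanishes since $(\star\alpha)^{(1,k-1)}=0$, while the $p=0$ term vanishes because one cannot contract a vertical vector into a purely horizontal form. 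Hence $B(v) = 0$, matching the right-hand side since $du(v) = 0$.

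For the horizontal vectors $e_m$, the same count shows only the $(0,k)$-part of $\star \alpha$ survives the composition: by Lemma~\ref{lemma:hook-type} the form $e_m \hk (\star\alpha)^{(p,k-p)}$ has type $(p,k-p-1)$, which $\Lambda^{k-1}(du)$ kills unless $p = 0$. Thus the computation reduces to $\Lambda^{k-1}(du)\big(e_m \hk (e^1 \wedge \cdots \wedge e^k)\big) = (-1)^{m-1} du(e_1) \wedge \cdots \wedge \widehat{du(e_m)} \wedge \cdots \wedge du(e_k)$. By Remark~\ref{rmk:isometry} the vectors $f_i := \tfrac{1}{\lambda} du(e_i)$ form an oriented orthonormal basis of $T_{u(x)}L$, so this expression equals $(-1)^{m-1}\lambda^{k-1} f_1 \wedge \cdots \wedge \widehat{f_m} \wedge \cdots \wedge f_k$. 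Applying $\star_L$, with $\star_L(f_1 \wedge \cdots \wedge \widehat{f_m} \wedge \cdots \wedge f_k) = (-1)^{k+m} f_m$, gives $B(e_m) = (-1)^{k-1}\lambda^{k-1} f_m = (-1)^{k-1}\lambda^{k-2} du(e_m)$, and $\lambda^{k-2} = |du|^{k-2}/(\sqrt k)^{k-2}$ produces the claimed identity.

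The routine but error-prone part, and the one I would watch most carefully, is the sign bookkeeping across the two Hodge stars and the interior product. The conceptual heart, by contrast, is the single observation that the vanishing of $(\star\alpha)^{(1,k-1)}$ is exactly what collapses the contribution of $\star\alpha$ to its purely horizontal component $(\star\alpha)^{(0,k)}$; once that reduction is made, horizontal conformality turns everything into an orthonormal-frame computation on $L$.
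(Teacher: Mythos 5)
Your proof is correct, but it takes a genuinely different route from the paper's. The paper argues at the operator level: using the alternative form $u^* \vol_L = \lambda^k (\star \alpha)^{(0,k)}$ from Corollary~\ref{cor:Smith-submersion-alternative-form}, it pairs $\star_L (du(v_1) \wedge \cdots \wedge du(v_{k-1}))$ against $du(v_k)$ and uses surjectivity of $du_x$ to derive the intermediate identity $\star_L \Lambda^{k-1}(du) = \lambda^{k-2} \, du \circ P_{(0,k)}$, where $P_{(0,k)} = P_{(\star\alpha)^{(0,k)}}$; it then composes on the right with the adjoint $P_{(0,k)}^{\top}$, invoking Lemma~\ref{lemma:PP-tr} (which gives $P_{(0,k)} P_{(0,k)}^{\top} = \pi^{(0,1)}$ since $|(\star\alpha)^{(0,k)}| = 1$) and Proposition~\ref{prop:P-adjoint} (which converts $P_{(0,k)}^{\top}$ into $(-1)^{k-1} \,\cdot\, \hk (\star\alpha)^{(0,k)}$ and is where the sign originates); only at the very end does it use the type decomposition, Lemma~\ref{lemma:hook-type} and Lemma~\ref{lemma:Smith-sub-props}, to replace $(\star\alpha)^{(0,k)}$ by $\star\alpha$. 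You run the argument in the opposite order and at the level of an adapted frame: you make the type reduction the first step, observe that calibration of $(\ker du)^{\perp}$ by $\star\alpha$ pins down $(\star\alpha)^{(0,k)} = e^1 \wedge \cdots \wedge e^k$, and then verify the identity vector by vector, doing the Hodge-star sign bookkeeping by hand; your signs do check out, since $e_m \hk (e^1 \wedge \cdots \wedge e^k) = (-1)^{m-1} e_1 \wedge \cdots \wedge \widehat{e_m} \wedge \cdots \wedge e_k$ and $\star_L (f_1 \wedge \cdots \wedge \widehat{f_m} \wedge \cdots \wedge f_k) = (-1)^{k+m} f_m$ combine to the stated $(-1)^{k-1}$, and your appeal to Remark~\ref{rmk:isometry} correctly supplies the \emph{oriented} orthonormal frame $f_i = \tfrac{1}{\lambda} du(e_i)$ on $L$ that the $\star_L$ computation needs. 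What your approach buys is elementarity and transparency: it bypasses Proposition~\ref{prop:P-adjoint} and Lemma~\ref{lemma:PP-tr} entirely and isolates the three facts actually used (horizontal conformality, calibration of the horizontal space, and $(\star\alpha)^{(1,k-1)} = 0$). What the paper's approach buys is that the signs are packaged once and for all in the adjoint formula rather than recomputed, and the intermediate operator identity $\star_L \Lambda^{k-1}(du) = \lambda^{k-2}\, du \circ P_{(0,k)}$ is itself a useful structural statement; notably, your frame-evaluation style is essentially the one the paper itself deploys for the converse direction in Proposition~\ref{prop:Smith-submersion-direct-prelim2}, so the two arguments sit naturally side by side.
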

\begin{proof}
The equation is trivially satisfied at points where $du$ is zero. Let $x \in M^0$. Also, recall that we necessarily have $u^* g = \lambda^2 h^{(0,2)}$, and that from Corollary~\ref{cor:Smith-submersion-alternative-form} we also have $u^* \vol_L = \lambda^k (\star \alpha)^{(0,k)}$.

For simplicity of notation, let $P_{(0,k)}$ denote $P_{(\star \alpha)^{(0,k)}}$. Note that $P_{(0,k)} \in \Gamma(\Lambda^{(0,k-1)}(TM) \otimes (TM)^{(0,1)}))$. Using this, for any $v_1, \ldots, v_k \in T_x M$ we have
\begin{align*}
g (\star_L (du (v_1) \wedge \cdots \wedge du (v_{k-1}) ), du (v_{k}) ) & = \vol_L (du (v_1) \wedge \cdots \wedge du (v_{k})) \\
& = (u^*\vol_L) (v_1, \ldots, v_k) \\
& = \lambda^k (\star \alpha)^{(0,k)} (v_1, \ldots, v_k )\\
& = \lambda^k h(P_{(0,k)} (v_1, \ldots, v_{k-1}), v_k) \\
& = \lambda^k h^{(0,2)} (P_{(0,k)} (v_1, \ldots, v_{k-1}), v_k)  \\
& = \lambda^{k-2} (u^* g) (P_{(0,k)}( v_1, \ldots, v_{k-1}), v_k) \\
& = \lambda^{k-2} g (du (P_{(0,k)} (v_1, \ldots, v_{k-1})), d u(v_k) ).
\end{align*}
Since $du_x$ is surjective, we get
$$ \star_L (du (v_1) \wedge \cdots \wedge du (v_{k-1}) ) = \lambda^{k-2} du (P_{(0,k)} (v_1, \ldots, v_{k-1}) ) $$
or equivalently
\begin{equation} \label{eq:Smith-submersion-direct-temp1}
\star_L \Lambda^{k-1} (du) = \lambda^{k-2} du \circ P_{(0,k)} \quad \text{on $\Lambda^{k-1}(T_xM)$}.
\end{equation}
From the proof of Corollary~\ref{cor:Smith-submersion-alternative-form}, we had $| (\star \alpha)^{(0,k)}| = 1$. Combining this with Lemma~\ref{lemma:PP-tr} gives
\begin{equation} \label{eq:Smith-submersion-direct-temp2}
P_{(0,k)} P_{(0,k)}^{\top} = | (\star \alpha)^{(0,k)}|^2 \, \pi^{(0,1)} = \pi^{(0,1)}.
\end{equation}
Composing with $P_{(0,k)}^{\top}$ on the right of both sides of~\eqref{eq:Smith-submersion-direct-temp1} and using~\eqref{eq:Smith-submersion-direct-temp2} and Proposition~\ref{prop:P-adjoint}, since $du \circ \pi^{(0,1)} = du$, we obtain
$$ \star_L \Lambda^{k-1}(du) (\cdot \hk (\star \alpha)^{(0,k)}) = \frac{ (-1)^{k-1}}{(\sqrt{k})^{k-2}}|du|^{k-2} du. $$
Comparing the above with~\eqref{eq:Smith-sub-P-version}, we see that it remains to verify that
$$ \Lambda^{k-1}(du) (\cdot \hk (\star \alpha)^{(0,k)}) = \Lambda^{k-1}(du) (\cdot \hk \star \alpha). $$
To see this, we take any $v \in T_x M$ and compute
\begin{align*}
& \Lambda^{k-1} (du) (v \hk (\star \alpha)^{(0,k)}) & & \\
& \qquad {} = \Lambda^{k-1 }(du) ( (v^{(1,0)} +v^{(0,1)}) \hk (\star \alpha)^{(0,k)} ) & & \\
& \qquad {} = \Lambda^{k-1} (du) (v^{(0,1)} \hk (\star \alpha)^{(0,k)}) & & \text{(because $v^{(1,0)} \hk (\star \alpha)^{(0,k)} = 0$)} \\
& \qquad {} = \Lambda^{k-1} (du) (v^{(0,1)} \hk (\star \alpha)^{(0,k)} + v^{(1,0)} \hk (\star \alpha)^{(1,k-1)}) & & \text{(because $(\star \alpha)^{(1,k-1)} = 0$ by Lemma~\ref{lemma:Smith-sub-props})} \\
& \qquad {} = \Lambda^{k-1} (du) (v \hk \star \alpha)^{(0,k-1)} & & \text{(by Lemma~\ref{lemma:hook-type})} \\
& \qquad {} = \Lambda^{k-1} (du) (v \hk \star \alpha) & & \text{(because $du$ is zero on vertical vectors)},
\end{align*}
concluding the claim.
\end{proof}

\begin{prop} \label{prop:Smith-submersion-direct-prelim2}
We have shown that if $u \colon (M^n, h,\alpha) \to (L^k, g)$ is a Smith submersion, then
\begin{equation} \label{smith-subm-eq2}
\star_L \Lambda^{k-1} (du) (\cdot \hk \star \alpha) = (-1)^{k-1} \frac{|du|^{k-2}}{\sqrt{k}^{k-2}} du.
\end{equation}
The converse also holds. That is, if~\eqref{smith-subm-eq2} holds, then $u$ is a Smith submersion.
\end{prop}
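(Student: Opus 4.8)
The plan is to reverse the reasoning of Proposition~\ref{prop:Smith-submersion-direct-prelim}: I will show that~\eqref{smith-subm-eq2} forces equality in the pointwise inequality~\eqref{eq:submersion-ineq}, so that $u$ is a Smith submersion by the equality clause of Theorem~\ref{thm:submersion-ineq}. Where $du_x = 0$ both sides of~\eqref{smith-subm-eq2} vanish and $u$ is trivially a Smith submersion, so I fix $x$ with $du_x \neq 0$ and first verify that $du_x$ then has maximal rank $k$. If not, then $\im du_x$ has dimension $< k$, and since $\Lambda^{k-1}(du)$ takes values in $\Lambda^{k-1}(\im du_x)$, the vector $\star_L \Lambda^{k-1}(du)(v \hk \star\alpha)$ lies in $(\im du_x)^{\perp}$ for every $v$: it is zero when $\dim \im du_x \leq k-2$, and when $\dim \im du_x = k-1$ the Hodge star of the top multivector of that hyperplane is normal to it. But the right-hand side of~\eqref{smith-subm-eq2} lies in $\im du_x$, so equality forces $du_x = 0$, a contradiction. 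Hence $u$ is a submersion at $x$ and I may use the splitting $T_x M = \ker du_x \oplus_{\perp} (\ker du_x)^{\perp}$.

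Next I would fix an oriented orthonormal frame $e_1, \ldots, e_k$ of $(\ker du_x)^{\perp}$ (oriented as in Remark~\ref{rmk:isometry}) together with $\tilde e_1, \ldots, \tilde e_{n-k}$ of $\ker du_x$, evaluate both sides of~\eqref{smith-subm-eq2} on $e_a$, and pair with $du(e_b)$ using the metric. On the right this produces $(-1)^{k-1} \lambda^{k-2} g(du(e_a), du(e_b))$. On the left I use $g(\star_L \Xi, \eta) = \vol_L(\Xi \wedge \eta)$ together with two observations: $\Lambda^{k-1}(du)$ annihilates any multivector containing a vertical factor, so only the fully horizontal part $e_a \hk (\star\alpha)^{(0,k)}$ of $e_a \hk \star\alpha$ survives; and $u^* \vol_L$ is horizontal of type $(0,k)$ by Lemma~\ref{lemma:pullbacks-are-horizontal}. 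Writing $s := (\star\alpha)(e_1 \wedge \cdots \wedge e_k)$ and $J := (u^* \vol_L)(e_1 \wedge \cdots \wedge e_k)$, the elementary contraction $e_a \hk (\star\alpha)^{(0,k)} = s (-1)^{a-1} e_1 \wedge \cdots \wedge \widehat{e_a} \wedge \cdots \wedge e_k$, followed by applying $\Lambda^{k-1}(du)$, wedging with $du(e_b)$, and reordering, collapses the left-hand side to $\delta_{ab}(-1)^{k-1} s J$. Cancelling $(-1)^{k-1}$ yields the master relation
\[ s J \, \delta_{ab} = \lambda^{k-2} \, g(du(e_a), du(e_b)). \]

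From this relation the off-diagonal entries ($a \neq b$) give $g(du(e_a), du(e_b)) = 0$, while the diagonal entries show $|du(e_a)|^2 = s J / \lambda^{k-2}$ is independent of $a$. Summing over $a$ and using $\sum_a |du(e_a)|^2 = |du|^2 = k \lambda^2$ (since $du$ kills the vertical frame) forces $|du(e_a)|^2 = \lambda^2$ for every $a$, so $u^* g = \lambda^2 h^{(0,2)}$ and $u$ is horizontally conformal. Conformality makes $du(e_1), \ldots, du(e_k)$ an orthogonal frame of length $\lambda$, whence $J = |\Lambda^k du| = \lambda^k$ (positive with the Remark~\ref{rmk:isometry} orientation, consistently with Proposition~\ref{prop:Hadamard}). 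Feeding $J = \lambda^k$ back into the diagonal relation gives $s \lambda^k = \lambda^k$, i.e.\ $s = 1$, so $\star\alpha$ calibrates $(\ker du_x)^{\perp}$, equivalently (Lemma~\ref{lemma:submersion-star-alpha}) $\alpha$ calibrates $\ker du_x$. Both equality conditions of Theorem~\ref{thm:submersion-ineq} now hold, so $\alpha \wedge u^* \vol_L = \lambda^k \vol_M$ at $x$, and $u$ is a Smith submersion.

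I expect the main obstacle to be the type bookkeeping in the middle step: recognizing that only $e_a \hk (\star\alpha)^{(0,k)}$ and the horizontal part of $u^* \vol_L$ contribute, and tracking the signs in the interior-product and reordering computation. The key simplification — and the reason the converse is clean — is that by contracting against the \emph{horizontal} vectors $e_a$ I never need to know that $(\star\alpha)^{(1,k-1)} = 0$; here that vanishing is an output of the argument (via Lemma~\ref{lemma:Smith-sub-props}) rather than a hypothesis. The only other place requiring care is the maximal-rank argument, where the orthogonality of $\star_L \Lambda^{k-1}(du)(v \hk \star\alpha)$ to $\im du_x$ must be checked separately in the borderline case $\dim \im du_x = k-1$.
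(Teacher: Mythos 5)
Your proof is correct, and its core computation is the same one the paper uses: evaluate~\eqref{smith-subm-eq2} on a horizontal orthonormal frame, pair with $du(e_b)$, and extract the relation $\lambda^{k-2} g(du(e_a), du(e_b)) = \delta_{ab} \cdot (\text{scalar})$. (The paper reaches this via the identity $v \hk (w \wedge \beta) = g(v,w)\beta - w \wedge (v \hk \beta)$ together with $\Omega^{(0,k+1)} = 0$, rather than your explicit basis expansion of $e_a \hk (\star\alpha)^{(0,k)}$, and both proofs exploit the same observation that contraction against a \emph{horizontal} vector makes the component $(\star\alpha)^{(1,k-1)}$ irrelevant.) Where you genuinely diverge is in the treatment of the degenerate-rank possibility and in the endgame. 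You rule out $\rank(du_x) < k$ \emph{a priori}: the left side of~\eqref{smith-subm-eq2} is valued in $(\im du_x)^{\perp}$ (zero when $\dim \im du_x \leq k-2$, normal to the image hyperplane when $\dim \im du_x = k-1$) while the right side lies in $\im du_x$, forcing $du_x = 0$. This argument is correct, including the borderline case, but it appears nowhere in the paper. The paper instead never proves maximal rank separately: it runs the frame computation with $e_1, \ldots, e_m$ where $m \leq k$ is \emph{unknown}, sums the diagonal entries to get $\alpha \wedge u^* \vol_L = \frac{1}{m} \lambda^{k-2} |du|^2 \vol_M \geq \frac{1}{k} \lambda^{k-2} |du|^2 \vol_M = \lambda^k \vol_M$, and squeezes this against the reverse inequality of Theorem~\ref{thm:submersion-ineq}, so that maximal rank falls out for free. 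Correspondingly, the paper only ever invokes the \emph{inequality} of Theorem~\ref{thm:submersion-ineq}, whereas you verify its two equality \emph{conditions} explicitly (horizontal conformality, then $J = \lambda^k$ and $s = 1$) and invoke the equality clause. The trade-off: the paper's route is shorter and needs no case analysis on the rank; yours makes the geometric content explicit along the way -- conformality and calibratedness of the fibre are produced directly rather than deduced after the fact -- at the price of the extra rank lemma.
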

\begin{proof}
Let $x \in M$. If $du_x = 0$, which satisfies~\eqref{smith-subm-eq2} at $x$, then $u$ is a Smith submersion at $x$. Now assume $du_x \neq 0$. Let $e_1, \dots, e_m$ be an oriented orthonormal bases of $(\ker (du)_x)^{\perp}$. Note that a priori we do \emph{not} know that $m=k$. However, we have that $1 \leq m \leq k$. Let $i, j \in \{ 1, \ldots. m \}$.

We first observe that
\begin{align}
\Lambda^{k-1} (du) (e_i \hk \star \alpha) & = \Lambda^{k-1} (du) (e_i \hk \star \alpha)^{(0,k-1)} \nonumber \\
& = \Lambda^{k-1} (du) (e_i^{(0,1)} \hk \star \alpha)^{(0,k-1)} \quad \text{(because $e_i$ is already of type $(0,1)$)} \nonumber \\
& = \Lambda^{k-1} (du) (e_i^{(0,1)} \hk (\star \alpha)^{(0,k)}) \nonumber \\
& = \Lambda^{k-1} (du) (e_i \hk (\star \alpha)^{(0,k)}). \label{eqlastlinealt}
\end{align}
Evaluating both sides of~\eqref{smith-subm-eq2} on $e_i$, using~\eqref{eqlastlinealt}, and taking inner product with $du(e_j)$ we get
\begin{align*}
(-1)^{k-1} \lambda^{k-2} g(du(e_i), du(e_j))  &= g(\star_L \Lambda^{k-1} (du) (e_i \hk (\star \alpha)^{(0,k)}), du(e_j)) \\
&= u^{*} \vol_L ((e_i \hk (\star \alpha)^{(0,k)}) \wedge e_j) \\
&= u^{*} \vol_L (e_i \hk ((\star \alpha)^{(0,k)} \wedge e_j) - (-1)^k (\star \alpha)^{(0,k)} e_i \hk e_j) \\
&= u^{*} \vol_L (0 + (-1)^{k-1} \delta_{ij} (\star \alpha)^{(0,k)}) \quad \text{(because $\Omega^{(0,k+1)} = 0$)} \\
&= (-1)^{k-1} \delta_{ij} u^{*} \vol_L ((\star \alpha)^{(0,k)}) \\
&= (-1)^{k-1} \delta_{ij} u^{*} \vol_L (\star \alpha).
\end{align*}
Note that
$$ u^{*} \vol_L (\star \alpha) \, \vol_M = h(u^{*} \vol_L, \star \alpha) \, \vol_M = u^{*} \vol_L \wedge \star^2 \alpha = u^{*} \vol_L \wedge (-1)^{k(n-k)} \alpha = \alpha \wedge u^{*} \vol_L. $$
We deduce that
$$ \begin{cases} \quad \lambda^{k-2} g(du(e_i), du(e_j)) \, \vol_M = \alpha \wedge u^{*} \vol_L & \text{if $i=j$}, \\ \quad g(du(e_i), du(e_j)) = 0 & \text{if $i \neq j$}. \end{cases} $$
Using the above we compute
\begin{align*}
\alpha \wedge u^{*} \vol_L & = \frac{1}{m} \lambda^{k-2} \sum_i g(du(e_i),du(e_i)) \, \vol_M \\
& = \frac{1}{m} \lambda^{k-2} \sum_{i,j} g(du(e_i),du(e_j)) \, \vol_M \\
& = \frac{1}{m} \lambda^{k-2} |du|^2 \vol_M \\
& \geq \frac{1}{k} \lambda^{k-2} |du|^2 \vol_M \\
& = \lambda^{k} \vol_M.
\end{align*}
Combining with Theorem~\ref{thm:submersion-ineq} we get the desired equality, and thus $u$ is a Smith submersion in the sense of Definition~\ref{defn:Smith-submersion}.
\end{proof}

\begin{prop} \label{prop:Smith-immersion-direct-prelim2}
Let $P \in \Gamma(T^* M \otimes \Lambda^q (TM))$. Under the identification of vector fields with $1$-forms using the metric, assume that $P$ is totally skew-symmetric. Then $\Div(\Lambda^{q}(du)(P)) = \Lambda^{q}(du) (\Div(P))$.
\end{prop}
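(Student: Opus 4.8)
The plan is to verify the identity pointwise by expanding both sides in Riemannian normal coordinates and applying the Leibniz rule, exactly as in the computation of $\Div(B)$ in the proof of Theorem~\ref{thm:Smith-immersion-direct-harmonic}. First I would fix a point $x \in M$ and choose Riemannian normal coordinates $\frac{\partial}{\partial x^i}$ on $M$ centred at $x$ and $\frac{\partial}{\partial y^{\gamma}}$ on $L$ centred at $u(x)$, so that at $x$ all Christoffel symbols of both $M$ and $L$ vanish and every covariant derivative reduces to an ordinary partial derivative. Writing the skew-symmetric components of $P$ as $P_{a b_1 \cdots b_q}$, which are skew in all $q+1$ indices by hypothesis, the section $\Lambda^q(du)(P)$ has components
$$ (\Lambda^q(du)(P))_a{}^{\gamma_1 \cdots \gamma_q} = P_{a b_1 \cdots b_q} \frac{\partial u^{\gamma_1}}{\partial x^{b_1}} \cdots \frac{\partial u^{\gamma_q}}{\partial x^{b_q}}, $$
where the $T^* M$ index $a$ is the one contracted by the divergence, while the $u^* \Lambda^q(TL)$ indices $\gamma_\ell$ are carried by $du$. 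Note that the statement is purely formal: it uses only the skew-symmetry of $P$ and makes no reference to $u$ being a Smith submersion.

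Next I would compute $\Div(\Lambda^q(du)(P))^{\gamma_1 \cdots \gamma_q} = \nabla_a (\Lambda^q(du)(P))_a{}^{\gamma_1 \cdots \gamma_q}$ at $x$ and apply the product rule. This produces two kinds of terms. The term in which $\nabla_a$ falls on $P$ is $(\nabla_a P_{a b_1 \cdots b_q}) \frac{\partial u^{\gamma_1}}{\partial x^{b_1}} \cdots \frac{\partial u^{\gamma_q}}{\partial x^{b_q}} = (\Div P)_{b_1 \cdots b_q} \frac{\partial u^{\gamma_1}}{\partial x^{b_1}} \cdots \frac{\partial u^{\gamma_q}}{\partial x^{b_q}}$, which is precisely $\Lambda^q(du)(\Div P)^{\gamma_1 \cdots \gamma_q}$, the desired right-hand side. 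The remaining $q$ terms each contain a second covariant derivative of $u$, namely $P_{a b_1 \cdots b_q} (\nabla_a \partial_{b_\ell} u^{\gamma_\ell}) \prod_{m \neq \ell} \frac{\partial u^{\gamma_m}}{\partial x^{b_m}}$ for $\ell = 1, \ldots, q$.

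The crux is to show these remaining terms vanish, and this is exactly where the total skew-symmetry of $P$ is used. The factor $\nabla_a \partial_{b_\ell} u^{\gamma_\ell}$ is the Hessian (second fundamental form) $\nabla du$ of the map $u$, which is \emph{symmetric} in the pair of lower indices $(a, b_\ell)$ because the Levi-Civita connections on $M$ and on $L$ are torsion-free; in the chosen normal coordinates this is just the manifest symmetry $\partial_a \partial_{b_\ell} u^{\gamma_\ell} = \partial_{b_\ell} \partial_a u^{\gamma_\ell}$ at $x$. On the other hand $P_{a b_1 \cdots b_q}$ is skew-symmetric in $(a, b_\ell)$ by assumption, so the contraction of a symmetric object against a skew-symmetric object over this pair is zero, and each of the $q$ remaining terms vanishes --- precisely the analogue of the ``second term vanishes by (skew)-symmetry in $j, i_\ell$'' step in the proof of Theorem~\ref{thm:Smith-immersion-direct-harmonic}. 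Since $x$ was arbitrary this yields $\Div(\Lambda^q(du)(P)) = \Lambda^q(du)(\Div P)$ on all of $M$. The only point requiring care --- the main, though mild, obstacle --- is bookkeeping the pullback connection correctly so that the mixed derivative $\nabla_a \partial_{b_\ell} u^{\gamma_\ell}$ is genuinely the symmetric tensor $\nabla du$; choosing normal coordinates at $x$ and $u(x)$ simultaneously kills all connection coefficients at the base point and makes this transparent.
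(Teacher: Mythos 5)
Your proposal is correct and follows essentially the same route as the paper's proof: expand both sides in Riemannian normal coordinates centred at $x$ and $u(x)$, apply the product rule so that the term where the derivative hits $P$ yields $\Lambda^{q}(du)(\Div P)$, and kill the remaining terms by contracting the symmetric Hessian $\frac{\partial^2 u^{\gamma_\ell}}{\partial x^a \partial x^{b_\ell}}$ against the skew pair of indices of $P$. The only cosmetic difference is that the paper carries the $\frac{1}{q!}$ factor explicitly before discharging it via skew-symmetry, and restricts (unnecessarily) to $x \in M^0$, whereas your argument works verbatim at every point.
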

\begin{proof}
We trivially have equality at points where $du$ is zero. Let $x \in M^0$. Take Riemannian normal coordinates $\frac{\partial}{\partial x^i}, \frac{\partial}{\partial y^a}$ centred at $x$ and $u(x)$ respectively. For simplicity of notation, let
$$ A \coloneqq \Lambda^{q}(du) (P) \in \Gamma(T^* M \otimes \Lambda^{q}(TL)). $$
Expressing the components of $A$ and $P$ in terms of these normal coordinates at the point $x$, we compute
\begin{align*}
A^{v_1 \cdots v_{q}}_j & = \big( \Lambda^{q} (du) (P_j) \big)^{v_1 \cdots v_{q}} \\
& = \frac{1}{q!} P^{t_1 \cdots t_q}_j \left( \Lambda^{q}(du) \left( \frac{\partial}{\partial x^{t_1}} \wedge \cdots \wedge \frac{\partial}{\partial x^{t_q}} \right) \right)^{v_1 \cdots v_{q}} \\
& = \frac{1}{q!} P^{t_1 \cdots t_q}_j \left( \frac{\partial u^{s_1}}{\partial x^{t_1}} \frac{\partial}{\partial y^{s_1}} \wedge \cdots \wedge \frac{\partial u^{s_q}}{\partial x^{t_q}} \frac{\partial}{\partial y^{s_q}} \right)^{v_1 \cdots v_{q}} \\
& = P^{t_1 \cdots t_q}_j \frac{\partial u^{v_1}}{\partial x^{t_1}} \cdots \frac{\partial u^{v_q}}{\partial x^{t_q}} \qquad \text{(by skew-symmetry of $P$ in $t_1 \cdots t_q$)}.
\end{align*}
From this we obtain
\begin{align*}
(\Div A)^{v_1 \cdots v_{q}} & = (\nabla_j A)^{v_1 \cdots v_{q}}_j \\
& = (\nabla_j P)^{t_1 \cdots t_q}_j \frac{\partial u^{v_1}}{\partial x^{t_1}} \cdots \frac{\partial u^{v_{q}}}{\partial x^{t_{q}}} \\
& \qquad {} + P^{t_1 \cdots t_q}_j \sum_{\ell=1}^{q} \frac{\partial^2 u^{v_{\ell}}}{\partial x^j \partial x^{t_{\ell}}} \frac{\partial u^{v_1}}{\partial x^{t_1}} \cdots \widehat{\frac{\partial u^{v_{\ell}}}{\partial x^{t_{\ell}}}} \cdots \frac{\partial u^{v_{q}}}{\partial x^{t_{q}}},
\end{align*}
where the second term above is zero by symmetry in $j, t_{\ell}$ of $\frac{\partial^2 u^{v_{\ell}}}{\partial x^j \partial x^{t_{\ell}}}$ and skew-symmetry of $P^{t_1 \cdots t_q}_j$, by our assumption on $P$. But then the first term is just:
$$ \Lambda^{q}(du) (\Div(P))^{v_1 \cdots v_{q}}, $$
which completes the proof.
\end{proof}

\begin{thm} \label{thm:Smith-submersion-direct-harmonic}
Let $u \colon (M^n, h) \to (L^k, g)$ be a Smith submersion with respect to the calibration form $\alpha \in \Omega^{n-k}$. If $d \alpha = 0$, then $u$ is $k$-harmonic in the sense that $\tau_k (u) = 0$.
\end{thm}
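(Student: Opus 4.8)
The plan is to run the same kind of argument as in the immersion case (Theorem~\ref{thm:Smith-immersion-direct-harmonic}), but exploiting the structural simplification that here the calibration $\alpha$ (and hence $\star\alpha$) lives on the \emph{domain} $M$ rather than on the target. By Proposition~\ref{prop:Smith-submersion-direct-prelim} we have the pointwise identity
$$ \star_L \Lambda^{k-1}(du)(\cdot \hk \star\alpha) = \frac{(-1)^{k-1}}{(\sqrt{k})^{k-2}} |du|^{k-2} du $$
on all of $M$. Hence $\tau_k(u) = \Div(|du|^{k-2}du)$ vanishes if and only if $\Div\bigl(\star_L \Lambda^{k-1}(du)(\cdot \hk \star\alpha)\bigr) = 0$, and the whole problem reduces to computing this single divergence.

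First I would peel off the outer Hodge star $\star_L$. Since $\star_L$ is built from the Levi-Civita-parallel metric and volume form on $L$, it is parallel with respect to the pullback connection on $u^* TL$, and therefore commutes with $\nabla_j$ and hence with the divergence taken over the $M$-index:
$$ \Div\bigl(\star_L \Lambda^{k-1}(du)(\cdot \hk \star\alpha)\bigr) = \star_L \, \Div\bigl(\Lambda^{k-1}(du)(\cdot \hk \star\alpha)\bigr). $$
Next I would set $P \coloneqq (\cdot \hk \star\alpha) \in \Gamma(T^* M \otimes \Lambda^{k-1}(TM))$. Under the metric identification of vector fields with $1$-forms, $P$ is exactly the $k$-form $\star\alpha \in \Omega^{k}(M)$, which is totally skew-symmetric; therefore Proposition~\ref{prop:Smith-immersion-direct-prelim2} (with $q = k-1$) applies and gives
$$ \Div\bigl(\Lambda^{k-1}(du)(P)\bigr) = \Lambda^{k-1}(du)\bigl(\Div P\bigr). $$

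It then remains only to show that $\Div P = 0$. Because $P$ is, as a tensor, the $k$-form $\star\alpha$, its tensor divergence is (up to sign) the codifferential, $\Div(\star\alpha) = -\delta(\star\alpha)$. Since $\alpha$ is an $(n-k)$-form we have $\star\star\alpha = \pm\alpha$, so $\delta(\star\alpha) = \pm \star\, d\,\star(\star\alpha) = \pm \star\, d\alpha$, which vanishes precisely because $d\alpha = 0$ by hypothesis. Assembling the three displays then yields $\tau_k(u) = 0$. Note that the closedness hypothesis enters exactly at this identification $\Div(\star\alpha) = -\delta(\star\alpha) = \pm\star\, d\alpha$; this is the submersion analogue of Lemma~\ref{lemma:Smith-immersion-direct-pre2}, but it is cleaner here since $\star\alpha$ is a genuine form on the domain and no pullback of $\nabla\alpha$ is required. (Recall that the geometric input $(\star\alpha)^{(1,k-1)}=0$ from Lemma~\ref{lemma:Smith-sub-props} has already been absorbed into Proposition~\ref{prop:Smith-submersion-direct-prelim}.)

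I do not expect a serious obstacle. The only genuine care needed is, first, to confirm that $P$ really is totally skew-symmetric as a tensor so that Proposition~\ref{prop:Smith-immersion-direct-prelim2} is legitimately applicable, and second, to track the orientation and sign conventions so that the closed condition $d\alpha = 0$ is correctly converted into coclosedness of $\star\alpha$. The commutation of $\star_L$ with the divergence is routine once its parallelism is noted, so the entire proof is essentially a reduction of $\tau_k(u)=0$ to the elementary fact that a closed form has coclosed Hodge dual.
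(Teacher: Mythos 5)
Your proposal is correct and is essentially identical to the paper's own proof: the same reduction via Proposition~\ref{prop:Smith-submersion-direct-prelim}, the same commutation of $\star_L$ past the divergence, the same application of Proposition~\ref{prop:Smith-immersion-direct-prelim2} to the totally skew-symmetric tensor $\cdot \hk \star\alpha$, and the same final step identifying $\Div(\cdot \hk \star\alpha) = -d^*(\star\alpha) = \pm \star d\alpha = 0$. No gaps; even your remark that the geometric content of Lemma~\ref{lemma:Smith-sub-props} is already absorbed into Proposition~\ref{prop:Smith-submersion-direct-prelim} matches how the paper structures the argument.
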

\begin{proof}
By equation~\eqref{eq:p-tension} and Proposition~\ref{prop:Smith-submersion-direct-prelim}, we need to show that
$$ \Div (\star_L \Lambda^{k-1}(du) (\cdot \hk \star \alpha)) =0. $$
However, $\star_L$ commutes with $\nabla$. Moreover, the section $\cdot \hk \star \alpha \in \Gamma(T^* M \otimes \Lambda^{k-1} (T^* M))$ is totally skew-symmetric. Hence, by Proposition~\ref{prop:Smith-immersion-direct-prelim2}, it is enough to show that
$$ \Div(\cdot \hk \star \alpha) = 0. $$
But for any $\beta \in \Omega^q$ we have $\Div(\cdot \hk \beta) = -d^* \beta$, because
$$ \Div(\cdot \hk \beta)_{s_1 \cdots s_{q-1}} = \nabla_i ((\cdot \hk \beta)_i)_{s_1 \cdots s_{q-1}} = \nabla_i \beta_{i s_1 \cdots s_{q-1}} = -(d^* \beta)_{s_1 \cdots s_{q-1}}. $$
So if $d \alpha = 0$ then $\Div(\cdot \hk \star \alpha) = -d^* \star \alpha = 0$, which concludes the proof.
\end{proof}

\section{Discussion} \label{sec:discussion}

In this section we review analytic properties of Smith immersions, discuss examples of Smith immersions and Smith submersions, make some remarks on the relevance to the SYZ and GYZ conjectures of mirror symmetry involving calibrated fibrations, and present several questions for future study.

\subsection{Analytic results for Smith immersions} \label{sec:immersions-analysis}

Numerous analytic results for Smith immersions were proved in Cheng--Karigiannis--Madnick~\cite[Sections 4 and 5]{CKM1}. In that paper the authors assumed that the calibration form $\alpha \in \Omega^k (M)$ was associated to a vector cross product (VCP), but as we showed in Section~\ref{sec:Smith-immersions}, this assumption was not necessary. All the analytic results used the form~\eqref{eq:Smith-immersion-eq} of the Smith immersion equation. In this section we informally review these analytic results. (Note that when $k=2$ these analytic results concern $J$-holomorphic maps and are classical.) See~\cite{CKM1} for precise statements.

\textbf{Removable singularities.} If $u$ is a $C^1_{\mathrm{loc}}$ Smith immersion on a punctured open ball in $\R^k$ with finite $k$-energy, then $u$ extends to a $C^1$ Smith immersion across the puncture.

\textbf{Energy gap.} There exists a ``threshold energy'' $\eps_0 > 0$ such that every Smith immersion $u \colon S^{k} \to M$ with $k$-energy less than $\eps_0$ is \emph{constant}. (That is, any nontrivial solution has a \emph{minimum} $k$-energy.) This is used to show that there are only a finite number of ``bubbles''.

\textbf{Compactness modulo bubbling.} Let $W \subseteq L$ be open, and let $\{W_m\}_{m \in \N}$ an increasing sequence of open sets exhausting $W$, and $g_m$ a sequence of metrics on $W_m$ such that $g_m \to g$ in $C^{\infty}_{\mathrm{loc}}$ on $W$. Let $u_m \colon (W_m, [g_m]) \to (M, h)$ be a sequence of Smith immersions with \emph{uniformly bounded $k$-energy}.

Then there exists a Smith immersion $u_{\infty} \colon (W, g|_W) \to (M, h)$ and a (possibly empty) finite subset $\mathcal B = \{ x_1, \ldots, x_N \}$ of $L$ such that (after passing to a subsequence) the following three properties hold:
\begin{enumerate}[(a)]
\item $u_m \to u_{\infty}$ in $C^1_{\mathrm{loc}}$ on $W \setminus \mathcal B$ uniformly on compact subsets of $W \setminus \mathcal B$,
\item as Radon measures on $L$, we have $|du_m|^{k} \vol_{L} \to |du_{\infty}|^{k} \vol_{L} + \textstyle{\sum_{i=1}^N} c_i \delta(x_i)$, where $\delta(x_i)$ is a Dirac measure at $x_i$, and each $c_i \geq \frac{1}{2} \eps_0$, where $\eps_0$ is the ``threshold energy''. This says that the energy density can concentrate at points, where a minimum amount of energy is lost.
\item If the $u_m$ have uniformly bounded $p$-energy for some $p \in (k, \infty]$, then $\mathcal B = \varnothing$. (There is no bubbling.)
\end{enumerate}
(In practice we take $W = L$ or $L = S^k$ and $W = S^k \setminus \{ p^- \}$, where $p^-$ is the south pole. See~\cite[Remark 4.13]{CKM1} for details.)

This result can be applied to a sequence $u_m \colon L \to M$ of Smith immersions representing \emph{the same homology class in $H_{k} (M)$}, as they have a uniform $k$-energy bound. For each $x_i$, by rescaling about $x_i$ and using \emph{conformal invariance}, and reapplying this result, we obtain a ``bubbled off'' Smith immersion $\tilde{u}_{\infty, i} \colon S^{k} \to M$. This process stops after a \emph{finite number of iterations} due to the energy gap.

\textbf{No energy loss.} We have $\lim_{m \to \infty} E_{k} (u_m) = E_{k} (u_{\infty})  + \sum_{i} E_{k} (\tilde{u}_{\infty,i})$. This says that the limiting $k$-energy is the sum of the $k$-energy of $u_{\infty}$ plus the $k$-energy of each of the bubble maps.

\textbf{Zero neck length.} We have $u_{\infty} (x_i) = \tilde{u}_{\infty, i} (p^-)$, where $p^-$ is the south pole of $S^k$. This says that for $m > > 0$, then $u_m$ is \emph{homotopic} to the connect sum $u_{\infty} \# (\underset{i}{\#} \tilde{u}_{\infty, i})$.

It would of course be very interesting to establish analogous analytic results for Smith submersions. However, the \emph{conformal invariance} of Smith immersions, as detailed in Remark~\ref{rmk:immersion-conf-inv}, was used crucially to establish the above analytic results. By contrast, Remark~\ref{rmk:submersion-conf-inv} says that Smith submersions are only \emph{horizontally conformally invariant}. But perhaps this is indeed the right notion that is needed in this context. The authors plan to investigate this question further.

\subsection{Examples of Smith maps} \label{sec:examples}

In this section we discuss some examples of Smith maps.

\begin{ex} \label{ex:immersion}
Let $(M^n, h)$ be a Riemannian manifold equipped with a calibration form $\alpha \in \Omega^k (M)$. Let $\iota \colon L^k \to M^n$ be an immersion of an oriented manifold $L^k$ into $M$, and equip $L$ with the pullback metric $g = \iota^* h$, so that $\iota$ is a Riemannian immersion. Suppose that $\iota(L)$ is $\alpha$-calibrated, which means that $\iota^* \alpha = \vol_L$. Then $\iota$ is a Smith immersion with dilation $\lambda = 1$. Thus, any $\alpha$-calibrated submanifold gives rise to a Smith immersion, but the notion of Smith immersion is more general.

Indeed, if $f \colon (L, g) \to (L, g)$ is an orientation-preserving conformal diffeomorphism, then $u = \iota \circ f$ is also a Smith immersion, with the same image $u(L) = \iota(L)$, but $u$ need not be a Riemannian immersion.
\end{ex}

There are several examples of Smith submersions where the domain $(M^n, h)$ is noncompact, given by explicit \emph{cohomogeneity one} special holonomy metrics on total spaces $M^n$ of vector bundles over a base $L^k$, and equipped with a parallel calibration form $\alpha \in \Omega^k (M)$. These include the Bryant--Salamon examples~\cite{BS} of $\G$ and $\Spin{7}$ manifolds, and (very likely) also include the Stenzel examples~\cite{Stenzel} of Calabi--Yau metrics on $T^* S^m$. The Smith submersion is the projection map $u \colon M \to L$, and the fibres are $(n-k)$-dimensional submanifolds calibrated by $\star \alpha \in \Omega^{n-k} (M)$.

In these examples, we have $du \neq 0$ everywhere on $M$, so $M^0 = M$. (See the discussion in Section~\ref{sec:SYZ} for why we cannot expect this to happen if $M$ is compact.) We now discuss these examples in detail.

\begin{ex} \label{ex:BS-G21}
Consider the spinor bundle $M^7 = \spi(S^3)$ over the round $S^3$. There is a torsion-free $\G$-structure $\ph$ on $M^7$, with dual $4$-form $\ps = \star \ph$, inducing a metric $h$ which has holonomy $\G$. The projection $u \colon (M^7, h) \to (S^3, g)$ is a submersion. We claim that the map $u$ is a Smith submersion with respect to the calibration form $\alpha = \ps \in \Omega^4 (M)$.

To see this, we use the notation of~\cite[Section 3.1]{KL}. We have local vertical vector fields $\zeta_0, \zeta_1, \zeta_2, \zeta_3$ and horizontal vector fields $b_1, b_2, b_3$. The function $r \geq 0$ is the distance from the zero section in the fibres of $M$. Then it is known that for $c_0, c_1 > 0, \kappa > 0$ we have a torsion-free $G_2$ structure defined by
\begin{equation} \label{eq:BS-G21}
\ph = 3 \kappa(c_0 + c_1 r^2) u^* \vol_{S^3} + 4 c_1 (b_1 \wedge \Omega_1 + b_2 \wedge \Omega_2 + b_3 \wedge \Omega_3),
\end{equation}
where $\Omega_i$ are vertical $2$-forms and such that the induced metric is
$$ h = (3 \kappa)^{\frac{2}{3}} (c_0 + c_1 r^2)^{\frac{2}{3}} u^* g_{S^3} + 4 \Big( \frac{c_1^3} {3\kappa} \Big)^{\frac{1}{3}} (c_0 + c_1 r^2)^{-\frac{1}{3}} (\zeta^2_0 +\zeta^2_1 + \zeta^2_2 + \zeta^2_3). $$
Hence, we see that $h^{(0,2)} = (3 \kappa)^{\frac{2}{3}} (c_0 + c_1 r^2)^{\frac{2}{3}} u^* g_{S^3}$ which gives $u^* g_{S^3} = \lambda^2 h^{(0,2)}$ for
$$ \lambda = (3 \kappa)^{-\frac{1}{3}} (c_0 + c_1 r^2)^{-\frac{1}{3}}. $$
By Corollary~\ref{cor:Smith-submersion-alternative-form}, it remains to verify that $u^* \vol_{S^3} = \lambda^3 \ph^{(0,3)}$. But we immediately see from~\eqref{eq:BS-G21} that
$$ \ph^{(0,3)} = 3 \kappa(c_0 + c_1 r^2) u^* \vol_{S^3} = \lambda^{-3} u^* \vol_{S^3}, $$
which gives the desired equality.

Since the $\G$-structure is torsion-free, in particular we have that $d \ps = 0$. Consequently, the map $u \colon M \to S^3$ is $3$-harmonic and the fibres are calibrated by $\ps$. (That the fibres of this $\G$-manifold are coassociative submanifolds is of course well-known.)
\end{ex}

\begin{ex} \label{ex:BS-G22}
Consider the manifold $M^7 = \Lambda_{-}^2 (T^* X^4)$ of anti-self dual $2$-forms over $X$, where $X^4$ is either the round $S^4$ or the Fubini--Study $\C \PR^2$. There is a torsion-free $\G$-structure $\ph$ on $M^7$, with dual $4$-form $\ps = \star \ph$, inducing a metric $h$ which has holonomy $\G$. The projection $u \colon (M^7, h) \to (X^4, g)$ is a submersion. We claim that the map $u$ is a Smith submersion with respect to the calibration form $\alpha = \ph \in \Omega^3 (M)$.

To see this, we use the notation of~\cite[Section 4.1]{KM}. There exist positive functions $w$ and $v$ which depend only on the radial coordinate in the vertical fibres and satisfy certain differential equations such that we have a torsion-free $G_2$ structure given by
$$ \ph = v^3 \vol_{\cV} + w^2 v d \theta, $$
where $\vol_{\cV}$ is the volume form for the vertical part and $\theta$ is the canonical $2$-form on $\Lambda_{-}^2 (T^* X)$. The dual $4$-form can be expressed as
\begin{equation} \label{eq:BS-G22}
\ps = \ps^{(0,4)} + \ps^{(2,2)} \qquad \text{where \, $\ps^{(0,4)} = w^4 \, u^* \vol_X$},
\end{equation}
and the metric $h$ induced by $\ph$ is given by
$$ h = w^2 \, u^* g_{X} + v^2 g_{\cV}. $$
Hence, we see that $h^{(0,2)} = \lambda^{-2} u^* g_{X}$ for $\lambda = w^{-1}$. By Corollary~\ref{cor:Smith-submersion-alternative-form}, it remains to verify that $u^* \vol_{X} = \lambda^4 \ps^{(0,4)}$. But this is immediate from~\eqref{eq:BS-G22}.

Since the $\G$-structure is torsion-free, in particular we have that $d \ph = 0$. Consequently, the map $u \colon M \to X^4$ is $4$-harmonic and the fibres are calibrated by $\ph$. (That the fibres of this $\G$-manifold are associative submanifolds is of course well-known.)
\end{ex}

\begin{ex} \label{ex:BS-Spin7}
Consider the manifold $M^8 = \spi_- (S^4)$ of negative chirality spinors over the round $S^4$. There is a torsion-free $\Spin{7}$-structure $\Ph$ on $M^8$, inducing a metric $h$ which has holonomy $\Spin{7}$. The projection $u \colon (M^8, h) \to (S^4, g)$ is a submersion. We claim that the map $u$ is a Smith submersion with respect to the calibration form $\alpha = \Ph \in \Omega^4 (M)$.

To see this, we use the notation of~\cite[Section 4.2]{KM}. There exist positive functions $w$ and $v$ which depend only on the radial coordinate in the vertical fibres and satisfy certain differential equations such that we have a torsion-free $\Spin{7}$ structure given by
\begin{equation} \label{eq:BS-Spin7}
\Ph = w^4 \, u^* \vol_{S^4} + w^2 v^2 \beta + v^4 \vol_{\cV},
\end{equation}
where $\vol_{\cV}$ is the volume form on the vertical part and $\beta$ is some $(2,2)$-form. The metric $h$ induced by $\Ph$ is given by
$$ h = w^2 \, u^* g_{S^4} + v^2 g_{\cV}. $$
Hence, we see that $h^{(0,2)} = \lambda^{-2} u^* g_{S^4}$ for $\lambda = w^{-1}$. By Corollary~\ref{cor:Smith-submersion-alternative-form}, it remains to verify that $u^* \vol_{S^4} = \lambda^4 \Ph^{(0,4)}$. But this is immediate from~\eqref{eq:BS-Spin7}.

Since the $\Spin{7}$-structure is torsion-free, in particular we have that $d \Ph = 0$. Consequently, the map $u \colon M \to S^4$ is $4$-harmonic and the fibres are calibrated by $\Ph$. (That the fibres of this $\Spin{7}$-manifold are Cayley submanifolds is of course well-known.)
\end{ex}

\begin{ex} \label{ex:Stenzel}
There is an explicit cohomogeneity one Calabi--Yau metric $h$ on the total space of $M^{2m} = T^* (S^m)$, called the \emph{Stenzel} metric. When $m=2$ this is the classical Eguchi--Hanson metric, and when $m=3$ it is the Candelas--de la Ossa \emph{conifold metric}. (See the paper of Ionel--Min-Oo~\cite{IM} for a concrete simple description of these metrics.) Being Calabi--Yau, this Riemannian manifold $(M^{2m}, h)$ is equipped with a holomorphic complex volume form $\Upsilon \in \Omega^{(m,0)} (M)$ such that $\alpha = \real (\Upsilon) \in \Omega^m (M)$ is a special Lagrangian calibration.

Let $u \colon M^{2m} \to S^m$ be the projection. The fibres of $u$ are special Lagrangian submanifolds. It seems very likely that $u$ is a Smith submersion, so that it is horizontally conformal and an $m$-harmonic map. The authors did not explicitly verify this. At least when $m=4$, such a verification should be possible using the many useful explicit formulas in Papoulias~\cite{Papoulias}.
\end{ex}

It would be interesting to examine if other known calibrated fibrations can be described by Smith submersions. For example, Goldstein exhibits a special Lagrangian torus fibration on the Borcea--Voisin manifold in~\cite{Gold1} and other special Lagrangian fibrations in noncompact Calabi--Yau manifolds with symmetry are discussed by Gross~\cite{Gross} and Goldstein~\cite{Gold2}.

Moreover, Karigiannis--Lotay~\cite{KL} exhibit other coassociative fibrations on the Bryant--Salamon $\G$-manifold $\Lambda^2_- (S^4)$, very different from the obvious one in Example~\ref{ex:BS-G22}, and Trinca~\cite{Trinca} similarly exhibits a nontrivial Cayley fibration on the Bryant--Salamon $\Spin{7}$-manifold $\spi_- (S^4)$, very different from the obvious one in Example~\ref{ex:BS-Spin7}. Attempting to verify if these fibrations can be described by a Smith submersion seems to be an interesting but difficult problem.

\subsection{Calibrated fibrations and the SYZ and GYZ ``conjectures''} \label{sec:SYZ}

In this section we briefly discuss the potential relevance of Smith submersions to the Strominger--Yau--Zaslow~\cite{SYZ} ``conjecture'' in Calabi--Yau geometry, as well as to the analogous Gukov--Yau--Zaslow ``conjecture'' in $\G$ geometry. The authors are certainly not experts on the mathematics involved here, and we know even less about the physics. Nevertheless, we feel it worthwhile to make a few remarks. We put ``conjecture'' in quotes in both cases, as these ideas are predominantly motivated by physics, and their precise mathematical formulations are constantly evolving. Our brief discussion here is far from exhaustive, and is only meant to pique the reader's interest for further inquiry.

Roughly speaking, Strominger--Yau--Zaslow argue in~\cite{SYZ} that one should expect (at least for certain types of points near the boundary of the moduli space) that a compact Calabi--Yau complex $3$-fold should admit a fibration over a real $3$-dimensional base, necessarily with singular fibres. The generic (smooth) fibre should be a special Lagrangian torus. The mathematical inspiration comes from the deformation theory of McLean~\cite{McLean}, which shows that a compact special Lagrangian $3$-manifold $L^3$ in a Calabi--Yau $6$-manifold locally smoothly deforms in a family of dimension $b^1 (L^3)$. One then expects to construct the ``mirror Calabi--Yau manifold'' by dualizing smooth fibres and then somehow compactifying.

Similarly, Gukov--Yau--Zaslow explain in~\cite{GYZ} that, again under certain conditions, a compact torsion-free $\G$-manifold should admit a fibration over a $3$-dimensional base, again with singular fibres. The generic (smooth) fibre should be a coassocative submanifold with is topologically either $T^4$ or $K3$. Again, this is inspired by McLean's result in~\cite{McLean} that a compact coassociative $4$-manifold $L^4$ in a torsion-free $\G$-manifold locally smoothly deforms in a family of dimension $b^2_+ (L^4)$, modulo orientations.

A kew observation by Joyce~\cite{Joyce-SYZ}, discussed also in~\cite[Chapter 9]{Joyce-book}, is that special Lagrangian fibrations of compact Calabi--Yau manifolds should not be expected to be smooth generically. Rather, Joyce provides evidence that they should be \emph{piecewise-smooth}, with the singularities of the map being related to topology change of the fibres. This suggests that the set of critical fibres should be relatively large. Indeed, Joyce argues that singular fibres should generically be of codimension one. It is reasonable to believe that analogous statements should hold for coassociative fibrations of compact torsion-free $\G$-manifolds. (Baraglia~\cite{Baraglia} gives a rigorous intricate argument proving that such coassociative fibrations necessarily must have singular fibres.)

When the domain $(M, h)$ of a Smith submersion is noncompact, there exist many explicit examples of calibrated fibrations, and at least some are definitely Smith submersions, as discussed in Section~\ref{sec:examples}. However, if $(M, h)$ is compact, then we expect that there must necessarily exist singular fibres. It would be interesting to see this directly by studying the PDE~\eqref{eq:Smith-submersion-eq} satisfied by a Smith submersion.

More generally, it is crucially important to understand the size of the \emph{critical set}
$$ M^c = M \setminus M^0 = \{ x \in M : du_x = 0 \} $$
of a Smith submersion. Similarly, the critical set $L^c = L \setminus L^0 = \{ x \in L : du_x = 0 \}$ of a Smith immersion is still very mysterious. In the classical case, when $(M, h)$ is an almost K\"ahler manifold equipped with the K\"ahler calibration form $\alpha = \omega \in \Omega^2 (M)$, then a Smith immersion $u \colon (L^2, g) \to (M, h)$ with respect to $\omega$ is a $J$-holomorphic map. In this case, when $L$ is compact it is known, by methods of \emph{unique continuation}, that the critical set $L^c$ is a finite set of points. (See McDuff--Salamon~\cite[Sections 2.3--2.4]{MS} for details.) It is an important open problem to see if such methods can in any way be effectively applied to general Smith immersions and Smith submersions. Of course, we certainly do not expect the critical sets to be of dimension zero in general.

\subsection{Questions for future study} \label{sec:future}

Many questions arise naturally from our study, which are somewhat speculative. Some of these are:

\textbf{Deformation theory of Smith maps.} What is the deformation theory of a Smith map (immersion or submersion)? From Example~\ref{ex:immersion}, any calibrated submanifold gives rise to a Smith immersion. The work of McLean~\cite{McLean} studies the deformation theory of (compact) calibrated submanifolds. Interestingly, there are two kinds of behaviours. Special Lagrangian and coassociative submanifolds deform smoothly, while complex, associative, and Cayley submanifolds in general have obstructed deformations. (The second class are essentially those calibrated submanifolds whose calibration forms are associated to vector cross products, except for higher dimensional complex submanifolds.)

However, at first glance, the Smith submersion equation does not seem to see the difference between those calibrations which have smooth deformation theories and those which are obstructed (respectively called branes and instantons by Leung--Lee~\cite{LL}). Thus, it is important to reconcile the distinction in McLean's deformation theories with the existence theory of Smith submersions. For example, if the domain $(M, h)$ is compact, so that the smooth fibres of a Smith submersion are compact calibrated submanifolds, and if $\alpha$ is an associative or Cayley calibration, then we should not in general expect existence of Smith submersions with respect to $\alpha$, because associative and Cayley submanifolds are in general obstructed. (Of course, examples \emph{do occur}, such as the obvious projections from a $7$-torus or $8$-torus with their standard $\G$ or $\Spin{7}$-structures.)

It would be interesting to see if the deformation theory of Smith immersions is ``better behaved''. Note that we aways have the freedom of precomposing by an orientation-preserving conformal diffeomorphism. Such deformations should be considered in some sense trivial. We are interested in deformations of Smith immersions which are transverse to such trivial deformations. For example, start with a (compact) associative or Cayley submanifold, and describe it by a Smith immersion. Can we always deform it (nontrivially) \emph{as a Smith immersion}? This would give a class of calibrated submanifolds with a particular type of allowed singularities which nevertheless have smooth deformation spaces.

\textbf{Stability.} We have seen from the energy inequalities that Smith immersions and Smith submersions are global minimizers of the $k$-energy in a particular class of maps. Suppose that $u$ is a $k$-harmonic map, which is \emph{stable} in the sense that the second variation of the $k$-energy at $u$ is nonnegative, so $u$ is a local minimum of the $k$-energy. Under what additional assumptions on the geometry of the source and target could we ensure that such a stable $k$-harmonic map is necessarily a Smith map? The classical example of such a stability theorem is the demonstration by Siu--Yau~\cite{SY} that a stable harmonic map from $S^2 = \C \PR^1$ into a compact K\"ahler manifold $(M, h, \omega)$ with positive holomorphic bisectional curvature is necessarily $\pm$-holomorphic. Generalizing such a result should involve finding analogues of ``positive holomorphic bisectional curvature'' in Riemannian manifolds with special holonomy.

\textbf{Constructing Smith maps via flows.} If a general stability theorem as described in the previous paragraph could be established, then one could use this to attempt to construct examples of Smith immersions or Smith submersions by running the $k$-harmonic map heat flow. This is the negative gradient flow of the $k$-energy. One would have to show that (under certain assumptions on the geometries of the source and target) that the flow exists for all time and converges to a $k$-harmonic map. Then one would hope to argue that the limit must in fact be a Smith map.

\addcontentsline{toc}{section}{References}

\bibliographystyle{plain}
\bibliography{smith-submersions.bib}

\end{document}